\titleformat{\section}[hang]
{\normalfont\Large\bfseries}
{\thesection.}{0.5em}{}
\titlespacing*{\section}{0pc}{2pc}{0.25pc}
\titleformat{\subsection}[runin]
{\normalfont\large\bfseries}
{\thesubsection}{0.5em}{}
\titlespacing{\subsection}{0pc}{1.5pc}{0.5pc}
\newcommand{\N}{\mathbb{N}}
\newcommand{\Q}{\mathbb{Q}}
\newcommand{\R}{\mathbb{R}}
\newcommand{\C}{\mathbb{C}}
\renewcommand{\H}{\mathcal{H}}
\newcommand{\J}{\mathscr{J}}
\newcommand{\HS}{\text{HS}}
\newcommand{\1}{\mathds{1}}
\newcommand{\Tr}{\text{Tr}}
\newcommand{\tr}{\text{tr}}
\newcommand{\abs}[1]{\left|#1\right|}
\newcommand{\norm}[1]{\left\|#1\right\|}
\newcommand{\set}[1]{\left\{#1\right\}}
\newcommand{\paren}[1]{\left(#1\right)}
\newcommand{\ang}[1]{\left\langle#1\right\rangle}
\newcommand{\Der}{\mathsf{Der}}
\newcommand{\InnDer}{\mathsf{InnDer}}
\DeclareMathOperator{\ev}{ev}
\newcommand{\II}{\rm{II}}
\newcommand{\<}{\left\langle}
\renewcommand{\>}{\right\rangle}
\newcommand{\dom}{\text{dom}}
\newcommand{\mc}[1]{\mathcal{#1}}
\newcommand{\mmop}{{M\bar\otimes M^\circ}}
\newcommand{\ttop}{{\tau\otimes\tau^\circ}}
\newcommand{\aaop}{{A\otimes A^\circ}}
\newcommand{\btxbtxop}{B\ang{T_X}\otimes B\ang{T_X}^\circ}
\definecolor{ggreen}{HTML}{7FDD99}
\definecolor{obnoxious}{HTML}{662288}
\newtheorem{thm}{Theorem}[section]
\newtheorem{prop}[thm]{Proposition}
\newtheorem{lem}[thm]{Lemma}
\newtheorem*{lem*}{Lemma}
\newtheorem{cor}[thm]{Corollary}
\theoremstyle{definition}
\newtheorem{defi}[thm]{Definition}
\newtheorem{ex}[thm]{Example}
\newtheorem{rem}[thm]{Remark}
\title{\textbf{On Free Stein Dimension}}
\author{Ian Charlesworth$^\circ$}
\address{$^\circ$Department of Mathematics, KU Leuven, Belgium \hfill \url{ian.charlesworth@kuleuven.be}}
\author{Brent Nelson$^\bullet$}
\address{$^\bullet$Department of Mathematics, Michigan State University \hfill \url{brent@math.msu.edu}}
\date{}
\begin{document}

\maketitle

\begin{abstract}
We establish several properties of the free Stein dimension, an invariant for finitely generated unital tracial $*$-algebras.
We give formulas for its behaviour under direct sums and tensor products with finite dimensional algebras
. Among a given set of generators, we show that (approximate) algebraic relations produce (non-approximate) bounds on the free Stein dimension.
Particular treatment is given to the case of separable abelian von Neumann algebras, where we show that free Stein dimension is a von Neumann algebra invariant.
In addition, we show that under mild assumptions $L^2$-rigidity implies free Stein dimension one.
Finally, we use limits superior/inferior to extend the free Stein dimension to a von Neumann algebra invariant---which is substantially more difficult to compute in general---and compute it in several cases of interest.
%
\end{abstract}



\section*{Introduction}

Given a tuple $X=(x_1,\ldots, x_n)$ of operators in a tracial von Neumann algebra $(M,\tau)$, the quantitative invariants associated to the distribution of $X$ with respect to $\tau$ are legion: the free entropy dimensions $\delta(X)$, $\delta_0(X)$, $\delta^*(X)$, and $\delta^\star(X)$; the free Fisher information $\Phi^*(X)$; the 1-bounded entropy $h(X)$; and so on. The behavior of these invariants often provides insights into the structure of the von Neumann algebra generated by this tuple. In \cite{CN21}, we defined new invariants called the \emph{free Stein irregularity} $\Sigma^*(X)$ and the \emph{free Stein dimension} $\sigma(X)$, whose primary advantage was being computable for a large number of examples of interest in free probability and operator algebras.

The notion of free Stein irregularity was motivated by the following observation. If $\J_X$ is the free Jacobian (see Section~\ref{subsec:derivation_spaces}), then $\Phi^*(X)<\infty$ if and only if $\1\in \dom(\J_X^*)$ where
    \[
        \1 := \left(\begin{array}{ccc} 1\otimes 1 & & 0 \\ &\ddots & \\ 0 & & 1\otimes 1 \end{array}\right),
    \]
and one views $\J_X\colon L^2(M,\tau)^n \to M_n(L^2(M\bar\otimes M^{\circ},\tau\otimes \tau^\circ)$ as a densely defined operator (see \cite[Definition 6.1]{VoiV}). The free Stein irregularity $\Sigma^*(X)$ is defined as the distance between $\1$ and $\dom(\J_X^*)$, and so $\Phi^*(X)<\infty$ implies $\Sigma^*(X)=0$. However, the converse is not true: it may be that $\1$ lies in $\overline{\dom(\J_X^*)}\setminus \dom(\J_X^*)$. Thus ``zero free Stein irregularity'' is a weakening of finite free Fisher information, and the implications of this weaker condition were explored in \cite{CN21}.

The free Stein dimension is given by $\sigma(X):=n - \Sigma^*(X)^2$, and was shown in \cite[Theorem 2.11]{CN21} to be $n$ times the von Neumann dimension of $\overline{\dom(\J_X^*)}$ as a right $M_n(M\bar\otimes M^\circ)$-module. It was also shown to be bounded above by the \emph{non-microstates} free entropy dimension $\delta^*(X)$ (see \cite[Corollary 4.4]{CN21}). Moreover, it was shown that $\sigma(X)$ is a unital $*$-algebra invariant: it depends only on $\tau$ and $\C\<X\>$---the unital $*$-algebra generated by $x_1,\ldots, x_n$---rather than the tuple $X$ itself (see \cite[Theorem 3.2]{CN21}). One might hope to push this result further and show the free Stein dimension is actually a \emph{von Neumann algebra} invariant, but the ambitious reader should note that this would resolve the free group factor isomorphism problem: if $X$ is a free semicircular family then $\Phi^*(X)<\infty$ and so $\Sigma^*(X)=0$, implying $\sigma(X)=n$.

In the present paper we refine the notion of the free Stein dimension and establish several new formulas with applications to yet more examples. In a shift from \cite{CN21}, we emphasize a perspective on the free Stein dimension that was essential for showing its algebraic invariance: $\overline{\dom(\J_X^*)}$ is isomorphic as a right module to a certain subspace of derivations on $\C\<X\>$ and valued in $L^2(M\bar\otimes M^\circ,\tau\otimes \tau^\circ)$ (see \cite[Lemma 3.1]{CN21}). Focusing on the unital $*$-algebra $\C\<X\>$ rather than the particular tuple $X$ has a number of algebraic advantages which we reap below.
In particular, we will work with the free Stein dimension of an algebra $A$ with a trace $\tau$ below---rather than that of a tuple of variables with a distribution---and so use $\sigma(A, \tau)$ rather than $\sigma(X)$, or more generally $\sigma(B\subset A,\tau)$ for derivations vanishing on a subalgebra $B$; see Definition~\ref{def:fsd}.

After some preliminaries in Section~\ref{sec:prelims}, we prove in Section~\ref{sec:direct_sum_and_amplification_formulae} that the free Stein dimension satisfies certain direct sum and tensor products formulas (see Theorem~\ref{thm:direct_sum_formula} and Corollary~\ref{cor:tensor_against_finite_dimensional_formula}). A key intermediate result is that one can always decompose $\sigma(A,\tau) = \sigma(B\subset A,\tau)+ \sigma(B,\tau)$ for any finite dimensional $*$-subalgebra $B\subset A$ (see Theorem~\ref{thm:excising_a_finite_dimensional_subalgebra}). This result also allows us to compute $\sigma(A,\tau)=t$ for a particular $A$ generating an interpolated free group factor $L(\mathbb{F}_t)$ (see Example~\ref{ex:free_graph_algebras}).

In Section~\ref{sec:algebraic_relations} we show how algebraic relations (even asymptotic ones) can be used to give effective upper bounds on the free Stein dimension. Notably, we show that the existence of a diffuse central element in $A$ implies $\sigma(A,\tau)=1$ (see Corollary~\ref{cor:diffuse_center}). We also show that in the case that the free Stein dimension is maximized (i.e. $\sigma(A,\tau)=n$ when $A$ contains a generating set $X=X^*$ of size $n$), adding certain certain elements of $A''$ to $A$ cannot increase the free Stein dimension (see Theorem~\ref{thm:maximal_fSd}).

In Section~\ref{sec:abelian_vNas}, we show that the free Stein dimension is an invariant for \emph{separable abelian} von Neumann algebras. In particular, if $M$ is a separable abelian von Neumann algebra with faithful normal trace $\tau$ and $P$ denotes its set of minimal projections, then
    \[
        \sigma(A,\tau)=1 - \sum_{p\in P} \tau(p)^2
    \]
for any finitely generated unital $*$-algebra $A\subset M$ satisfying $A''=M$ (see Theorem~\ref{thm:fSd_for_abelian_vNas}).

In Section~\ref{sec:L^2_rigidity}, using results from \cite{Pet09} and \cite{DI16} we show that if $A''$ is a non-amenable $\II_1$ factor and $A$ contains a non-amenability set, then $\sigma(A,\tau)>1$ implies $A''$ is \emph{not} $L^2$-rigid (see Theorem~\ref{thm:L^2-rigid}).

Finally, in Section~\ref{sec:upper_lower_fSd} we define new invariants for von Neumann algebras called the upper and lower free Stein dimension, which are defined via limits superior and inferior (respectively) of the net of free Stein dimensions of all finitely generated unital $*$-algebras, directed by inclusion. While these invariants are much more difficult to compute in practice, we show that they inherit many of the properties of the free Stein dimension and that they can computed in few a interesting cases (see Theorems~\ref{thm:fSd_is_one_if} and \ref{thm:extended_fSd_abelian_vNas}).

\subsection*{Acknowledgements}
We would like to thank Srivatsav Kunnawalkam Elayavalli, Michael Hartglass, Benjamin Hayes, Jesse Peterson, and Dimitri Shlyakhtenko for many helpful discussions related to this paper. The first and second authors were supported by NSF grants DMS-1803557 and DMS-1856683, respectively. The first author was also supported by long term structural funding in the form of a Methusalem grant from the Flemish Government.


\section{Preliminaries}\label{sec:prelims}

\subsection{Notation}
Throughout, a \emph{tracial von Neumann algebra} is a $(M,\tau)$ consisting of a finite von Neumann algebra $M$ with a choice of a faithful normal tracial state $\tau$. We denote by $L^2(M,\tau)$ the GNS Hilbert space corresponding to $\tau$ and identify $M$ with its representation on this space. We let $M^\circ=\{x^\circ\colon x\in M\}$ denote the opposite von Neumann algebra, represented on $L^2(M^\circ,\tau^\circ)$ which can be identified with the dual Hilbert space to $L^2(M,\tau)$. We let $\mmop$ denote the von Neumann algebra tensor product, which is equipped with the tensor product trace $\tau\otimes\tau^\circ$ and represented on $L^2(\mmop, \tau\otimes\tau^\circ)$. We will typically repress the `$\circ$' notation on elements of $\mmop$. We denote by $J_\tau$ and $J_{\tau\otimes \tau^\circ}$ the Tomita conjugation operators on $L^2(M,\tau)$ and $L^2(\mmop,\tau\otimes\tau^\circ)$, respectively, determined by $J_\tau(x)=x^*$ and $J_{\tau\otimes\tau^\circ} (a\otimes b)  = a^*\otimes b^*$ for $x,a,b\in M$. We also let $J_{\HS}$ denote the conjugate linear isometric involution on $L^2(\mmop,\ttop)$ determined by $J_\HS(a\otimes b) = b^*\otimes a^*$ for $a,b\in \H$, which we note is simply the usual adjoint on $\HS(L^2(M,\tau))\cong L^2(\mmop,\ttop)$.

Our fundamental objects of study will be unital $*$-subalgebras $B\subset A\subset M$ such that $A$ is finitely generated over $B$. In particular, given a finite subset $X=X^*\subset M$, $B\<X\>$ denotes the unital $*$-algebra generated by $X\cup B$. The von Neumann subalgebras of $M$ and $\mmop$ generated by $A$ and $A\otimes A^\circ$ are denoted $A''$ and $(A\otimes A^\circ)''$, respectively, and their closures in $L^2(M,\tau)$ and $L^2(\mmop,\tau\otimes\tau^\circ)$ are denoted $L^2(A,\tau)$ and $L^2(A\otimes A^\circ,\tau\otimes\tau^\circ)$, respectively. For a unital $*$-subalgebra $B\subset A$, we let $L^2_B(A\otimes A^\circ,\tau\otimes \tau^\circ)$ denote the subspace of $B$-central vectors.

\subsection{Free Calculus}

Let $(M,\tau)$ be a tracial von Neumann algebra with unital $*$-subalgebra $B\subset M$. Given a finite subset $X=X^*\subset M$, let $T_X=\{t_x\colon x\in X\}$ be a set of formal variables equipped with the involution $t_x^*:=t_{x^*}$. We denote by $B\<T_X\>$ the $*$-algebra formally spanned by elements of the form
    \[
        b_0t_{x_1}b_1\cdots b_{d-1}t_{x_d} b_d,
    \]
where $b_0,\ldots, b_d\in B$ and $x_1,\ldots, x_d\in X$. Let $\ev_X\colon B\<T_X\>\to B\<X\>$ be the $*$-homomorphism determined by
    \[
        \ev_X(b_0t_{x_1}b_1\cdots b_{d-1}t_{x_d} b_d) := b_0 x_1 b_1\cdots b_{d_1} x_{b_d} b_d.
    \]
Given $p\in B\<T_X\>$, we will write $p(X)$ for $\ev_X(p)$.

%

Let us write $A = B\ang{X}$, the subalgebra of $M$ generated by $B$ and $X$.
The \emph{free difference quotients} are the derivations $\partial_{x:B} : B\ang{T_X} \to \btxbtxop$ defined by linearity and the conditions
\begin{align*}
	\partial_{x:B}(t_y) &= \delta_{x=y}1\otimes1,\\
	\partial_{x:B}(pq) &= p\partial_{x:B}(q) + \partial_{x:B}(p)q.
\end{align*}
When $X$ admits no algebraic relations, these give rise to densely defined operators $L^2(A) \to L^2(\aaop)$; even when $X$ admits relations, they still give densely defined relations
    \[
        \set{(p(X), (\partial_{x:B}p)(X)) \colon p \in B\ang{T_X}},
    \]
which admit adjoints $L^2(\aaop) \to L^2(A)$.
Because the relations are densely defined, their adjoints are unbounded operators $\partial_{x:B}^* : L^2(\aaop) \to L^2(A)$, although they may not themselves be densely-defined.
Explicitly, $\xi \in \dom(\partial_{x:B}^*) \subset L^2(\aaop)$ if and only if there is $\eta \in L^2(A)$ so that for every $p \in B\ang{T_X}$,
\[\ang{\eta, p(X)}_\tau = \ang{\xi, \partial_{x:B}p}_{\ttop},\]
in which case $\eta = \partial_{x:B}^*\xi$.

We write $\partial_{X:B}$ for the \emph{free gradient},
\begin{align*}
	\partial_{X:B} : B\ang{T_X} &\longrightarrow (\btxbtxop)^X \\
	p &\longmapsto \paren{ \partial_{x:B}p }_{x\in X}.
\end{align*}
Likewise, for $d \in \N$ we define the \emph{free Jacobian}
\begin{align*}
	\J_{X:B} : B\ang{T_X}^d &\longrightarrow M_{d\times X}\paren{\btxbtxop} \\
	(p_i)_i &\longmapsto (\partial_{x:B}p_i)_{i,x}.
\end{align*}
As with $\partial_{x:B}$, these can be thought of as densely-defined relations from $L^2(A)$ to $L^2(\aaop)^X$ and from $L^2(A)^d$ to $M_{d\times\abs X}\paren{ L^2(\aaop) }$, respectively, and they have corresponding unbounded (though not necessarily densely-defined) adjoints $\partial_{X:B}^*$ and $\J_{X:B}^*$.
When $B=\C$, we will elide it from the subscript.

We now recall some concepts from \cite{CN21}.
Let $\1$ denote the identity matrix in $M_{\abs X \times \abs X}(L^2(\aaop))$.
The \emph{free Stein irregularity} of $X$, denoted $\Sigma^*(X:B)$, is the distance from $\1$ to $\overline{\dom \J_{X:B}^*}$ measured in the Hilbert-Schmidt norm on $M_{\abs X \times \abs X}(L^2(\aaop))$.
Meanwhile, the \emph{free Stein dimension} is the quantity $\sigma(X:B) = \dim_{\aaop}\overline{\dom\paren{ \partial_{X:B}^* }}$.
These two are related by the equation $\sigma(X:B) = \abs{X} - \Sigma^*(X:B)^2$; see \cite[Theorem~2.11, Definition~2.12]{CN21}.

\subsection{Derivation Spaces and Free Stein Dimension}\label{subsec:derivation_spaces}

Let $A\subset M$ be a unital $*$-subalgebra. We denote by $\Der(A,\tau)$ the vector space of derivations $\delta\colon A \to L^2(A\otimes A^\circ,\tau\otimes \tau^\circ)$. We will also consider the following subspaces:
    \begin{itemize}
        \item $\InnDer(A,\tau)$: the inner derivations $\delta=[\cdot,\xi]$ for some $\xi\in L^2(A\otimes A^\circ,\tau\otimes\tau^\circ)$;
        \item $\Der_{1\otimes 1}(A,\tau)$: the derivations $\delta$ with $1\otimes 1\in \dom(\delta^*)$ when $\delta$ is viewed as a densely defined operator $L^2(A,\tau)\to L^2(A\otimes A^\circ,\tau\otimes \tau^\circ)$.
    \end{itemize}
If $B\subset A$ is a unital $*$-subalgebra, then we write $\Der(B\subset A,\tau)$ for the subspace of derivations vanishing on $B$ and $\InnDer(B\subset A,\tau)$ and $\Der_{1\otimes 1}(B\subset A,\tau)$ for the corresponding subspaces.  Note that $\Der(A,\tau) = \Der(\C\subset A,\tau)$.

The condition $1\otimes 1\in \dom(\delta^*)$ for $\delta\in \Der_{1\otimes 1}(A,\tau)$ implies $A\otimes A^\circ\subset \dom(\delta^*)$ by the same proof as \cite[Proposition 4.1]{VoiV}. Consequently, $\delta\colon L^2(A,\tau)\to L^2(\aaop,\ttop)$ is closable as a densely defined operator. This fact will be used implicitly in the rest of the paper. The closure of $\delta$ will be denoted $\overline{\delta}$. We also note that $\InnDer(A,\tau)\subset \Der_{1\otimes 1}(A,\tau)$, where
    \[
        ([\cdot,\xi])^*(1\otimes 1) = (1\otimes \tau - \tau\otimes 1)(J_{\ttop}\xi)
    \]
for any $\xi\in L^2(\aaop,\ttop)$.

Recall that we say $\delta\in \Der(A,\tau)$ is \emph{real} if $J_\HS\delta(x^*)=\delta(x)$ for all $x\in A$. Equivalently,
    \[
        \< x\cdot \delta(y), \delta(z)\>_{\ttop} = \< \delta(z^*), \delta(y^*)\cdot x^*\>_{\ttop}
    \]
for all $x,y,z\in A$. For any $\delta\in \Der(A,\tau)$, we can define $\delta^\dagger\in \Der(A,\tau)$ by $\delta^\dagger(x):=J_\HS\delta(x^*)$. Then
    \[
        \text{Re}(\delta):= \frac{1}{2}(\delta+\delta^\dagger) \qquad \text{ and }\qquad \text{Im}(\delta):= \frac{1}{2i}(\delta - \delta^\dagger)
    \]
are both real derivations. In particular, if $\delta\in \Der_{1\otimes 1}(B\subset A,\tau)$ then $\delta^\dagger, \text{Re}(\delta), \text{Im}(\delta)\in \Der_{1\otimes 1}(B\subset A,\tau)$ with
    \begin{align*}
        (\delta^\dagger)^*(1\otimes 1) &= J_\tau \delta^*(1\otimes 1)\\
        (\text{Re}(\delta))^*(1\otimes 1) &= \frac{1}{2}(1+J_\tau)\delta^*(1\otimes 1)\\
        (\text{Im}(\delta))^*(1\otimes 1) &=\frac{i}{2}(1- J_\tau)\delta^*(1\otimes 1).
    \end{align*}

The right action of $(A\otimes A^\circ)''$ on $L^2(A\otimes A^\circ,\tau\otimes \tau^\circ)$ induces a right action on $\Der(B\subset A,\tau)$:
    \[
        (\delta\cdot m)(x):=\delta(x)m
    \]
for $m\in (\aaop)''$ and $x\in A$. In particular, $\InnDer(B\subset A,\tau)$ is invariant under the right action of $(\aaop)''$ while $\Der_{1\otimes 1}(B\subset A,\tau)$ is only invariant under the right action of $\aaop$. A priori, this is just an algebraic module structure, but we will see below that it can be made into a Hilbert module.

\begin{lem}\label{lem:derivations_as_Hilbert_spaces}
Let $(M,\tau)$ be a tracial von Neumann algebra with unital $*$-subalgebras $B\subset A\subset M$ such that $A$ is finitely generated over $B$. Let $X=X^*\subset A$ be any finite subset such that $A=B\<X\>$. Define a linear map
	\begin{align*}
		\phi_X\colon \Der(B\subset A,\tau)&\to L^2(A\otimes A^\circ,\tau\otimes \tau^\circ)^{X} \\
			\delta &\mapsto (\delta(x))_{x\in X}.
	\end{align*}
Then $\phi_X$ is injective, right $(A\otimes A^\circ)''$-linear, and has closed range. Moreover, if $\ev_X\colon B\<T_X\>\to B\<X\>$ is injective then $\phi_X$ is surjective.
\end{lem}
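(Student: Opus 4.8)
The plan is to treat the four assertions in increasing order of difficulty, with a single \emph{cocycle identity} serving as the common engine. Since $A=B\ang{X}$ and every $\delta\in\Der(B\subset A,\tau)$ vanishes on $B$ and satisfies the Leibniz rule, the value of $\delta$ on any element of $A$ is determined by the tuple $(\delta(x))_{x\in X}$; this gives injectivity immediately. Right $(\aaop)''$-linearity is equally immediate from the definition of the right action, since $\phi_X(\delta\cdot m)=(\delta(x)m)_{x\in X}=\phi_X(\delta)\cdot m$ with the module action on $L^2(\aaop,\ttop)^X$ taken coordinatewise.

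The key tool is the identity
\[
    \delta(p(X))=\sum_{x\in X}(\partial_{x:B}p)(X)\#\delta(x)
    \qquad(p\in B\ang{T_X}),
\]
where for $\eta=\sum_i a_i\otimes b_i\in\aaop$ and $\xi\in L^2(\aaop,\ttop)$ we write $\eta\#\xi:=\sum_i a_i\xi b_i$ for the natural $A$-bimodule action. To prove this I would observe that both sides, viewed as functions of $p$, are derivations $B\ang{T_X}\to L^2(\aaop,\ttop)$ (with $B\ang{T_X}$ acting through $\ev_X$) that vanish on $B$ and agree on each generator $t_y$, since $\partial_{x:B}t_y=\delta_{x=y}1\otimes1$; hence they coincide.

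For the range I would show
\[
    \im\phi_X=\set{(\xi_x)_{x\in X}\colon \sum_{x\in X}(\partial_{x:B}r)(X)\#\xi_x=0\text{ for all }r\in\ker\ev_X}.
\]
The inclusion ``$\subseteq$'' follows from the cocycle identity applied to $r\in\ker\ev_X$, since then $r(X)=0$ forces the right-hand side to vanish. For ``$\supseteq$'', given such a tuple I would \emph{define} $\delta(p(X)):=\sum_x(\partial_{x:B}p)(X)\#\xi_x$; the stated constraint is exactly what is needed for this to be well defined on $A=\ev_X(B\ang{T_X})$, and the resulting $\delta$ is then a derivation vanishing on $B$ with $\phi_X(\delta)=(\xi_x)_x$. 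Closedness of the range is now the crux: for each fixed $r$, the map $(\xi_x)_x\mapsto\sum_x(\partial_{x:B}r)(X)\#\xi_x$ is \emph{bounded} on $L^2(\aaop,\ttop)^X$, because $(\partial_{x:B}r)(X)\in\aaop$ and elements of $A\subset M$ act boundedly on $L^2$ from either side; its kernel is therefore closed, and $\im\phi_X$ is the intersection of these closed kernels over $r\in\ker\ev_X$.

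Finally, surjectivity when $\ev_X$ is injective is the special case $\ker\ev_X=\{0\}$: there are no constraints, so the description above yields $\im\phi_X=L^2(\aaop,\ttop)^X$. The main obstacle I anticipate is the closed-range assertion; the essential point is the recognition that the range is cut out precisely by the relations in $\ker\ev_X$, and that each such relation imposes a \emph{bounded} (hence closed-kernel) linear condition through the bimodule action, so that nothing beyond intersecting closed subspaces—no completion or limiting argument—is required.
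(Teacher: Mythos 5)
Your proposal is correct, and all four parts go through; the difference from the paper lies in how closedness of the range is obtained. The paper takes a convergent sequence $(\delta_n(x))_{x\in X}\to(\xi_x)_{x\in X}$, uses the same cocycle identity $\delta_n(p(X))=\sum_{x}(\partial_{x:B}p)(X)\#\delta_n(x)$ to see that $\delta_n(a)$ converges for \emph{every} $a\in A$, and then defines the limit derivation pointwise, checking it still satisfies the Leibniz rule and vanishes on $B$; closedness follows because the limit tuple is hit by this derivation. You instead prove the stronger, static statement
\[
    \im\phi_X=\bigcap_{r\in\ker\ev_X}\ker\Bigl(\,(\xi_x)_{x\in X}\mapsto\textstyle\sum_{x\in X}(\partial_{x:B}r)(X)\#\xi_x\Bigr),
\]
and observe that each constraint map is bounded (since $(\partial_{x:B}r)(X)\in A\otimes A^\circ$ acts boundedly on $L^2$), so the range is an intersection of closed subspaces and no limiting argument is needed. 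Both directions of your characterization are sound: ``$\subseteq$'' is the cocycle identity applied to relations, and ``$\supseteq$'' is exactly the well-definedness check for the derivation you build, with the Leibniz rule inherited from that of $\partial_{x:B}$ through the compatibility of $\#$ with the bimodule actions. Your route buys an exact algebraic description of $\im\phi_X$ as the solution set of the relations in $\ker\ev_X$ (from which surjectivity when $\ev_X$ is injective is literally the empty-constraint case, rather than a separate construction as in the paper), while the paper's sequential argument is more constructive and is the template for similar limiting arguments used elsewhere in the paper (e.g., approximately inner derivations). One small point worth making explicit if you write this up: the two sides of your cocycle identity are derivations on $B\ang{T_X}$ with respect to the bimodule structure pulled back along $\ev_X$, and the set where two such derivations agree is a subalgebra containing $B\cup T_X$; that is the precise sense in which ``they agree on generators, hence coincide.''
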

\begin{proof} Any $\delta\in \Der(B\subset A,\tau)$ is uniquely determined by its values on $X$. Hence $\phi_X$ is injective. Also $\phi_X(\delta\cdot m)=\phi_X(\delta)\cdot m$ for $m\in (A\otimes A^\circ)''$ acting diagonally. Next, suppose
	\[
		(\xi_x)_{x\in X} = \lim_{n\to\infty} (\delta_n(x))_{x\in X}
	\]
for some sequence $(\delta_n)_{n\in \N} \subset \Der(B\subset A,\tau)$. This implies for all $p\in B\<T_X\>$ that
	\[
		\delta_n(p(X)) = \sum_{x\in X} \ev_X(\partial_{x\colon B} p) \# \delta_n(x) \to \sum_{x\in X} \ev_X(\partial_{x\colon B} p)\# \xi_x.
	\]	
Thus we can define a map $\delta\colon A\to L^2(A\otimes A^\circ, \tau\otimes \tau^\circ)$ by
	\[
		\delta(p(X))) =\lim_{n\to\infty} \delta_n(p(X)).
	\]
It follows that $\delta$ satisfies the Leibniz rule and vanishes on $B$. That is, $\delta\in \Der(B\subset A,\tau)$. Also, $\phi_X(\delta) = (\xi_x)_{x\in X}$, so the range of $\phi_X$ is closed.

If $\ev_X$ is injective, then $\partial_{x\colon B} \in \Der(B\subset A,\tau)$ for all $x\in X$. Then given any $(\xi_x)_{x\in X}\subset L^2(A\otimes A^\circ,\tau\otimes \tau^\circ)^{X}$, we can define
	\[
		\delta:= \sum_{x\in X} \partial_{x\colon B} \# \xi_x
	\]
and obtain $\phi_X(\delta) = (\xi_x)_{x\in X}$.
\end{proof}

The above lemma implies that $\Der(B\subset A,\tau)$ is a Hilbert space under the inner product
	\[
		\<\delta_1,\delta_2\>_X:= \sum_{x\in X} \<\delta_1(x), \delta_2(x)\>_{\tau\otimes\tau^\circ}.
	\]
Note that convergence with respect to the induced norm is equivalent to pointwise convergence on $X$, and consequently equivalent to pointwise convergence on $A$. Moreover, $\Der(B\subset A,\tau)$ is a closed subspace of $\Der(A,\tau)$ with respect to this norm. 

Let $Y=Y^*\subset A$ be a finite subset satisfying $B\<Y\>=A$. For each $y\in Y$, there exists $p_y\in B\<T_X\>$ such that $y=p_y(X)$. Consequently, for $\delta\in \Der(B\subset A,\tau)$ we have
    \[
       \|\delta\|_Y^2 \leq \sum_{\substack{x\in X\\ y\in Y}} \| (\partial_{x\colon B} p_y)(X) \|^2 \|\delta\|_X^2.
    \]
Similarly, one can bound $\|\delta\|_X$ by a scalar multiple of $\|\delta\|_Y$; that is, the norms are equivalent. Consequently, for a subset $S\subset \Der(B\subset A,\tau)$, we let $\overline{S}$ denote its closure with respect to any generating set $X=X^*$ over $B$.

For any closed  $(\aaop)''$-invariant subspace $\H\leq \Der(B\subset A,\tau)$, Lemma~\ref{lem:derivations_as_Hilbert_spaces} also implies $\phi_X(\H)$ is a right Hilbert $(A\otimes A^\circ)''$-submodule of $L^2(A\otimes A^\circ,\tau\otimes \tau^\circ)^X$, and so one can compute its von Neumann dimemsion. It turns out this is independent of the generating set $X$:

\begin{lem}
Let $X,Y\subset A$ be self-adjoint finite subsets satisfying $B\<X\>=B\<Y\>=A$. Then for any closed $(\aaop)''$-invariant subspace $\H\leq \Der(B\subset A,\tau)$, the von Neumann dimensions of $\phi_X(\H)$ and $\phi_Y(\H)$ as right Hilbert $(A\otimes A^\circ)''$-modules agree:
    \[
        \dim \phi_X(\H)_{(A\otimes A^\circ)''} = \dim \phi_Y(\H)_{(A\otimes A^\circ)''}.
    \]
\end{lem}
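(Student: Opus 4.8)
The plan is to exhibit a bounded, right $(A\otimes A^\circ)''$-linear bijection with bounded inverse between $\phi_X(\H)$ and $\phi_Y(\H)$, and then to invoke the fact that the von Neumann dimension of a right Hilbert module is invariant under such maps. The key observation is that both $\phi_X(\H)$ and $\phi_Y(\H)$ are images of the \emph{same} abstract Hilbert space $\H\le\Der(B\subset A,\tau)$ under the injective, right-linear maps $\phi_X$ and $\phi_Y$ of Lemma~\ref{lem:derivations_as_Hilbert_spaces}. Accordingly I would define
\[
T := \phi_Y \circ \phi_X^{-1} \colon \phi_X(\H) \to \phi_Y(\H),
\]
where $\phi_X^{-1}$ is the inverse of $\phi_X$ on its (closed) range; concretely $T$ sends $\phi_X(\delta)\mapsto\phi_Y(\delta)$ for $\delta\in\H$. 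Since $\phi_X$ is injective with closed range, $\phi_X^{-1}$ is well-defined and bounded onto $\H$.

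First I would check that $T$ and $T^{-1}$ are bounded. This is precisely the content of the norm-equivalence discussion preceding the lemma: for $\delta\in\H$ one has $\|T\phi_X(\delta)\|=\|\phi_Y(\delta)\|=\|\delta\|_Y$ while $\|\phi_X(\delta)\|=\|\delta\|_X$, and the estimate $\|\delta\|_Y\le C\|\delta\|_X$ (obtained by writing each $y\in Y$ as $p_y(X)$ and applying the Leibniz rule), together with the symmetric bound, shows $T$ is a bounded bijection with bounded inverse. Next I would verify that $T$ is right $(A\otimes A^\circ)''$-linear: because $\phi_X(\delta\cdot m)=\phi_X(\delta)\cdot m$ and $\phi_Y(\delta\cdot m)=\phi_Y(\delta)\cdot m$ for $m\in(A\otimes A^\circ)''$, the map $T$ intertwines the two diagonal right actions, so $T(\xi\cdot m)=(T\xi)\cdot m$ for all $\xi\in\phi_X(\H)$.

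Having produced a bounded invertible right-module map $T$, I would upgrade it to a unitary one by polar decomposition. Its adjoint $T^*\colon\phi_Y(\H)\to\phi_X(\H)$ is again right $(A\otimes A^\circ)''$-linear, so $|T|=(T^*T)^{1/2}$ lies in the von Neumann algebra of right-module maps on $\phi_X(\H)$ and is positive and invertible; hence the partial isometry $U=T|T|^{-1}$ is a right $(A\otimes A^\circ)''$-linear unitary from $\phi_X(\H)$ onto $\phi_Y(\H)$. Since the von Neumann dimension over a finite von Neumann algebra—here $(A\otimes A^\circ)''$ with its trace $\tau\otimes\tau^\circ$—is preserved by unitary module isomorphisms, this gives $\dim \phi_X(\H)_{(A\otimes A^\circ)''}=\dim \phi_Y(\H)_{(A\otimes A^\circ)''}$, as desired.

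The boundedness and right-linearity are essentially handed to us by Lemma~\ref{lem:derivations_as_Hilbert_spaces} and the norm equivalence. The step requiring genuine care—and what I regard as the main obstacle—is the passage from a merely \emph{bounded} invertible module map to a \emph{unitary} one: one must confirm that the commutant of the right $(A\otimes A^\circ)''$-action is a von Neumann algebra closed under the continuous functional calculus used to form $|T|^{-1}$, and that the resulting $U$ is genuinely onto (which uses that $T$ is bijective, not merely injective). Equivalently, one may bypass polar decomposition and invoke directly that von Neumann dimension depends only on the isomorphism class of a Hilbert module under bounded module maps; either way the crux is that the dimension is insensitive to the failure of $T$ to be isometric.
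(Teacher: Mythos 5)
Your proposal is correct and takes essentially the same approach as the paper: both define $T=\phi_Y\circ\phi_X^{-1}$ on $\phi_X(\H)$, use the equivalence of the norms $\|\cdot\|_X$ and $\|\cdot\|_Y$ to see that $T$ is a bounded bijection with bounded inverse, note its right $(A\otimes A^\circ)''$-linearity, and conclude the dimensions agree. The only difference is that you make explicit the polar-decomposition step upgrading the bounded module isomorphism to a unitary one, a standard point the paper leaves implicit by appealing directly to the invariance of von Neumann dimension under Hilbert module isomorphisms.
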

\begin{proof}
The map 
    \[
        \phi_Y\circ \phi_X^{-1}\colon \phi_X(\H) \to \phi_Y(\H)
    \]
is a bijection by Lemma~\ref{lem:derivations_as_Hilbert_spaces}. Moreover,
    \[
        \| \phi_Y\circ \phi_X^{-1}(\phi_X(\delta)) \|_Y = \|\delta\|_Y\leq c \|\delta\|_X,
    \]
for some $c>0$ since the norms $\|\cdot\|_X$ and $\|\cdot \|_Y$ are equivalent. Thus $\phi_Y\circ \phi_X^{-1}$ is continuous, and by symmetry so is its inverse $\phi_X\circ \phi_Y^{-1}$. Therefore it is a right Hilbert $(A\otimes A^\circ)''$-module isomorphism and the dimensions agree.
\end{proof}

In light of the previous lemma, we make the following definition:

\begin{defi}
Let $(M,\tau)$ be a tracial von Neumann algebra with unital $*$-subalgebras $B\subset A\subset M$ such that $A$ is finitely generated over $B$. For a closed $(\aaop)''$-invariant subspace $\H\leq \Der(B\subset A,\tau)$,
    \[
        \dim \H_{(A\otimes A^\circ)''}:= \dim \phi_X(\H)_{(A\otimes A^\circ)''},
    \]
where $X=X^*\subset A$ is any finite subset satisfying $B\<X\>=A$.
\end{defi}

An important first example comes from the space of inner derivations, or rather its closure which is a right Hilbert $(\aaop)''$-module. Recall that $L^2_B(\aaop,\ttop)$ and $L^2_A(\aaop,\ttop)$ denote the $B$-central and $A$-central vectors in $L^2(A\otimes A^\circ,\tau\otimes \tau^\circ)$, respectively, which are right $(A\otimes A^\circ)''$-modules.

\begin{lem}\label{lem:derivations_from_non-A-central_vectors}
Let $(M,\tau)$ be a tracial von Neumann algebra with unital $*$-subalgebras $B\subset A\subset M$ such that $A$ is finitely generated over $B$. For any 
    \[
        \H_{(A\otimes A^\circ)''} \leq L^2_B(A\otimes A^\circ,\tau\otimes \tau^\circ)\ominus L^2_A(A\otimes A^\circ,\tau\otimes \tau^\circ)
    \]
one has
    \[
        \dim \overline{\{[\cdot, \xi]\colon \xi\in \H\}}_{(A\otimes A^\circ)''}  = \dim \H_{(A\otimes A^\circ)''}.
    \]
\end{lem}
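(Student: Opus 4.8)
The plan is to show that the map $\xi \mapsto [\,\cdot\,,\xi]$ sets up an isometry (up to a known factor) between the module $\H$ of vectors and the corresponding space of inner derivations, and then invoke Lemma~\ref{lem:derivations_as_Hilbert_spaces} together with the definition of $\dim$ to pass to von Neumann dimensions. The central observation is that for $\xi \in L^2_B(\aaop,\ttop)$, the derivation $[\,\cdot\,,\xi]$ vanishes on $B$ (since $\xi$ is $B$-central, $b\xi - \xi b = 0$ for $b \in B$), so $[\,\cdot\,,\xi] \in \Der(B \subset A,\tau)$ as required, and this assignment is right $(\aaop)''$-linear because $[\,\cdot\,,\xi]\cdot m = [\,\cdot\,,\xi m]$ by associativity of the right action.

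First I would fix a generating set $X = X^* \subset A$ with $B\langle X\rangle = A$ and compute $\phi_X([\,\cdot\,,\xi]) = (x\xi - \xi x)_{x \in X} = ([x,\xi])_{x\in X}$. The key computation is to evaluate the $X$-norm:
\begin{align*}
    \norm{[\,\cdot\,,\xi]}_X^2 = \sum_{x\in X} \norm{x\xi - \xi x}_{\ttop}^2.
\end{align*}
Here I expect the main obstacle to be controlling the kernel and the norm comparison: the map $\xi \mapsto [\,\cdot\,,\xi]$ is \emph{not} isometric, and its kernel on all of $L^2_B(\aaop,\ttop)$ is precisely the $A$-central vectors $L^2_A(\aaop,\ttop)$ (since $[\,\cdot\,,\xi] = 0$ iff $x\xi = \xi x$ for all $x \in X$, i.e. $\xi$ commutes with all of $A$). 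This is exactly why the hypothesis restricts $\H$ to lie in the orthogonal complement $L^2_B \ominus L^2_A$: on this complement the map is injective. To conclude that it descends to a bounded bijection with bounded inverse between $\H$ and $\overline{\{[\,\cdot\,,\xi]\colon \xi \in \H\}}$, I would argue that the two norms are equivalent on $\H$, after which the same continuity-plus-symmetry argument used in the proof of the preceding dimension-independence lemma shows it is a right Hilbert $(\aaop)''$-module isomorphism, forcing the dimensions to agree.

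The equivalence of norms is where the real work lies. One direction, $\norm{[\,\cdot\,,\xi]}_X \lesssim \norm{\xi}$, is easy: $\norm{x\xi - \xi x} \le 2\norm{x}_{\infty}\norm{\xi}$ for each $x \in X$, giving a uniform constant. The reverse inequality—bounding $\norm{\xi}$ by a multiple of $\norm{[\,\cdot\,,\xi]}_X$ on the complement $L^2_B \ominus L^2_A$—is the crux, since a priori the commutator map could have arbitrarily small norm on non-central vectors. I would handle this by recognizing the relevant operator as (a compression of) a coarse/commutator operator whose bottom of spectrum is bounded away from zero precisely after excising the $A$-central kernel: concretely, $\sum_{x\in X}(L_x - R_x)^*(L_x - R_x)$ is a positive operator on $L^2_B(\aaop,\ttop)$ whose kernel is $L^2_A$, so its restriction to the invariant complement $L^2_B \ominus L^2_A$ is bounded below, and this lower bound furnishes the needed constant. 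Since $\H$ is a closed $(\aaop)''$-submodule of this complement, the bound passes to $\H$, completing the identification of modules and hence of their von Neumann dimensions.
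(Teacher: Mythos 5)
There is a genuine gap, and it sits exactly where you predicted the ``real work'' lies: the claim that the positive operator $T = \sum_{x\in X}(L_x - R_x)^*(L_x - R_x)$, having kernel $L^2_A(\aaop,\ttop)$, must be bounded below on the invariant complement $L^2_B(\aaop,\ttop)\ominus L^2_A(\aaop,\ttop)$. A positive operator is bounded below on the orthocomplement of its kernel only if it has closed range (a spectral gap at $0$), and this fails here in general. Concretely, take $B=\C$ and $A=\C[x]\subset L^\infty[0,1]$ with Lebesgue trace, $x(t)=t$. Then $L^2(\aaop,\ttop)\cong L^2([0,1]^2)$, the $A$-central vectors are $\{0\}$, and $T$ is multiplication by $|s-t|^2$, which is injective but has $0$ in its spectrum; so no constant $c>0$ with $\norm{\xi}\le c\norm{[\cdot,\xi]}_X$ exists on the complement of the kernel. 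This failure is not a technicality: it is precisely the phenomenon of approximately inner derivations that are not inner (the paper's Section~\ref{sec:L^2_rigidity} notes that norm-equivalence \emph{does} hold when $A$ contains a non-amenability set, which is a genuine restriction), and it is the reason the statement of the lemma involves the closure $\overline{\{[\cdot,\xi]\colon\xi\in\H\}}$ at all --- if your norm equivalence held, the image would automatically be closed.

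The paper circumvents this by never comparing norms. Writing $\H = pL^2(\aaop,\ttop)$ for a projection $p\in(\aaop)''$, it considers the \emph{algebraic} module $P=\{[\cdot,pm]\colon m\in(\aaop)''\}$ and notes that $pm\mapsto[\cdot,pm]$ is an algebraic right-module isomorphism from $p(\aaop)''$ onto $P$ (only injectivity is needed, which is your correct observation that the kernel consists of $A$-central vectors). Hence $\dim P_{(\aaop)''}=\ttop(p)=\dim\H_{(\aaop)''}$ in L\"uck's algebraic sense. The decisive ingredient is then \cite[Theorem 6.24]{Luc02}: for a finitely generated projective module, the von Neumann dimension of its Hilbert-module completion is independent of the chosen compatible inner product and equals the algebraic dimension, so $\dim P = \dim\overline{P} = \dim\overline{\{[\cdot,\xi]\colon\xi\in\H\}}$ by density of $p(\aaop)''$ in $\H$. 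In other words, L\"uck's dimension theory is what substitutes for the (false) spectral gap; without it, or some equivalent continuity property of $\dim$ under non-isometric module maps with dense range, your argument cannot be completed as written.
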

\begin{proof}
Let $p\in (A\otimes A^\circ)''$ be the projection such that $\H=pL^2(A\otimes A^\circ,\tau\otimes\tau^\circ)$, and consider the projective right $(A\otimes A^\circ)''$-module
    \[
        P:=\{[\cdot, pm]\colon m\in (A\otimes A^\circ)''\}\subset \Der(B\subset A,\tau).
    \]
The map $p(A\otimes A^\circ)''\ni pm\mapsto [\cdot,pm]\in P$ is injective since the derivation $[\cdot,pm]$ being zero implies $pm$ is $A$-central and hence zero. Thus this map is a right $(A\otimes A^\circ)''$-module isomorphism and therefore the dimensions of its domain and range agree (see \cite[Equation 6.3]{Luc02}):
    \[
        \dim P_{(A\otimes A^\circ)''} = \dim p(A\otimes A^\circ)''_{(A\otimes A^\circ)''}= \tau\otimes \tau^\circ(p) = \dim \H_{(A\otimes A^\circ)''}.
    \]
On the other hand, if $X\subset A$ is a generating set then $P$ has an inner product given by $\<\cdot,\cdot\>_X$. So \cite[Theorem 6.24]{Luc02} and the density of $p(A\otimes A)''$ in $\H$ imply
    \[
        \dim P_{(A\otimes A^\circ)''} = \dim \overline{P}_{(A\otimes A^\circ)''} = \dim \overline{\{[\cdot, \xi] \colon \xi\in \H\}}_{(A\otimes A^\circ)''}.\qedhere
    \]
\end{proof}

Also of interest for this paper is the right Hilbert $(\aaop)''$-module of closed subspace $\overline{\Der_{1\otimes 1}(B\subset A,\tau)}$. Recall that $\Der_{1\otimes 1}(B\subset A,\tau)$ is $\aaop$-invariant, and consequently its closure is $(\aaop)''$-invariant by the Kaplansky density theorem. The dimension of this $(\aaop)''$-module is the focus of this paper:

\begin{defi}\label{def:fsd}
Let $(M,\tau)$ be a tracial von Neumann algebra, and let $B\subset A\subset M$ be unital $*$-subalgebras with $A$ finitely generated over $B$. The \textbf{free Stein dimension} of $A$ over $B$ with respect to $\tau$ is the quantity
    \[
        \sigma(B\subset A,\tau):= \dim \overline{\Der_{1\otimes 1}(B\subset A,\tau)}_{(A\otimes A^\circ)''}.
    \]
For $B=\C$, we simply write $\sigma(A,\tau)$.
\end{defi}

This quantity is equivalent to the quantity defined in \cite{CN21}. Let $X=X^*\subset A$ be a finite subset with $B\<X\>=A$. Then $\sigma(X\colon B)$ is the \emph{free Stein dimension} of $X$ over $B$ defined in \cite[Definition 2.12]{CN21}, and by \cite[Lemma 3.1]{CN21}
    \begin{align*}
        \sigma(X\colon B) &= \dim {_{(B\<X\>\otimes B\<X\>^\circ)''}}\overline{\{(J_{\tau\otimes \tau^\circ}\delta(x))_{x\in X}\colon \delta\in \Der_{1\otimes 1}(B\subset B\<X\>,\tau)\}}\\
            &= \dim \overline{\Der_{1\otimes 1}(B\subset B\<X\>,\tau)}_{(B\<X\>\otimes B\<X\>^\circ)''} = \sigma(B\subset A,\tau).
    \end{align*}
Thus $\sigma(B\subset A,\tau)$ is not a new quantity. Instead, we are merely attempting to adopt a more natural perspective in terms of derivations, which at the very least simplifies the exposition of the paper.

\begin{rem}
In \cite{CS05}, Connes and Shlyakhtenko defined $L^2$-Betti numbers $\beta_k^{(2)}(A,\tau)$ associated to tracial $*$-algebras. For $k=0,1$ these can be expressed in the above notation as:
    \begin{align*}
        \beta_0^{(2)}(A,\tau) &= 1 - \dim \InnDer(A,\tau)_{(A\otimes A^\circ)''},\\
        \beta_1^{(2)}(A,\tau) &= \dim \Der(A,\tau)_{(A\otimes A^\circ)''} - \dim \InnDer(A,\tau)_{(A\otimes A^\circ)''}
    \end{align*}
(compare the end of the proof of \cite[Lemma 3.1]{Shl21}). Consequently, one always has
    \[
        \sigma(A,\tau) \leq \dim \Der(A,\tau)_{(A\otimes A^\circ)''} = \beta_1^{(2)}(A,\tau) - \beta_0^{(2)}(A,\tau) +1.
    \]
If $\sigma(A,\tau)=|X|$ some finite generating set $X=X^*\subset A$, then Lemma~\ref{lem:derivations_as_Hilbert_spaces} implies the above is an equality. This happens precisely when $X$ has zero free Stein irregularity, for example whenever $\Phi^*(X)<\infty$. If $A''$ is diffuse so that $L^2_A(A\otimes A^\circ,\tau\otimes \tau^\circ)=\{0\}$, then Lemma~\ref{lem:derivations_from_non-A-central_vectors} implies $\beta_0^{(2)}(A,\tau)=0$. So in this case the above inequality reduces to $\sigma(A,\tau)\leq \beta_1^{(2)}(A,\tau) +1$.
\end{rem}


\section{Direct Sum and Amplification Formulae}\label{sec:direct_sum_and_amplification_formulae}

In this section we show that the free Stein dimension satisfies the expected direct sum and amplification formulas (see Theorems~\ref{thm:direct_sum_formula} and \ref{thm:amplification_formula}). The key unifying feature in these proofs is that the algebras contain finite dimensional subalgebras.

\begin{thm}\label{thm:excising_a_finite_dimensional_subalgebra}
Let $(M,\tau)$ be a tracial von Neumann algebra with finitely generated unital $*$-subalgebra $A\subset M$. For a finite dimensional unital $*$-subalgebra $B\subset A$, one has
    \[
        \sigma(A,\tau) = \sigma(B\subset A, \tau) + \sigma(B,\tau).
    \]
\end{thm}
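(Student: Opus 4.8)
The plan is to realize $\sigma(A,\tau)$, $\sigma(B\subset A,\tau)$, and a copy of $\sigma(B,\tau)$ as the three terms of a short exact sequence of Hilbert $(\aaop)''$-modules and then invoke additivity of von Neumann dimension. Write $N=(\aaop)''$ and consider the restriction map $r\colon \overline{\Der_{1\otimes1}(A,\tau)}\to L^2(\aaop,\ttop)^Z$, $\delta\mapsto(\delta(z))_{z\in Z}$ for a generating set $Z=Z^*$ of $B$; I regard $r(\delta)=\delta|_B$ as a derivation of $B$ valued in $L^2(\aaop,\ttop)$. Choosing generating sets and using the Leibniz rule shows $r$ is bounded and right $N$-linear, so it descends to the closure. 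Since $B$ is finite dimensional, every derivation of $B$ is inner, $\delta_0=[\cdot,\xi_0]|_B$ for some $\xi_0\in L^2(\aaop,\ttop)$, and $[\cdot,\xi_0]\in\InnDer(A,\tau)\subseteq\Der_{1\otimes1}(A,\tau)$; hence $\im r$ is exactly the (closed, by the argument of Lemma~\ref{lem:derivations_as_Hilbert_spaces}) Hilbert $N$-module of all derivations $B\to L^2(\aaop,\ttop)$. Additivity of Lück's dimension along $0\to\ker r\to\overline{\Der_{1\otimes1}(A,\tau)}\xrightarrow{r}\im r\to0$ will then give $\sigma(A,\tau)=\dim_N\ker r+\dim_N\im r$.

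Identifying $\ker r$ is the step requiring the most care. The inclusion $\overline{\Der_{1\otimes1}(B\subset A,\tau)}\subseteq\ker r$ is clear. For the converse, take $\delta=\lim_n\delta_n$ with $\delta_n\in\Der_{1\otimes1}(A,\tau)$ and $\delta|_B=0$, and correct each $\delta_n$ by an inner derivation: writing $B=\bigoplus_i M_{n_i}$ with matrix units $e^{(i)}_{jk}$, set $\xi_n=\sum_i\tfrac1{n_i}\sum_{j,k}e^{(i)}_{jk}\#\delta_n(e^{(i)}_{kj})$, so that $[\cdot,\xi_n]|_B=\delta_n|_B$ and $\td_n:=\delta_n-[\cdot,\xi_n]\in\Der_{1\otimes1}(B\subset A,\tau)$. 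Because $\delta$ vanishes on $B$ we have $\delta_n(e^{(i)}_{kj})\to0$, hence $\xi_n\to0$ in $L^2(\aaop,\ttop)$ and, since the left/right actions are by bounded operators, $[\cdot,\xi_n]\to0$ in the derivation norm; thus $\td_n\to\delta$ and $\delta\in\overline{\Der_{1\otimes1}(B\subset A,\tau)}$. This yields $\dim_N\ker r=\sigma(B\subset A,\tau)$.

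It remains to show $\dim_N\im r=\sigma(B,\tau)$, and here the key observation is that the relevant central projection already lives in the finite dimensional subalgebra $B\otimes B^\circ$. The map $\xi\mapsto[\cdot,\xi]|_B$ is a bounded, surjective, right $N$-linear map $L^2(\aaop,\ttop)\to\im r$ with kernel exactly $L^2_B(\aaop,\ttop)$, so arguing as in Lemma~\ref{lem:derivations_from_non-A-central_vectors} gives $\dim_N\im r=1-\dim_N L^2_B(\aaop,\ttop)$. I will then check that left multiplication by $e_B:=\sum_i\tfrac1{n_i}\sum_{j,k}e^{(i)}_{jk}\otimes(e^{(i)}_{kj})^\circ\in B\otimes B^\circ$ is the orthogonal projection onto $L^2_B(\aaop,\ttop)$ (one verifies $(b\otimes1)e_B=(1\otimes b^\circ)e_B$ and that $e_B$ fixes $L^2_B$), whence $\dim_N L^2_B(\aaop,\ttop)=\ttop(e_B)$. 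The very same $e_B$ implements the projection onto $L^2_B(B\otimes B^\circ,\ttop)$ inside $L^2(B\otimes B^\circ,\ttop)$, and since $\Der_{1\otimes1}(B,\tau)=\InnDer(B,\tau)$ for finite dimensional $B$, the definition of $\sigma$ together with the same computation gives $\sigma(B,\tau)=1-\dim_{(B\otimes B^\circ)''}L^2_B(B\otimes B^\circ,\ttop)=1-\ttop(e_B)$. As $\ttop$ agrees on $B\otimes B^\circ$ regardless of the ambient algebra, the two quantities coincide.

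The main obstacle is the kernel computation: one must verify that a limit of derivations in $\Der_{1\otimes1}(A,\tau)$ which happens to vanish on $B$ is approximable by derivations vanishing on $B$ at every stage, and the explicit inner correction by $\xi_n$ — available precisely because $B$ is finite dimensional, so its derivations are inner — is what makes this work. The remaining content is bookkeeping with von Neumann dimensions, streamlined by the pleasant fact that the projection $e_B$ onto the $B$-central vectors sits in $B\otimes B^\circ$ and therefore has the same trace whether computed in $(\aaop)''$ or in $(B\otimes B^\circ)''$.
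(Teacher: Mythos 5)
Your proof is correct, and it takes a genuinely different route from the paper's. Both arguments turn on the same two algebraic facts---every derivation of the multimatrix algebra $B$ into $L^2(\aaop,\ttop)$ is inner, implemented by the averaging element $e_B=\sum_i\frac{1}{n_i}\sum_{j,k}e^{(i)}_{j,k}\otimes e^{(i)}_{k,j}$, and left multiplication by this same element is the orthogonal projection onto $L^2_B(\aaop,\ttop)$---but they deploy them differently. The paper's key move is to replace a generating set $X$ of $A$ by the conjugated set $Y=\{exe'\colon x\in X,\ e,e'\in E\}$; with respect to $\langle\cdot,\cdot\rangle_Y$ the correction $\delta=(\delta-[\cdot,(1-p)\xi])+[\cdot,(1-p)\xi]$ becomes an \emph{orthogonal} decomposition, so that $\Der_{1\otimes 1}(A,\tau)=\Der_{1\otimes 1}(B\subset A,\tau)\oplus\{[\cdot,\xi]\colon\xi\in L^2_B(\aaop,\ttop)^\perp\}$ holds as an orthogonal direct sum and the passage to closures and von Neumann dimensions is immediate, with no approximation argument at all. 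You instead run rank--nullity on the restriction map $r(\delta)=\delta|_B$: additivity of dimension along $0\to\ker r\to\overline{\Der_{1\otimes 1}(A,\tau)}\xrightarrow{r}\im r\to 0$, which (if you prefer to avoid invoking L\"uck's algebraic additivity) also follows by splitting off $(\ker r)^\perp$ and using that a bounded right-$(\aaop)''$-linear bijection of Hilbert modules preserves dimension via polar decomposition---the same fact the paper uses to show independence of the generating set. The price of forgoing orthogonality is exactly the step you flagged as the crux: identifying $\ker r$ with $\overline{\Der_{1\otimes 1}(B\subset A,\tau)}$ requires the inner correction $[\cdot,\xi_n]$ of each approximant, and this works because $\|\cdot\|_X$-convergence implies pointwise convergence on $B\subset A$, so $\xi_n\to 0$ and the corrections vanish in the limit; your explicit Hochschild formula for $\xi_n$ does satisfy $[\cdot,\xi_n]|_B=\delta_n|_B$ (the check on matrix units uses $\sum_i 1_i=1$), so this step is sound. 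What your approach buys: no clever generating set is needed, the argument is generic rank--nullity, and the identity $\sigma(B,\tau)=1-\ttop(e_B)$ comes out self-contained rather than by citation of \cite[Corollary 5.2]{CN21}, with the observation that $\ttop(e_B)$ is unambiguous because $e_B$ lies in $B\otimes B^\circ$. What the paper's approach buys: the orthogonality makes all closure and dimension bookkeeping trivial, and it yields the sharper structural statement that $\Der_{1\otimes 1}(B\subset A,\tau)$ and the inner derivations by vectors in $L^2_B(\aaop,\ttop)^\perp$ are honest orthogonal complements inside $\Der_{1\otimes 1}(A,\tau)$ for a suitable inner product.
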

\begin{proof}
Our strategy will be to show that for a carefully chosen generating set (and hence inner product), the orthogonal complement of $\Der_{1\otimes 1}(B\subset A,\tau)$ in $\Der_{1\otimes 1}(A,\tau)$ is isomorphic to $L^2_B(\aaop, \ttop)^\perp$.

Fix a finite set $X=X^*\subset A$ satisfying $A=\C\<X\>$ and fix a finite dimensional $B\subset A$. Then
    \[
        (B,\tau) = \left( \sum_{i=1}^d M_{n_i}(\C), \sum_{i=1}^d \alpha_i \tr_{n_i} \right)
    \]
for some $n_1,\ldots, n_d\in \N$ and $\alpha_1,\ldots, \alpha_d>0$ satisfying $\sum_{i=1}^d \alpha_i =1$. Let $E:=\{e_{j,k}^{(i)}\colon 1\leq i \leq d,\ 1\leq j,k\leq n_i\}$ be a family of multi-matrix units for $B$. Then
    \[
        p:= \sum_{i=1}^d \frac{1}{n_i} \sum_{j,k=1}^{n_i} e_{j,k}^{(i)} \otimes e_{k,j}^{(i)}
    \]
is the projection from $L^2(\aaop,\ttop)$ onto the $B$-central vectors $L^2_B(\aaop,\ttop)$. 

Consider the self-adjoint finite subset
    \[
        Y:=\{ e x e'\colon x\in X,\ e, e'\in E\} \subset A.
    \]
Since $1\in \text{span}(E)$, it follows that $\C\<Y\>=A$. Observe that for $\delta\in \Der(B\subset A,\tau)$,
    \[
        \sum_{y\in Y} y^*\delta(y) = \sum_{\substack{x\in X\\ e,e'\in E}} (e')^* x^* e^* \delta( exe') = \sum_{i=1}^d \sum_{j,k=1}^{n_i} (e_{j,k}^{(i)}\otimes e_{k,j}^{(i)})\sum_{\substack{x\in X\\e\in E}} x^*e^*e\delta(x),
    \]
which is $B$-central. Similarly for $\sum_{y\in Y} \delta(y)y^*$. Thus for any $\xi\in L^2(\aaop,\ttop)$ one has
    \[
        \< [\cdot, \xi], \delta\>_Y = \sum_{y\in Y} \< [y,\xi], \delta(y)\>_{\ttop} = \sum_{y\in Y} \< \xi, [y^*,\delta(y)]\>_\ttop = \sum_{y\in Y} \< p\xi, [y^*,\delta(y)]\>_\ttop = \<[\cdot, p\xi], \delta\>_Y.
    \]
Therefore $[\cdot, p\xi]$ is the projection of $[\cdot,\xi]$ onto $\Der(B\subset A,\tau)$.

Now, let $\delta\in \Der_{1\otimes 1}(A,\tau)$. Since $B$ is finite dimensional, there exists $\xi\in L^2(\aaop,\ttop)$ with $\delta|_B = [\cdot, \xi]|_B = [\cdot, (1-p)\xi]|_B$. The above arguments imply
    \[
        \delta = (\delta - [\cdot, (1-p)\xi]) + [\cdot, (1-p)\xi]
    \]
is an orthogonal decomposition with respect to $\<\cdot,\cdot\>_Y$, and we note that the first term belongs to $\Der_{1\otimes 1}(B\subset A,\tau)$. It follows that 
    \[
     \Der_{1\otimes 1}(A,\tau) = \Der_{1\otimes 1}(B\subset A,\tau) \oplus \{[\cdot,\xi]\colon \xi\in L^2_B(A\otimes A^\circ,\tau\otimes\tau^\circ)^\perp\}.
    \]
Since $L_B^2(\aaop,\ttop)^\perp\leq L_A^2(\aaop,\ttop)^\perp$, taking the von Neumann dimensions of the closures in the above orthogonal decomposition and using Lemma~\ref{lem:derivations_from_non-A-central_vectors} yields
    \[
        \sigma(A,\tau) = \sigma(B\subset A,\tau) + \dim L^2_B(A\otimes A^\circ,\tau\otimes\tau^\circ)^\perp_{(\aaop)''}.
    \]
Finally, we have
    \begin{align*}
        \dim L^2_B(A\otimes A^\circ,\tau\otimes\tau^\circ)^\perp_{(\aaop)''} = \ttop(1-p) = 1 - \sum_{i=1}^d \frac{1}{n_i} \sum_{j=1}^{n_i} \tau(e_{j,j}^{(i)})^2 = 1 - \sum_{i=1}^d \frac{\alpha_i^2}{n_i^2} = \sigma(B,\tau),
    \end{align*}
where the last equality follows from \cite[Corollary 5.2]{CN21}.
\end{proof}

\begin{rem}
Since the decomposition
    \[
        \delta = (\delta - [\cdot,(1-p)\xi]) + [\cdot, (1-p)\xi], \qquad \delta\in \Der(A,\tau)
    \]
from the above proof is unique and independent of the chosen generating set, one might hope give a more direct proof using the algebraic dimension from \cite{Luc02}. Indeed, the uniqueness of this decomposition shows $\Der(B\subset A,\tau)\cap \overline{\Der_{1\otimes 1}(A,\tau)}$ is algebraically complemented in $\overline{\Der_{1\otimes 1}(A,\tau)}$. However, the trouble is that this intersection need not equal $\overline{\Der_{1\otimes 1}(B\subset A,\tau)}$ for general $B$. The fact that it does for finite dimensional $B$ follows from our above proof.
\end{rem}

\begin{ex}\label{ex:free_graph_algebras}
Fix a finite, connected graph $\Gamma= (V,E)$  with vertex weighting $\mu\colon V\to [0,1]$ satisfying $\sum_{v\in V} \mu(v)=1$, and let $\vec{\Gamma}=(V,\vec{E})$ be the associated directed graph (cf. \cite{HN18}). Recall that the free graph von Neumann algebra $(\mc{M}(\Gamma,\mu),\tau)$ is generated by operators $X:=\{x_\epsilon\colon \epsilon\in \vec{E}\}$ and an orthogonal family of projections $Y:=\{p_v\colon v\in V\}$, which satisfy the following graph relations:
	\begin{itemize}
		\item $\tau(p_v) = \mu(v)$ for all $v\in V$;

		\item $x_{\epsilon}^* = x_{\epsilon^\text{op}}$ for all $\epsilon\in \vec{E}$;

		\item $p_v x_{\epsilon} p_w = \delta_{v=s(\epsilon)} \delta_{w=t(\epsilon)} x_{\epsilon}$ for all $v,w\in V$ and $\epsilon \in \vec{E}$.
	\end{itemize}
Moreover, if $\mu(v)\leq \sum_{w\sim v} n_{v.w} \mu(w)$ for all $v\in V$ where $n_{v,w}$ is the number of edges connecting $v$ to $w$, then there is a trace-preserving isomorphism between $M$ and the interpolated free group factor $L(\mathbb{F}_t)$ with parameter
	\[
		t:=1-\sum_{v\in V} \mu(v)^2 + \sum_{v\in V} \mu(v) \sum_{w\sim v} n_{v,w}\mu(w).
	\]
It was shown in \cite[Example B.2]{CN21} that
    \[
        \sigma(B\subset A, \tau) + \sigma(B,\tau)=t
    \]
for $A:=\C\<X\cup Y\>$ and $B:=\C\<Y\>$. Using Theorem~\ref{thm:excising_a_finite_dimensional_subalgebra} we obtain $\sigma(A,\tau)=t$.$\hfill\blacksquare$
\end{ex}

\begin{thm}\label{thm:direct_sum_formula}
Let $\{(M_i,\tau_i)\}_{i=1}^d$ be a family of tracial von Neumann algebras. For any finitely generated unital $*$-subalgebras $A_i\subset M_i$, $i=1,\ldots, d$, and any scalars $\alpha_1,\ldots, \alpha_d> 0$ satisfying $\sum_{i=1}^d \alpha_i=1$, one has
    \[
        \sigma\left( \bigoplus_{i=1}^d A_i, \sum_{i=1}^d \alpha_i \tau_i \right) = \sum_{i=1}^d \alpha_i^2 \sigma(A_i, \tau_i) + \alpha_i(1-\alpha_i).
    \]
\end{thm}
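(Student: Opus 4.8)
The plan is to apply the excision result Theorem~\ref{thm:excising_a_finite_dimensional_subalgebra} with $B$ the diagonal finite-dimensional subalgebra, and then to show that the relative free Stein dimension over $B$ splits into a weighted sum over the summands.

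Concretely, write $A=\bigoplus_{i=1}^d A_i$ and $\tau=\sum_{i=1}^d\alpha_i\tau_i$ inside $M=\bigoplus_i M_i$, let $z_i\in A_i$ be the unit of the $i$-th summand, and set $B:=\mathrm{span}\{z_1,\dots,z_d\}\cong\C^d$, a finite-dimensional $*$-subalgebra with $\tau(z_i)=\alpha_i$. Theorem~\ref{thm:excising_a_finite_dimensional_subalgebra} gives $\sigma(A,\tau)=\sigma(B\subset A,\tau)+\sigma(B,\tau)$, and specializing the final computation in its proof to $n_1=\cdots=n_d=1$ yields $\sigma(B,\tau)=1-\sum_i\alpha_i^2=\sum_i\alpha_i(1-\alpha_i)$. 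Thus the theorem reduces to the identity $\sigma(B\subset A,\tau)=\sum_i\alpha_i^2\,\sigma(A_i,\tau_i)$.

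To establish this, I would first observe that every $\delta\in\Der(B\subset A,\tau)$ is block diagonal: for $x\in A_i$ one has $x=z_ixz_i$, and since $\delta(z_i)=0$ the Leibniz rule forces $\delta(x)=z_i\cdot\delta(x)\cdot z_i=q_i\delta(x)$, where $q_i:=z_i\otimes z_i$. Hence $\delta$ restricts to a derivation $\delta_i\colon A_i\to q_iL^2(\aaop,\ttop)=L^2(A_i\otimes A_i^\circ)$, and conversely any family $(\delta_i)_i$ assembles into a single $\delta\in\Der(B\subset A,\tau)$; checking that $1\otimes1\in\dom(\delta^*)$ holds iff $1\otimes1\in\dom(\delta_i^*)$ for each $i$ (the block component of $1\otimes1$ in the $(i,i)$-corner is $q_i$) gives an identification $\Der_{1\otimes1}(B\subset A,\tau)\cong\bigoplus_i\Der_{1\otimes1}(A_i,\tau_i)$. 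Choosing a generating set $X=\bigcup_i X_i$ with $X_i=X_i^*$ generating $A_i$, the map $\phi_X$ (Lemma~\ref{lem:derivations_as_Hilbert_spaces}) then carries $\overline{\Der_{1\otimes1}(B\subset A,\tau)}$ onto the direct sum of its diagonal blocks, with every off-diagonal summand of $(\aaop)''=\bigoplus_{j,k}A_j''\bar\otimes (A_k'')^\circ$ acting trivially.

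Finally I would compute the dimension block by block. By additivity of von Neumann dimension across the central summands of $(\aaop)''$, the quantity $\sigma(B\subset A,\tau)$ equals the sum over $i$ of the dimensions of the diagonal pieces as right modules over $(A_i\otimes A_i^\circ)''$, computed with the restriction of $\ttop$. The essential point, and the step I expect to require the most care, is the trace bookkeeping: for $a,b\in A_i$ one has $(\tau\otimes\tau^\circ)(a\otimes b)=\alpha_i^2(\tau_i\otimes\tau_i^\circ)(a\otimes b)$, so the restriction of $\ttop$ to the $(i,i)$-corner is $\alpha_i^2$ times the normalized trace $\tau_i\otimes\tau_i^\circ$. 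Since von Neumann dimension is unchanged under rescaling a module's inner product but scales linearly in the trace, the $i$-th block contributes exactly $\alpha_i^2\,\sigma(A_i,\tau_i)$; summing and combining with the excision formula gives the stated formula. The main obstacle is getting this normalization right---confirming that the weight $\alpha_i^2$ arises from the trace (hence multiplies $\sigma(A_i,\tau_i)$) rather than from the inner product (which would leave the dimension unchanged).
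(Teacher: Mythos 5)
Your proposal is correct and follows essentially the same route as the paper's proof: excise the diagonal $B\cong\C^d$ via Theorem~\ref{thm:excising_a_finite_dimensional_subalgebra}, identify $\Der_{1\otimes 1}(B\subset A,\tau)\cong\bigoplus_i\Der_{1\otimes 1}(A_i,\tau_i)$ using that derivations vanishing on $B$ are block diagonal, and pick up the factor $\alpha_i^2$ from the von Neumann dimension scaling $\dim(\cdot)_{(A\otimes A^\circ)''}=\alpha_i^2\dim(\cdot)_{(A_i\otimes A_i^\circ)''}$ on the $(i,i)$-corner. Your extra care about the $\dom(\delta^*)$ condition and the trace-versus-inner-product normalization are details the paper leaves implicit, and your resolution of them is the right one.
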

\begin{proof}
Denote $M:=\bigoplus_{i=1}^d M_i$ and $\tau:=\sum_{i=1}^d \tau_i$. Let $X_i=X_i^*\subset A_i$ be a generating set and let $1_i\in A_i$ be the unit for each $i=1,\ldots, d$. Then $X=X_1\cup\cdots \cup X_d$ and $Y=\{1_1,\ldots, 1_d\}$ generate $A:=\bigoplus_{i=1}^d A_i$, and $B:=\C\<Y\>\cong \C^d$, respectively. Theorem~\ref{thm:excising_a_finite_dimensional_subalgebra} implies
    \[
        \sigma(A,\tau)=\sigma(B\subset A,\tau) + \sigma(B,\tau),
    \]
so we will compute each term separately. For $\delta\in \Der_{1\otimes 1}(B\subset A,\tau)$, observe that
    \[
        \delta(x) = \sum_{i=1}^d \delta(1_i x 1_i) = \sum_{i=1}^d 1_i \delta(x) 1_i = \sum_{i=1}^d \delta(x)(1_i\otimes 1_i).
    \]
This gives the identification
    \[
        \Der_{1\otimes 1}(B\subset A,\tau) =\bigoplus_{i=1}^d \Der_{1\otimes 1}(A_i,\tau_i),
    \]
where the direct sum is with respect to the $\<\cdot, \cdot\>_{X\cup Y}$ inner product. Thus taking closures yields
    \[
        \sigma(B\subset A,\tau) = \dim \bigoplus_{i=1}^d \overline{\Der_{1\otimes 1}(A_i,\tau_i)}_{M\otimes M^\circ} = \sum_{i=1}^d \alpha_i^2 \dim \overline{\Der_{1\otimes 1}(A_i,\tau_i)}_{M_i\otimes M_i^\circ} = \sum_{i=1}^d \alpha_i^2 \sigma(A_i,\tau_i).
    \]
Using \cite[Corollary 5.2]{CN21} we have
    \[
        \sigma(B,\tau) = 1 - \sum_{i=1}^d \alpha_i^2 = \sum_{i=1}^d \alpha_i(1-\alpha_i)
    \]
and the claimed formula for $\sigma(A,\tau)$ follows.
\end{proof}

\begin{thm}\label{thm:amplification_formula}
Let $\{(M_i,\tau_i)\}_{i=1}^d$ be a family of tracial von Neumann algebras. For any finitely generated unital $*$-subalgebras $A_i\subset M_i$, $i=1,\ldots, d$, any scalars $\alpha_1,\ldots, \alpha_d> 0$ satisfying $\sum_{i=1}^d \alpha_i=1$, and any $n_1,\ldots, n_d\in\N$, one has
    \[
        \sigma\left( \bigoplus_{i=1}^d A_i\otimes M_{n_i}(\C), \sum_{i=1}^d \alpha_i \tau_i \otimes \tr_{n_i} \right) = 1+\sum_{i=1}^d \frac{\alpha_i^2}{n_i^2} (\sigma(A_i,\tau_i)-1) .
    \]
\end{thm}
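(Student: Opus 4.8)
The plan is to first reduce to a single amplification ($d=1$) using the direct sum formula, and then to prove the single-algebra identity
\begin{equation*}
\sigma(A\otimes M_n(\C),\tau\otimes\tr_n)=1+\tfrac{1}{n^2}\bigl(\sigma(A,\tau)-1\bigr).
\end{equation*}
Indeed, granting this identity, I would apply Theorem~\ref{thm:direct_sum_formula} to the family $\{A_i\otimes M_{n_i}(\C)\}_{i}$ with traces $\tau_i\otimes\tr_{n_i}$ and weights $\alpha_i$; substituting the single-algebra identity into $\sum_i\alpha_i^2\sigma(A_i\otimes M_{n_i}(\C),\cdot)+\alpha_i(1-\alpha_i)$ and using $\sum_i\alpha_i=1$, the terms reorganize into the claimed expression. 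This last step is a routine algebraic rearrangement.

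To prove the single-algebra identity, set $N=A\otimes M_n(\C)$ and let $B=\C\otimes M_n(\C)\cong M_n(\C)$, a finite dimensional unital $*$-subalgebra of $N$. Theorem~\ref{thm:excising_a_finite_dimensional_subalgebra} gives $\sigma(N,\tau\otimes\tr_n)=\sigma(B\subset N,\tau\otimes\tr_n)+\sigma(B,\tau\otimes\tr_n)$, and $\sigma(B,\tau\otimes\tr_n)=\sigma(M_n(\C),\tr_n)=1-\tfrac{1}{n^2}$ by \cite[Corollary 5.2]{CN21}. Thus it suffices to establish
\begin{equation*}
\sigma(B\subset N,\tau\otimes\tr_n)=\tfrac{1}{n^2}\,\sigma(A,\tau). \tag{$\ast$}
\end{equation*}

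For $(\ast)$ I would identify $\Der(B\subset N,\tau\otimes\tr_n)$ with an amplification of $\Der(A,\tau)$. Writing $L^2(N\otimes N^\circ)\cong L^2(A\otimes A^\circ)\otimes L^2(M_n\otimes M_n^\circ)$, let $Z\subset L^2(M_n\otimes M_n^\circ)$ be the subspace of $M_n(\C)$-central vectors for the derivation bimodule structure; it is spanned by the vectors $\zeta^{(\alpha\beta)}=\sum_k e_{k\beta}\otimes e_{\alpha k}$, so that $\dim_\C Z=n^2$ while, as a right module over $(M_n\otimes M_n^\circ)\cong M_{n^2}(\C)$, one has $\dim Z_{(M_n\otimes M_n^\circ)}=\ttop(p)=\tfrac{1}{n^2}$, where $p$ is the projection onto the $B$-central vectors appearing in the proof of Theorem~\ref{thm:excising_a_finite_dimensional_subalgebra}. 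The key structural observation is that any $\tilde\delta\in\Der(B\subset N)$ is determined by its restriction to $A\otimes1$, that $\tilde\delta(a\otimes1)$ is forced to be $B$-central (apply $\tilde\delta$ to $(a\otimes1)(1\otimes e)=(1\otimes e)(a\otimes1)$ and use $\tilde\delta|_B=0$), and conversely that any $B$-central-valued derivation on $A\otimes1$ extends uniquely to an element of $\Der(B\subset N)$ (the $B$-centrality is exactly what makes the extension by the Leibniz rule consistent). Decomposing $\tilde\delta(a\otimes1)=\sum_{\alpha\beta}\delta_{\alpha\beta}(a)\otimes\zeta^{(\alpha\beta)}$ then produces a bijection $\tilde\delta\leftrightarrow(\delta_{\alpha\beta})_{\alpha\beta}$ with each $\delta_{\alpha\beta}\in\Der(A,\tau)$, i.e.\ a linear isomorphism $\Der(B\subset N)\cong\Der(A,\tau)\otimes Z$. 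Taking the generating set $(X\otimes1)\cup(1\otimes F)$ of $N$ (with $X=X^*$ generating $A$ and $F=F^*$ generating $M_n(\C)$) and computing $\|\tilde\delta\|_{(X\otimes1)\cup(1\otimes F)}^2=\sum_{x}\|\tilde\delta(x\otimes1)\|^2$, this isomorphism is an isometry for the Hilbert tensor product structure on $\Der(A,\tau)\otimes Z$ and intertwines the right $(N\otimes N^\circ)''=(A\otimes A^\circ)''\bar\otimes(M_n\otimes M_n^\circ)$-actions (the first tensor leg acting on $\Der(A,\tau)$, the second on $Z$).

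It then remains to check that this isomorphism carries $\Der_{1\otimes1}(A,\tau)\otimes Z$ onto $\Der_{1\otimes1}(B\subset N)$, which I expect to be the main obstacle. Concretely, I would compute $\ang{1_N\otimes1_N,\tilde\delta(a\otimes e_{pq})}_{\ttop}=\tfrac{1}{n^2}\ang{1_A\otimes1_A,\delta_{qp}(a)}_{\ttop}$, from which $1_N\otimes1_N\in\dom(\tilde\delta^*)$ if and only if $1_A\otimes1_A\in\dom(\delta_{\alpha\beta}^*)$ for every $\alpha,\beta$, the witnessing vector being assembled as $\tfrac{1}{n}\sum_{pq}\delta_{qp}^*(1_A\otimes1_A)\otimes e_{pq}$. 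Since $Z$ is finite dimensional, closures commute with the $Z$ factor, giving $\overline{\Der_{1\otimes1}(B\subset N)}\cong\overline{\Der_{1\otimes1}(A,\tau)}\otimes Z$ as right $(N\otimes N^\circ)''$-modules. Multiplicativity of von Neumann dimension over the tensor product $(A\otimes A^\circ)''\bar\otimes(M_n\otimes M_n^\circ)$ then yields
\begin{equation*}
\sigma(B\subset N,\tau\otimes\tr_n)=\dim\overline{\Der_{1\otimes1}(A,\tau)}_{(A\otimes A^\circ)''}\cdot\dim Z_{(M_n\otimes M_n^\circ)}=\sigma(A,\tau)\cdot\tfrac{1}{n^2},
\end{equation*}
establishing $(\ast)$. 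The delicate points are verifying that the $1\otimes1$-domain condition passes through the isomorphism componentwise and the bookkeeping ensuring that the module and inner-product structures genuinely factor as a Hilbert-module tensor product.
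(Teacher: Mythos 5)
Your proposal is correct and follows essentially the same route as the paper's proof: reduce to $d=1$ via Theorem~\ref{thm:direct_sum_formula}, excise $M_n(\C)$ via Theorem~\ref{thm:excising_a_finite_dimensional_subalgebra}, and then establish $\sigma(M_n(\C)\subset A\otimes M_n(\C),\tau\otimes\tr_n)=\frac{1}{n^2}\sigma(A,\tau)$ by identifying derivations vanishing on $M_n(\C)$ with amplified copies of derivations on $A$ and matching the $1\otimes 1$-adjoint condition componentwise. The only difference is organizational: where the paper compresses $\overline{\Der_{1\otimes 1}(M_n(\C)\subset M_n(A),\tau_n)}$ by the projections $e_{i,i}\otimes e_{j,j}$ and identifies each corner with $\overline{\Der_{1\otimes 1}(A,\tau)}$ via $\delta\mapsto\delta\otimes e_{a,b}$, then applies the scaling $\dim_{M_{n^2}(N)}=\frac{1}{n^4}\dim_N$, you realize the same decomposition globally as $\overline{\Der_{1\otimes 1}(A,\tau)}\otimes Z$ with $Z$ the $n^2$-dimensional space of $M_n(\C)$-central vectors (your basis vectors are exactly the images of the paper's corner maps) and invoke multiplicativity of von Neumann dimension over the tensor product instead.
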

\begin{proof}
Observe that the $d=1$ case says $\sigma(A\otimes M_n(\C),\tau\otimes \tr_n) = 1+ \frac{1}{n^2}(\sigma(A) - 1)$. If this holds, then using Theorem~\ref{thm:direct_sum_formula} we have
    \begin{align*}
        \sigma\left( \bigoplus_{i=1}^d A_i\otimes M_{n_i}(\C), \sum_{i=1}^d \alpha_i \tau_i \otimes \tr_{n_i} \right) &= \sum_{i=1}^d \alpha_i^2 \sigma( A_i\otimes M_{n_i}(\C), \tau_i\otimes\tr_{n_i}) + \alpha_i(1-\alpha_i)\\
        &= \sum_{i=1}^d \alpha_i^2 + \frac{\alpha_i^2}{n_i^2}(\sigma(A_i,\tau_i) -1) + \alpha_i - \alpha_i^2\\
        &= 1+ \sum_{i=1}^d \frac{\alpha_i^2}{n_i^2} (\sigma(A_i,\tau_i) - 1).
    \end{align*}
Thus it suffices to prove the $d=1$ case.

Let $X=X^*\subset A$ be a generating set, let $E=\{e_{i,j}\}_{i,j=1}^n$ be a family of matrix units for $M_n(\C)$, and denote $\tau_n:=\tau\otimes \tr_n$. Theorem~\ref{thm:excising_a_finite_dimensional_subalgebra} and \cite[Corollary 5.2]{CN21} imply
    \begin{align}\label{eqn:amplification_decomp}
        \sigma(A\otimes M_n(\C),\tau_n)  &= \sigma(M_n(\C) \subset M_n(A),\tau_n) + \sigma(M_n(\C),\tr_n)\nonumber\\
            &=\sigma(M_n(\C) \subset M_n(A),\tau_n) + 1 - \frac{1}{n^2}.
    \end{align}
Now, observe that since $(M_n(A)\otimes M_n(A)^\circ)'' \cong M_{n^2}((A\otimes A^\circ)'')$ and $M_n(\C)\otimes M_n(C)^\circ \subset (A\otimes A^\circ)'$ we have
    \begin{align*}
        \sigma(M_n(\C)\subset M_n(A),\tau_n) &=\dim \overline{\Der_{1\otimes 1}(M_n(\C)\subset M_n(A),\tau_n)}_{W^*(M_n(A)\otimes M_n(A)^\circ)}\\ 
            &= \frac{1}{n^4} \dim \overline{\Der_{1\otimes 1}(M_n(\C)\subset M_n(A),\tau_n)}_{(A\otimes A^\circ)''}\\
            &= \frac{1}{n^4} \dim \bigoplus_{i,j=1}^n  \overline{\Der_{1\otimes 1}(M_n(\C)\subset M_n(A),\tau_n)}(e_{i,i}\otimes e_{j,j})_{(A\otimes A^\circ)''}\\
            &= \frac{1}{n^4} \sum_{i,j=1}^n \dim  \overline{\Der_{1\otimes 1}(M_n(\C)\subset M_n(A),\tau_n)}(e_{i,i}\otimes e_{j,j})_{(A\otimes A^\circ)''}.
    \end{align*}
We will show $\overline{\Der_{1\otimes 1}(M_n(\C)\subset M_n(A),\tau_n)}(e_{i,i}\otimes e_{j,j})\cong \overline{\Der_{1\otimes 1}(A,\tau)}$ for each $i,j=1,\ldots, n$, so that
    \begin{align}\label{eqn:amplification_over_matrices}
        \sigma(M_n(\C)\subset M_n(A),\tau_n) = \frac{1}{n^4} \sum_{i,j=1}^n \dim \overline{\Der_{1\otimes 1}(A,\tau)}_{(A\otimes A^\circ)''} = \frac{1}{n^2} \sigma(A,\tau).
    \end{align}
In light of (\ref{eqn:amplification_decomp}), this will complete the proof.

Given $\delta\in \Der_{1\otimes 1}(A,\tau)$ and $1\leq a,b\leq n$, define $\delta\otimes e_{a,b}\in \Der_{1\otimes 1}(M_n(\C)\subset M_n(A),\tau_n)(e_{a,a}\otimes e_{b,b})$ by
	\[
		[\delta\otimes e_{a,b}](x\otimes e_{i,j}):= \delta(x)\otimes e_{i,a}\otimes e_{b,j}.
	\]
(Here we are identifying $L^2(M_n(A)\otimes M_n(A)^\circ,\tau_n\otimes\tau_n^\circ)\cong L^2(A\otimes A^\circ,\tau\otimes\tau^\circ)\otimes M_n(\C)\otimes M_n(\C)^\circ$.) This vanishes on $M_n(\C)$ since $\delta(1)=0$, and is easily checked to satisfy the Leibniz rule as well as
	\begin{align*}
		[\delta\otimes e_{a,b}]^*(1\otimes 1\otimes 1\otimes 1) = \frac1n \delta^*(1\otimes 1)\otimes e_{a,b}.
	\end{align*}
For $\Delta\in \Der_{1\otimes 1}(M_n(\C)\subset M_n(A),\tau_n)(e_{a,a}\otimes e_{b,b})$, define
    \[
        \delta(x) := (1\otimes \Tr)\otimes (1\otimes \Tr)^\circ\left[ \Delta(x\otimes e_{a,b})\right].
    \]
Then $\delta\in \Der_{1\otimes 1}(A,\tau)$ with
    \[
        \delta^*(1\otimes 1) = n (1\otimes \Tr)[(1\otimes e_{b,a})\Delta^*(1\otimes 1\otimes 1\otimes 1)].
    \]
Observe that for any $1\leq i,j\leq n$ we have
    \begin{align*}
        \delta(x)&= (1\otimes \Tr)\otimes (1\otimes \Tr)^\circ\left[ (e_{a,i}\otimes e_{j,b}) \Delta(x\otimes e_{i,j})\right]\\
            &=(1\otimes \Tr)\otimes (1\otimes \Tr)^\circ\left[ \Delta(x\otimes e_{i,j})( e_{a,i}\otimes e_{j,b})\right].
    \end{align*}
Since $\Delta(x\otimes e_{i,j}) = (e_{i,i}\otimes e_{j,j})\Delta(x\otimes e_{i,j}) (e_{a,a}\otimes e_{b,b})$, it follows that
    \begin{align*}
        \Delta(x\otimes e_{i,j}) &= \sum_{c,d,k,\ell=1}^n (1\otimes \Tr)\otimes (1\otimes \Tr)^\circ[ \Delta(x\otimes e_{i,j})(e_{c,k}\otimes e_{\ell,d})] (e_{k,c}\otimes e_{d,\ell})\\
            &=(1\otimes \Tr)\otimes (1\otimes \Tr)^\circ[ \Delta(x\otimes e_{i,j})(e_{a,i}\otimes e_{j,b})] (e_{i,a}\otimes e_{b,j})\\
            &= \delta(x)\otimes e_{i,a}\otimes e_{b,j}= [\delta\otimes e_{a,b}](x\otimes e_{i,j}).
    \end{align*}
Thus $\Delta=\delta\otimes e_{a,b}$, and so the right $(A\otimes A^\circ)''$-module map $\Der_{1\otimes 1}(A,\tau)\ni\delta\mapsto \delta\otimes  e_{a,b}\in \Der_{1\otimes 1}(M_n(\C)\subset M_n(A),\tau_n)$ is surjective. One also readily computes that
    \[
        \|\delta\otimes e_{a,b}\|_{(X\otimes 1, 1\otimes E)}  = \frac{1}{\sqrt{n}} \|\delta\|_X.
    \]
Hence this map extends to a right $(A\otimes A^\circ)''$-module isomorphism $\overline{\Der_{1\otimes 1}(M_n(\C)\subset M_n(A),\tau_n)}(e_{a,a}\otimes e_{b,b})\cong \overline{\Der_{1\otimes 1}(A,\tau)}$.
\end{proof}

\begin{cor}\label{cor:tensor_against_finite_dimensional_formula}
Let $(M,\tau)$ and $(B,\phi)$ be tracial von Neumann algebras with $B$ finite dimensional. For any finitely generated unital $*$-subalgebra $A\subset M$, one has
    \[
        \sigma(A\otimes B,\tau\otimes\phi) = \sigma(A,\tau) + \sigma(B,\phi) - \sigma(A,\tau)\sigma(B,\phi).
    \]
\end{cor}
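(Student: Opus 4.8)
The plan is to reduce this statement to the amplification formula of Theorem~\ref{thm:amplification_formula} by exploiting the Artin--Wedderburn structure of the finite dimensional algebra $B$. First I would write $(B,\phi) \cong \paren{\bigoplus_{i=1}^d M_{n_i}(\C), \sum_{i=1}^d \alpha_i \tr_{n_i}}$ for suitable $n_i \in \N$ and weights $\alpha_i > 0$ with $\sum_i \alpha_i = 1$. Tensoring with $A$ and distributing over the direct sum gives a trace-preserving isomorphism $(A\otimes B, \tau\otimes\phi) \cong \paren{\bigoplus_{i=1}^d A\otimes M_{n_i}(\C), \sum_{i=1}^d \alpha_i\,\tau\otimes\tr_{n_i}}$, which is exactly the form to which Theorem~\ref{thm:amplification_formula} applies (taking each $A_i = A$ and $\tau_i = \tau$).

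Applying that theorem then yields
\[
    \sigma(A\otimes B, \tau\otimes\phi) = 1 + \sum_{i=1}^d \frac{\alpha_i^2}{n_i^2}\paren{\sigma(A,\tau)-1}.
\]
The only remaining ingredient is to recognize the coefficient $\sum_i \alpha_i^2/n_i^2$ in terms of $\sigma(B,\phi)$. By \cite[Corollary 5.2]{CN21} (the same computation appearing at the end of the proof of Theorem~\ref{thm:excising_a_finite_dimensional_subalgebra}), one has $\sigma(B,\phi) = 1 - \sum_{i=1}^d \alpha_i^2/n_i^2$, so that $\sum_i \alpha_i^2/n_i^2 = 1 - \sigma(B,\phi)$. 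Substituting this in and expanding,
\[
    \sigma(A\otimes B, \tau\otimes\phi) = 1 + \paren{1-\sigma(B,\phi)}\paren{\sigma(A,\tau)-1} = \sigma(A,\tau) + \sigma(B,\phi) - \sigma(A,\tau)\sigma(B,\phi),
\]
which is the claimed identity.

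I do not anticipate any genuine obstacle here: once the finite dimensional algebra is decomposed into matrix blocks, the entire content is packaged in Theorem~\ref{thm:amplification_formula}, and the corollary amounts to rewriting the resulting coefficient via the known value of $\sigma(B,\phi)$. The only point requiring mild care is verifying that the isomorphism $A\otimes B \cong \bigoplus_i A\otimes M_{n_i}(\C)$ respects the traces with the correct weights $\alpha_i$, but this is immediate from the block decomposition of $\phi$.
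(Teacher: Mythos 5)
Your proof is correct and is essentially the paper's own argument: the paper decomposes $B$ into matrix blocks and then combines Theorem~\ref{thm:direct_sum_formula} with the $d=1$ case of Theorem~\ref{thm:amplification_formula}, which is exactly what your single application of the general ($d$-fold) Theorem~\ref{thm:amplification_formula} packages together, since that theorem's proof is itself the direct sum formula plus the $d=1$ case. The concluding substitution $\sigma(B,\phi) = 1 - \sum_{i=1}^d \alpha_i^2/n_i^2$ via \cite[Corollary 5.2]{CN21} and the resulting algebra are identical in both arguments.
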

\begin{proof}
$B$ is necessarily a multimatrix algebra:
    \[
        (B,\phi)=\left( \bigoplus_{i=1}^d M_{n_i}(\C), \sum_{i=1}^d \alpha_i \tr_{n_i} \right)
    \]
for some scalars $\alpha_1,\ldots, \alpha_d\geq 0$ satisfying $\sum_{i=1}^d \alpha_i=1$. Hence $\sigma(B,\phi) = 1 - \sum_{i=1}^d \frac{\alpha_i^2}{n_i^2}$, and using Theorems~\ref{thm:direct_sum_formula} and \ref{thm:amplification_formula} we have
    \begin{align*}
        \sigma(A\otimes B,\tau\otimes\phi) &= \sum_{i=1}^d \alpha_i^2 \sigma(A\otimes M_{n_i}(\C),\tau\otimes\tr_{n_i}) + \alpha_i(1-\alpha_i)\\
            &= \sum_{i=1}^d \alpha_i^2+\frac{\alpha_i^2}{n_i^2}( \sigma(A,\tau)-1) + \alpha_i -\alpha_i^2 \\
            &= (1-\sigma(B,\phi))(\sigma(A,\tau)-1) +1= \sigma(A,\tau)+\sigma(B,\phi)-\sigma(A,\tau)\sigma(B,\phi).\qedhere
    \end{align*}
\end{proof}

\section{Algebraic Relations}\label{sec:algebraic_relations}

In this section, we explore how algebraic relations among the generators of $A$ can yield useful upper bounds for the free Stein dimension.
In some cases these bounds are dependent on the size of the generating set, and so the results will be stated in terms of generating sets $X=X^*\subset A$ rather than $A$ itself.

We begin with a technical lemma, which in essence captures the fact that algebraic relations in $B\<X\>$ must have derivatives orthogonal to the images of the generators under any derivation in $\Der_{1\otimes 1}(B\subset B\<X\>,\tau)$.
\begin{lem}\label{lem:weak_convergence_upper_bound}
Let $(M,\tau)$ be a tracial von Neumann algebra with unital $*$-subalgebra $B\subset M$ and finite subset $X=X^*\subset M$ so that $B\<X\>''=M$. Suppose $(P^{(i)})_{i\in I} =( p_1^{(i)},\ldots, p_d^{(i)})_{i\in I}\subset B\<T_X\>^d$ is a net such that $(P^{(i)}(X), (\J_{X:B} P^{(i)})(X))_{i\in I}$ converges weakly to $(0,H) \in L^2( M)^{\oplus d}\oplus M_{d\times |X|}(L^2(M\bar\otimes M^\circ,\tau\otimes \tau^\circ))$. Let $\H$ be the closure of 
    \[
        \left\{\left( \sum_{j=1}^d J_\ttop m_j[H]_{j,x}\right)_{x\in X}\colon m_1,\ldots, m_d\in \mmop \right\} \subset L^2(\mmop,\tau\otimes\tau^\circ)^{X}.
    \]
Then $\H_\mmop \leq L^2(\mmop,\tau\otimes\tau^\circ)^{X}_{\mmop}$ (under the diagonal action) and $\sigma(B\subset B\<X\>, \tau) \leq \dim \H^\perp_\mmop$.
\end{lem}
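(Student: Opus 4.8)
The plan is to realise $\mathcal H$ as a right $\mmop$-submodule generated by the ``rows'' $H_j:=([H]_{j,x})_{x\in X}$, to show each $H_j$ is orthogonal to the closed module $\overline{\{(J_{\ttop}\delta(x))_{x}:\delta\in\Der_{1\otimes1}(B\subset A,\tau)\}}$ (writing $A:=B\ang{X}$), and then to transfer that orthogonality through $J_{\ttop}$ to the \emph{untwisted} module $\phi_X(\overline{\Der_{1\otimes1}(B\subset A,\tau)})$, which computes $\sigma$ via Lemma~\ref{lem:derivations_as_Hilbert_spaces}. For the module claim I would invoke the Tomita identity $\zeta\cdot n=J_{\ttop}n^*J_{\ttop}\zeta$ for the right $\mmop$-action: one computes $(J_{\ttop}m_j[H]_{j,x})\cdot n=J_{\ttop}(n^*m_j)[H]_{j,x}$, so the generating set of $\mathcal H$ is already invariant under the diagonal right action (replace $m_j$ by $n^*m_j$); since this action is bounded, the closure $\mathcal H$ is a right $\mmop$-submodule, giving $\mathcal H_\mmop\le L^2(\mmop,\ttop)^X_\mmop$.

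Next I establish $H_j\perp(J_{\ttop}\delta(x))_x$. For $\delta\in\Der_{1\otimes1}(B\subset A,\tau)$ and $p\in B\ang{T_X}$, I combine $\delta(p(X))=\sum_x(\partial_{x:B}p)(X)\#\delta(x)$ (as in the proof of Lemma~\ref{lem:derivations_as_Hilbert_spaces}) with $\langle\delta(p(X)),1\otimes1\rangle_\ttop=\langle p(X),\delta^*(1\otimes1)\rangle_\tau$. Writing $\Xi_x:=(\partial_{x:B}p)(X)\in\aaop$, using that $\#$ is the left $\mmop$-action, that $\Xi_x^*(1\otimes1)=J_{\ttop}\Xi_x$, and the anti-unitarity of $J_{\ttop}$, the left-hand side rearranges to $\sum_x\langle\Xi_x,J_{\ttop}\delta(x)\rangle_\ttop$, so
\[
\sum_{x\in X}\langle(\partial_{x:B}p)(X),\,J_{\ttop}\delta(x)\rangle_\ttop=\langle p(X),\,\delta^*(1\otimes1)\rangle_\tau .
\]
Applying this with $p=p_j^{(i)}$ and passing to the limit along $i\in I$, the right side tends to $0$ (since $p_j^{(i)}(X)\to0$ weakly and $\delta^*(1\otimes1)$ is fixed) while the left side tends to $\sum_x\langle[H]_{j,x},J_{\ttop}\delta(x)\rangle_\ttop$ (since the Jacobian entries $(\partial_{x:B}p_j^{(i)})(X)$ converge weakly to $[H]_{j,x}$ and each $J_{\ttop}\delta(x)$ is fixed). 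Hence $\langle H_j,(J_{\ttop}\delta(x))_x\rangle_\ttop=0$ for every $j$ and every $\delta\in\Der_{1\otimes1}(B\subset A,\tau)$.

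Finally I transfer this to $\phi_X(\Der_{1\otimes1})$. By density (Kaplansky) it suffices to test orthogonality against generators of $\mathcal H$ with each $m_j$ in $\aaop$, so that $\delta\cdot m_j\in\Der_{1\otimes1}(B\subset A,\tau)$. Using anti-unitarity of $J_{\ttop}$ and the Tomita identity $m^*J_{\ttop}\zeta=J_{\ttop}(\zeta\cdot m)$, a direct computation gives
\[
\langle\phi_X(\delta),\,(J_{\ttop}m_j[H]_{j,x})_x\rangle_\ttop=\sum_{x\in X}\langle[H]_{j,x},\,J_{\ttop}(\delta(x)\cdot m_j)\rangle_\ttop=\langle H_j,\,(J_{\ttop}(\delta\cdot m_j)(x))_x\rangle_\ttop=0,
\]
the last equality by the previous paragraph applied to $\delta\cdot m_j$. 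Summing over $j$ shows $\phi_X(\delta)\perp\mathcal H$; by continuity $\phi_X(\overline{\Der_{1\otimes1}(B\subset A,\tau)})\subseteq\mathcal H^\perp$. Since $\mathcal H^\perp$ is again a right $\mmop$-module and $(A\otimes A^\circ)''=\mmop$ (as $B\ang X''=M$), taking von Neumann dimensions and using Lemma~\ref{lem:derivations_as_Hilbert_spaces} and Definition~\ref{def:fsd} yields $\sigma(B\subset B\ang X,\tau)=\dim\phi_X(\overline{\Der_{1\otimes1}})_\mmop\le\dim\mathcal H^\perp_\mmop$.

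I expect the main obstacle to be the modular-conjugation bookkeeping. The free Stein dimension is naturally computed by the $J_{\ttop}$-\emph{twisted} module $\{(J_{\ttop}\delta(x))_x\}$ of \cite{CN21}, yet the weak-limit/integration-by-parts step only controls the pairing of $H_j$ against exactly that twisted module, whereas $\mathcal H$ is built from the $J_{\ttop}$-images of the left-multiples $m_j[H]_{j,x}$. The key point—and the thing easy to get wrong—is that the $J_{\ttop}$ in the definition of $\mathcal H$ is precisely what converts a left multiplication into a right one via $m^*J_{\ttop}\zeta=J_{\ttop}(\zeta\cdot m)$, so that $\delta\cdot m_j$ lands back in $\Der_{1\otimes1}$ and one returns to the controlled module; keeping $J_{\ttop}$ distinct from the flip-conjugation $J_{\HS}$ (hence from the $\dagger$-operation) and tracking left versus right actions throughout is where the care is needed.
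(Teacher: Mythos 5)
Your proof is correct and follows essentially the same route as the paper: the paper establishes the identical orthogonality $\phi_X(\Der_{1\otimes 1}(B\subset B\langle X\rangle,\tau))\perp \H$ in one chain of equalities, pairing $\delta$ directly against the generators with $m_j\in\aaop$ and using $\langle \delta(p_j^{(i)}(X)),m_j^*\rangle_{\ttop}=\langle p_j^{(i)}(X),\delta^*(m_j^*)\rangle_\tau\to 0$. Your two-step reorganization (first the case $m_j=1\otimes 1$ against the $J_{\ttop}$-twisted module, then bootstrapping via $\delta\cdot m_j\in\Der_{1\otimes 1}(B\subset B\langle X\rangle,\tau)$ and the Tomita identity) is the same computation in disguise, since $(\delta\cdot m_j)^*(1\otimes 1)=\delta^*(m_j^*)$.
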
 
\begin{proof}
Note that the assumed convergence implies $(\partial_{x : B} p_j^{(i)})(X)\to [H]_{j,x}$ weakly for each $j=1,\ldots, d$ and $x\in X$. Now, for $\delta\in \Der_{1\otimes 1}(B\subset B\<X\>,\tau)$ and $(m_1,\ldots, m_d)\in (B\<X\>\otimes B\<X\>^\circ)^d$ we have
	\begin{align*}
	    \sum_{x\in X} \< \delta(x),  \sum_{j=1}^d J_\ttop m_j [H]_{j,x}\>_\ttop &= \lim_{i\to\infty} \sum_{x\in X} \< \delta(x),  \sum_{j=1}^d J_{\ttop} m_j (\partial_{x : B} p_j^{(i)})(X)\>_\ttop\\
			& = \lim_{i\to\infty}  \sum_{j=1} \sum_{x\in X} \<  (\partial_{x : B} p_j^{(i)})(X)\delta(x), m_j^*\>_\ttop\\
			& = \lim_{i\to\infty}  \sum_{j=1} \<  \delta(p_j^{(i)}(X)), m_j^*\>_\ttop\\
			&=\lim_{i\to\infty}  \sum_{j=1} \<  p_j^{(i)}(X), \delta^*(m_j^*)\>_\ttop=0.
	\end{align*}
Thus with $\phi_X$ as in Lemma~\ref{lem:derivations_as_Hilbert_spaces}, we have
	\[
	    \phi_X(\Der_{1\otimes 1}(B\subset B\<X\>,\tau))_{\mmop} \leq \H^\perp_{\mmop},
	\]
and the result follows.
\end{proof}

Observe that if $H$ in the previous theorem belongs to $M_{d\times |X|}(M\bar\otimes M^\circ)$, then for $m_1,\ldots, m_d\in \mmop$ and $x\in X$ we have
    \[
        \left(\sum_{j=1}^d J_\ttop m_j[H]_{j,x}\right)_{x\in X} = \left(\sum_{j=1}^d [H]_{j,x}^* m_j^* = \sum_{j=1}^d [H^*]_{x,j} m_j^* \right)_{x\in X} = H^* (m_1^*,\ldots, m_d^*)^T.
    \]
Thus $\H=\overline{\text{ran}{H^*}}$ and $\H^\perp = \ker{\H}$. These observations lead to the following theorem.

\begin{thm}\label{thm:upper_bound_by_algebraic_relations}
Let $(M,\tau)$ be a tracial von Neumann algebra with unital $*$-subalgebra $B\subset M$ and finite subset $X=X^*\subset M$ so that $B\<X\>''=M$. Suppose $(P^{(i)})_{i\in I}\subset B\<T_X\>^d$ is a net such that $(P^{(i)}(X))_{i\in I}$ converges weakly to zero while $( ( \J_{X:B} P^{(i)})(X))_{i\in I}$ converges coordinate-wise in the weak operator topology to some $H\in M_{d\times |X|}(M\bar\otimes M^\circ)$. Then
	\[
		\sigma(B\subset B\<X\>,\tau) \leq \dim (\ker{H})_{M\bar\otimes M^\circ},
	\]
where $\ker(H)_{\mmop}\leq L^2(\mmop,\ttop)^{X}_{\mmop}$ under diagonal action. In particular, if $P(X)=0$ for some $P\in B\<T_X\>^d$, then
    \[
        \sigma(B\subset B\<X\>,\tau) \leq \dim (\ker  (\J_{X:B} P)(X))_{\mmop}.
    \]
\end{thm}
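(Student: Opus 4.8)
The plan is to deduce the theorem from Lemma~\ref{lem:weak_convergence_upper_bound} together with the observation recorded immediately before it, so that the only genuinely new point is converting the hypothesized weak operator convergence of the free Jacobians into the weak $L^2$ convergence demanded by the lemma. Everything else is a matter of identifying the abstract space $\H$ from the lemma with $\ker H$.

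First I would observe that coordinate-wise WOT convergence of the operators $(\partial_{x:B}p_j^{(i)})(X)\in\mmop$ to $[H]_{j,x}$ automatically yields weak convergence of the corresponding \emph{vectors} in $L^2(\mmop,\ttop)$. Indeed, in the standard representation of $\mmop$ on $L^2(\mmop,\ttop)$ the GNS vector of an element $a\in\mmop$ is $a\,\widehat{1\otimes 1}$, so evaluating the definition of WOT convergence at the cyclic vector $\widehat{1\otimes 1}$ shows that for every $\eta\in L^2(\mmop,\ttop)$,
\[
\langle (\partial_{x:B}p_j^{(i)})(X), \eta\rangle_\ttop = \langle (\partial_{x:B}p_j^{(i)})(X)\,\widehat{1\otimes 1}, \eta\rangle_\ttop \longrightarrow \langle [H]_{j,x}\,\widehat{1\otimes 1}, \eta\rangle_\ttop = \langle [H]_{j,x}, \eta\rangle_\ttop.
\]
This is precisely coordinate-wise weak convergence $(\J_{X:B}P^{(i)})(X)\to H$ in $M_{d\times|X|}(L^2(\mmop,\ttop))$. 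Combined with the assumed weak convergence $P^{(i)}(X)\to 0$ in $L^2(M)^{\oplus d}$, it shows that the pair $(P^{(i)}(X),(\J_{X:B}P^{(i)})(X))$ converges weakly to $(0,H)$, so the hypotheses of Lemma~\ref{lem:weak_convergence_upper_bound} are met.

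Next I would invoke the lemma to obtain $\sigma(B\subset B\<X\>,\tau)\leq \dim\H^\perp_\mmop$, where $\H$ is the closure of the span of the vectors $\paren{\sum_j J_\ttop m_j[H]_{j,x}}_{x\in X}$. Since by hypothesis $H\in M_{d\times|X|}(\mmop)$, the observation preceding the theorem rewrites each such vector as $H^*(m_1^*,\dots,m_d^*)^T$; as $(\mmop)^d$ is dense in $L^2(\mmop,\ttop)^d$ and $H^*$ is bounded, this gives $\H=\overline{\operatorname{ran}H^*}$, whence $\H^\perp=\ker H$ by the standard fact that the orthogonal complement of the range of a bounded operator is the kernel of its adjoint. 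Both $\H^\perp$ and $\ker H$ are right $\mmop$-submodules under the diagonal action—$\ker H$ because left multiplication by the matrix $H$ over $\mmop$ commutes with the diagonal right action—so $\dim\H^\perp_\mmop=\dim(\ker H)_\mmop$, which is the claimed inequality.

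Finally, for the ``in particular'' clause I would apply the inequality to the constant net $P^{(i)}=P$: the relation $P(X)=0$ makes the first coordinate converge (trivially) weakly to $0$, while the second coordinate is the constant $(\J_{X:B}P)(X)$, which lies in $M_{d\times|X|}(B\<X\>\otimes B\<X\>^\circ)\subseteq M_{d\times|X|}(\mmop)$ and hence converges in WOT to $H:=(\J_{X:B}P)(X)$; the general bound then specializes to $\sigma(B\subset B\<X\>,\tau)\leq\dim(\ker(\J_{X:B}P)(X))_\mmop$. Since every step after the topology conversion is either a direct citation of the lemma and the preceding observation or a routine module computation, I expect the sole (minor) obstacle to be that conversion—specifically, one must be careful that the WOT is taken in the standard representation on $L^2(\mmop,\ttop)$, so that testing against $\widehat{1\otimes 1}$ is legitimate and produces exactly the weak $L^2$ convergence the lemma requires.
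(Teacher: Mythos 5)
Your proposal is correct and takes essentially the same route as the paper: convert the hypothesized convergence into weak convergence of the pair $(P^{(i)}(X), (\J_{X:B}P^{(i)})(X))$ to $(0,H)$, apply Lemma~\ref{lem:weak_convergence_upper_bound} together with the observation preceding the theorem identifying $\H^\perp$ with $\ker H$, and specialize to a constant net for the final claim. The only difference is that you make explicit the cyclic-vector argument converting coordinate-wise WOT convergence into weak $L^2$ convergence, which the paper compresses into the single sentence ``the assumed convergence implies\dots''.
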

\begin{proof}
The assumed convergence implies $(P^{(i)}, ( \J_{X:B} P^{(i)})(X))_{i\in I}$ converges weakly to $(0, H)$. The first inequality then follows from the discussion preceding the theorem. The second inequality is simply the special case where the net is constant.
\end{proof}

Applying this to commutators leads us to the following corollary.
\begin{cor}\label{cor:diffuse_center}
Let $(M,\tau)$ be a tracial von Neumann algebra with a finitely generated unital $*$-subalgebra $A\subset M$. If $A$ has a diffuse central element, then $\sigma(A,\tau)=1$.
\end{cor}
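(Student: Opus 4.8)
The plan is to establish the two inequalities $\sigma(A,\tau)\ge 1$ and $\sigma(A,\tau)\le 1$ separately, combining a lower bound coming from inner derivations with an upper bound coming from the commutation relations and Theorem~\ref{thm:upper_bound_by_algebraic_relations}.

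For the lower bound I first note that a central element is automatically normal, since $c$ commutes with $c^*\in A$; hence $W^*(c)$ is a diffuse abelian subalgebra of $Z(A'')$. I would then argue that $A''$ itself is diffuse: a minimal projection $p\in A''$ would have central support $z(p)$ a minimal projection of $Z(A'')$, contradicting diffuseness of $W^*(c)\subseteq Z(A'')$. With $A''$ diffuse one has $L^2_A(\aaop,\ttop)=\{0\}$, so Lemma~\ref{lem:derivations_from_non-A-central_vectors} (applied with $\H=L^2(\aaop,\ttop)$) gives $\dim\overline{\InnDer(A,\tau)}_{(\aaop)''}=1$. Since $\InnDer(A,\tau)\subseteq\Der_{1\otimes1}(A,\tau)$ and von Neumann dimension is monotone under inclusion of submodules, $\sigma(A,\tau)\ge 1$.

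For the upper bound I would first reduce to the case $c=c^*$: writing $c=\text{Re}(c)+i\,\text{Im}(c)$ with both parts central and self-adjoint in $A$, the joint distribution $\mu_c$ of $(\text{Re}(c),\text{Im}(c))$ on $\C$ is non-atomic because $W^*(c)$ is diffuse, and for all but countably many real-linear functionals the pushforward of $\mu_c$ is still non-atomic; such a functional produces a self-adjoint diffuse central element of $A$, which I rename $c$. The crucial move is to choose a self-adjoint generating set $X=\{c,x_1,\dots,x_m\}$ of $A$ that \emph{contains $c$ as a generator}, so that $\partial_{x_j}c=0$ and $\partial_c c=1\otimes 1$. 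Then $R_i:=t_ct_{x_i}-t_{x_i}t_c\in\C\<T_X\>$ satisfies $R_i(X)=[c,x_i]=0$, and computing the free difference quotients gives the Jacobian entries $(\partial_c R_i)(X)=1\otimes x_i-x_i\otimes 1$ and $(\partial_{x_j}R_i)(X)=\delta_{ij}(c\otimes 1-1\otimes c)$, with all remaining entries zero. Applying Theorem~\ref{thm:upper_bound_by_algebraic_relations} to $P=(R_1,\dots,R_m)$ reduces the problem to bounding $\dim(\ker H)_{\mmop}$, where $H=(\J_X P)(X)$.

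Writing $z:=c\otimes 1-1\otimes c$, the kernel condition reads $z\,v_i=-(1\otimes x_i-x_i\otimes 1)\,v_c$ for $i=1,\dots,m$, where $v=(v_c,v_1,\dots,v_m)$ ranges over $L^2(\mmop,\ttop)^{X}$ under the diagonal right action. Here diffuseness does the essential work: since $c=c^*$ has non-atomic distribution $\mu_c$, the element $z$ corresponds to $(s,t)\mapsto s-t$ in $W^*(c)\bar\otimes W^*(c)^\circ\cong L^\infty(\mu_c\otimes\mu_c)$, which vanishes only on the diagonal, a $\mu_c\otimes\mu_c$-null set; hence the spectral projection $\1_{\{0\}}(z)$ is zero and left multiplication by $z$ on $L^2(\mmop,\ttop)$ is injective. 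Consequently the coordinate projection $v\mapsto v_c$ restricts to an \emph{injective} right $(\aaop)''$-module map on $\ker H$ (if $v_c=0$ then $z v_i=0$ forces each $v_i=0$), so by additivity of von Neumann dimension $\dim(\ker H)_{\mmop}=\dim\overline{\text{ran}}\le\dim L^2(\mmop,\ttop)_{\mmop}=1$. Thus $\sigma(A,\tau)\le 1$, and with the lower bound $\sigma(A,\tau)=1$.

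I expect the two delicate points to be the reduction to a self-adjoint diffuse central element (the measure-theoretic ``generic direction'' argument) and the identification of $\ker H$ as embedding into a single copy of $L^2(\mmop,\ttop)$ via injectivity of left multiplication by $z$. The algebraic Jacobian computation is routine once $c$ is taken as a generator; the conceptual content lies entirely in the fact that $c\otimes 1-1\otimes c$ is injective precisely because $c$ is diffuse.
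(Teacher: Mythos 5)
Your proof is correct and follows essentially the same route as the paper: the paper likewise takes a self-adjoint generating set containing the diffuse central element $x_0$, applies Theorem~\ref{thm:upper_bound_by_algebraic_relations} to the commutator relations $(t_{x_0}t_x - t_xt_{x_0})_{x\neq x_0}$, and exploits that $\ker(x_0\otimes 1 - 1\otimes x_0)=\{0\}$ by diffuseness---the only cosmetic differences being that the paper bounds $\dim(\ker H)_\mmop$ by showing $(\J_XP)(X)$ has dense range and invoking rank--nullity where you embed $\ker H$ into a single copy of $L^2(\mmop,\ttop)$ via the $c$-coordinate, and it obtains the lower bound by citing \cite[Theorem 3.8]{CN21} rather than your (also valid) inner-derivation argument. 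Note that your reduction to self-adjoint $c$ is unnecessary: a central element of $A$ commutes with its adjoint, hence is normal, so the functional-calculus argument applies verbatim to $c\otimes 1 - 1\otimes c$ without the generic-direction step.
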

\begin{proof}
Since $A''$ contains a diffuse element, we have $\sigma(A,\tau)\geq 1$ by \cite[Theorem 3.8]{CN21}. Let $X=X^*\subset A$ be a generating set which contains a diffuse central element $x_0$. Consider $P:=(t_{x_0}t_x - t_x t_{x_0})_{x\neq x_0}$. Then $P(X)=0$ by the centrality of $x_0$, and so by Theorem~\ref{thm:upper_bound_by_algebraic_relations} it suffices to show $\dim (\ker (\J_{X}P)(X))_{\mmop}\leq 1$. We will show $( \J_{X}P)(X)$ has dense range, and so the needed bound will therefore follow by the rank-nullity theorem. Given any $\eta\in L^2(\mmop,\ttop)^{X\setminus \{x_0\}}$ and $\epsilon>0$, and define $\xi\in L^2(\mmop,\ttop)^X$ by
    \[
        \xi_x:=\begin{cases} 0 & \text{if }x=x_0\\ [x_0,1\otimes 1]^{-1} 1_{|[x_0,1\otimes 1]|>\epsilon} \eta_x & \text{otherwise} \end{cases}.
    \]
Then $(\J_X P)(X) \xi = 1_{|[x_0,1\otimes 1]|>\epsilon} \eta$. Since $x_0$ is diffuse, $\ker([x_0,1\otimes 1])=\{0\}$ and so $1_{|[x_0,1\otimes 1]|>\epsilon}$ converges strongly to $1$ as $\epsilon\to 0$. Therefore $(\J_X P)(X)$ has dense range.
\end{proof}

\begin{ex}
For $N\geq 3$, let $(\mc{L}(\mathbb{F}O_N),\tau)$ be an orthogonal free quantum group factor with its unique trace and let $U=\{u_{i,j}\colon 1\leq i,j\leq N\}$ be the standard generators.
If $F$ is the tuple of relations induced by $\sum_{i,j=1}^N u_{i,j}\otimes E_{i,j}$ being a unitary matrix, then
	\[
		\dim \ker(\J_U F)_{\mc{L}(\mathbb{F}O_N)\bar\otimes \mc{L}(\mathbb{F}O_N)^\circ} = N^2 - \text{rank} (\J_U F) = 1,
	\]
by \cite[Lemma 4.1]{BV18}. Hence $\sigma(U,\tau)\leq 1$ by Theorem~\ref{thm:upper_bound_by_algebraic_relations}. On the other hand, $\mc{L}(\mathbb{F}O_N)$ is diffuse and so $\sigma(U,\tau)=1$.$\hfill\blacksquare$
\end{ex}

\begin{rem}
For a tracial von Neumann algebra $(M,\tau)$ and $X:=\{x_1,\ldots, x_n\}\subset M_{s.a.}$, one always has
    \[
        \sigma(\C\<X\>,\tau) \leq \delta^*(x_1,\ldots, x_n) \leq \delta^\star(x_1,\ldots, x_n) \leq \Delta(x_1,\ldots, x_n) 
    \]
(see \cite[Corollary 4.4]{CN21} and \cite[Lemma 4.1 and Theorem 4.4]{CS05}). If $x_1$ is diffuse and commutes with the other elements of the tuple, then $\sigma(\C\<X\>,\tau)=1$ by the previous corollary. Moreover, $\Delta(x_1,\ldots, x_n)=1$ by \cite[Lemma 3.4]{CS05}. Consequently, $\delta^*(x_1,\ldots, x_n) = \delta^\star(x_1,\ldots, x_n)=1$ in this case as well. If $\C\<X\>''$ embeds into the ultrapower of the hyperfinite $\II_1$ factor $\mathcal{R}^\omega$, then this can be deduced from \cite{BCG03} and \cite[Corollary 4.7]{Jun03}. However, our arguments also hold without this assumption.
\end{rem}

Recall that when the free Stein dimension is full (i.e. $\sigma(\C\<X\>,\tau)=|X|$), then $\partial_X$ and $\J_X$ are closable operators (see \cite[Corollary 4.7]{CN21}).

\begin{thm}\label{thm:maximal_fSd}
Let $(M,\tau)$ be a tracial von Neumann algebra with finite subset $X=X^*\subset M$. If $\sigma(\C\<X\>,\tau)=|X|$, then
    \[
        \sigma(\C\<X\cup Y\>,\tau) \leq |X| = \sigma(\C\<X\>,\tau).
    \]
for any finite subset $Y=Y^*\subset \dom(\overline{\partial_X})\cap \C\<X\>''$.
\end{thm}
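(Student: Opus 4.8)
The plan is to reinterpret the hypothesis $Y\subseteq\dom(\overline{\partial_X})$ as supplying a family of \emph{approximate algebraic relations} among the enlarged generating set $Z:=X\cup Y$ (with the elements of $Y$ given fresh formal variables), and then to feed these relations into Lemma~\ref{lem:weak_convergence_upper_bound}. Write $N:=\C\<X\>''$; since $Y\subset N$ we have $\C\<Z\>''=N$, so the coefficient algebra is unchanged, and by hypothesis together with \cite[Corollary~4.7]{CN21} the gradient $\partial_X$ is closable, so $\overline{\partial_X}$ and its domain are meaningful. For each $y\in Y$ I would choose $p_{y,m}\in\C\<T_X\>$ with $p_{y,m}(X)\to y$ and $(\partial_x p_{y,m})(X)\to\overline{\partial_x}(y)$ in $\|\cdot\|_2$ for every $x\in X$ as $m\to\infty$; this is possible \emph{precisely because} $y\in\dom(\overline{\partial_X})$.

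Setting $P^{(m)}:=(t_y-p_{y,m}(T_X))_{y\in Y}\in\C\<T_Z\>^{|Y|}$, one has $P^{(m)}(Z)=(y-p_{y,m}(X))_{y\in Y}\to0$ in $L^2(N)^{|Y|}$, while $(\J_Z P^{(m)})(Z)$ converges in $\|\cdot\|_2$ to the matrix $H\in M_{|Y|\times|Z|}(L^2(N\bar\otimes N^\circ))$ whose $Y$-block is the identity, $[H]_{y,y'}=\delta_{y=y'}\,1\otimes1$, and whose $X$-block is $[H]_{y,x}=-\overline{\partial_x}(y)$. Here I use $\partial_{y'}t_y=\delta_{y=y'}\,1\otimes1$ and $\partial_{y'}p_{y,m}=0=\partial_x t_y$ for $x\in X$. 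Thus the hypotheses of Lemma~\ref{lem:weak_convergence_upper_bound} hold (with that lemma's $X,B,M$ taken to be $Z,\C,N$), giving $\sigma(\C\<X\cup Y\>,\tau)=\sigma(\C\subset\C\<Z\>,\tau)\le\dim\H^\perp_{N\bar\otimes N^\circ}$, where $\H$ is the closure of $\set{\paren{\sum_{y\in Y}J_\ttop\paren{m_y[H]_{y,z}}}_{z\in Z}\colon m_y\in N\bar\otimes N^\circ}$. Crucially, since $H$ is only $L^2$-valued I must invoke Lemma~\ref{lem:weak_convergence_upper_bound} and not Theorem~\ref{thm:upper_bound_by_algebraic_relations}, which requires bounded entries.

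It then remains to show $\dim\H^\perp\le|X|$, and this is where the identity $Y$-block of $H$ pays off: the $z=y'$ component of the generating vector of $\H$ is exactly $J_\ttop m_{y'}$, while the $z=x$ components are determined by the $m_y$ together with the vectors $\overline{\partial_x}(y)$. Writing the orthogonality condition $(\zeta_z)_z\perp\H$ and isolating the (independent) coefficient of each $m_y$, one finds that membership in $\H^\perp$ forces each $\zeta_y$ to be a fixed expression in the $X$-coordinates $(\zeta_x)_{x\in X}$. In particular, if $(\zeta_z)_z\in\H^\perp$ has $\zeta_x=0$ for all $x\in X$, then $\ang{\zeta_{y'},J_\ttop m_{y'}}=0$ for every $m_{y'}\in N\bar\otimes N^\circ$, so $\zeta_{y'}=0$ by density of $J_\ttop(N\bar\otimes N^\circ)$. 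Hence the diagonal, $(N\bar\otimes N^\circ)$-linear, contractive projection $\pi_X\colon\H^\perp\to L^2(N\bar\otimes N^\circ)^X$ is injective, and the rank--nullity formula for Hilbert modules gives $\dim\H^\perp=\dim\overline{\pi_X(\H^\perp)}\le|X|$. Combining yields $\sigma(\C\<X\cup Y\>,\tau)\le|X|=\sigma(\C\<X\>,\tau)$.

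The main obstacle---and the reason I route through Lemma~\ref{lem:weak_convergence_upper_bound} rather than arguing directly---is that the naive approach of restricting a derivation $\delta\in\Der_{1\otimes1}(\C\subset\C\<Z\>,\tau)$ to $\C\<X\>$ and claiming $\delta$ is determined by $\delta|_X$ breaks down on the Hilbert-module \emph{closure}. One would like $\delta(y)=\sum_x\overline{\partial_x}(y)\#\delta(x)$, but this pairs two $L^2$-vectors under $\#$, which need not converge; consequently the restriction map is injective on the dense subspace of genuine $\Der_{1\otimes1}$ derivations (via closability of each individual $\delta$) yet is not evidently injective after passing to the closure, and a bounded-below estimate fails because $\|\overline{\partial_x}(y)\|$ carries no operator-norm control. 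Lemma~\ref{lem:weak_convergence_upper_bound} is engineered to consume exactly the weak-limit data $(0,H)$ with $H$ merely $L^2$-valued---which is precisely what $\dom(\overline{\partial_X})$ provides---thereby converting the heuristic ``$\delta(y)$ is controlled by $\delta|_X$'' into the clean and unconditional count $\dim\H^\perp\le|X|$.
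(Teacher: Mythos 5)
Your proposal is correct and follows essentially the same route as the paper: approximate each $y\in Y$ by polynomials in the $X$-variables, form the relations $(y - p_{y,m}(X))_{y\in Y}$, and feed the resulting weak limit $(0,H)$ with $H=(-\overline{\J_X}Y\mid\1)$ into Lemma~\ref{lem:weak_convergence_upper_bound}, exactly as the paper does. The only cosmetic difference is the final count: the paper shows $\dim\H_\mmop\geq|Y|$ via the dense range of the coordinate projection of $\H$ onto the $Y$-block, while you show $\dim\H^\perp_\mmop\leq|X|$ via injectivity of the projection of $\H^\perp$ onto the $X$-block---these are dual formulations of the same fact, both valid.
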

\begin{proof} 
We may assume that $M=\C\<X\>''$. Since $Y\subset \dom(\overline{\partial_X})$, for each $y\in Y$ we can find a sequence $(p^{(y)}_n)_{n\in \N}\subset \C\<X\>$ so that $p^{(y)}_n\to y$ in $L^2(M,\tau)$ while $\partial_X p^{(y)}_n\to \overline{\partial_X}(y)$ in $L^2(\mmop,\ttop)$. Define $Q_n:=( y - p^{(y)}_n)_{y\in Y}\in \C\<X\cup Y\>^Y$. It follows that $(Q_n, \J_{X\cup Y} Q_n)\to (0,H)$ weakly, where
    \[
        H = (-\overline{\J_X}Y \mid \1 ).
    \]
If $\H$ is as in Lemma~\ref{lem:weak_convergence_upper_bound}, then $\H$ is the right Hilbert $\mmop$-module generated by the rows of $H$. Consider the bounded right $\mmop$-linear map $\H\ni (\xi_z)_{z\in X\cup Y}\mapsto (\xi_y)_{y\in Y}\in L^2(\mmop,\ttop)^Y$. Clearly this has dense range and consequently $\dim \H_{\mmop} \geq |Y|$. The desired inequality then follows from Lemma~\ref{lem:weak_convergence_upper_bound}.
\end{proof}

\begin{thm}\label{thm:including_a_unitary}
Let $(M,\tau)$ be a tracial von Neumann algebra and $A\subset M$ a finitely generated unital $*$-subalgebra containing a diffuse element $x$. Suppose $u\in M$ is a unitary such $uxu^*\in A$. If $B$ is the $*$-algebra generated by $A\cup\{u\}$ then $\sigma(B,\tau)\leq \sigma(A,\tau)$.
\end{thm}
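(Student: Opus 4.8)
The plan is to exploit the two defining relations $uu^*=1$ and $uxu^*=y$, where $y:=uxu^*\in A$, by differentiating them with a derivation in $\Der_{1\otimes1}(B,\tau)$ and using the diffuseness of $x$ to show that such a derivation is \emph{completely determined by its restriction to $A$}. I will then recognise these restrictions as $A$-derivations valued in an enlarged bimodule, and argue that the free Stein dimension of that enlarged space is still $\sigma(A,\tau)$.

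Since $\sigma(B,\tau)$ depends only on $(B,\tau|_B)$, I may assume $M=B''$ and abbreviate $\mmop=M\bar\otimes M^\circ$. Fix a self-adjoint generating set $X=X^*$ of $A$, so that $X\cup\{u,u^*\}$ generates $B$. Given $\delta\in\Der_{1\otimes1}(B,\tau)$, differentiating $uu^*=1$ gives $\delta(u^*)=-u^*\delta(u)u^*$, and differentiating $uxu^*=y$, substituting this, and multiplying on the right by $u$ yields after a short computation the identity
\[
\delta(u)x-y\,\delta(u)=\delta(y)u-u\,\delta(x).
\]
The right-hand side involves only $\delta(x)$ and $\delta(y)$ with $x,y\in A$, hence only $\delta|_A$, while the left-hand side is $S(\delta(u))$ for the Sylvester operator $S\colon T\mapsto Tx-yT$ on $L^2(\mmop,\ttop)\cong\HS(L^2(M,\tau))$. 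Because $x$ is diffuse, the multiplication operator $x\otimes1-1\otimes x$ has trivial kernel, exactly as in the proof of Corollary~\ref{cor:diffuse_center}; since $\ker S=u\cdot(\{x\}'\cap\HS)$, this makes $S$ injective. Thus $\delta(u)=S^{-1}(\delta(y)u-u\,\delta(x))$, and together with $\delta(u^*)=-u^*\delta(u)u^*$ we conclude that $\delta$ is determined by $\delta|_A$, that is, by $(\delta(x'))_{x'\in X}$.

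Consequently the bounded right $\mmop$-linear map $\Psi\colon\overline{\Der_{1\otimes1}(B,\tau)}\to L^2(\mmop,\ttop)^X$ sending $\delta\mapsto(\delta(x'))_{x'\in X}$ (the map $\phi_{X\cup\{u,u^*\}}$ followed by a coordinate projection) is injective, so $\sigma(B,\tau)\le\dim\overline{\operatorname{ran}\Psi}_{\mmop}$. Each $\delta|_A$ is a derivation $A\to L^2(\mmop,\ttop)$ whose adjoint has $1\otimes1$ in its domain, with $(\delta|_A)^*(1\otimes1)=E_{A''}\delta^*(1\otimes1)\in L^2(A,\tau)$; hence $\overline{\operatorname{ran}\Psi}$ is contained in $\phi_X(\overline{\mathcal D})$, where $\mathcal D$ is the space of all derivations $A\to L^2(\mmop,\ttop)$ for which $1\otimes1$ lies in the domain of the adjoint. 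It therefore suffices to prove $\dim\overline{\mathcal D}_{\mmop}=\sigma(A,\tau)$.

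For this last step I would let $P_A\in M_{|X|}(A''\bar\otimes A''^\circ)$ be the projection onto $\phi_X(\overline{\Der_{1\otimes1}(A,\tau)})\subseteq L^2(\aaop,\ttop)^X$, whose trace is $\sigma(A,\tau)$, and show $\phi_X(\overline{\mathcal D})=P_A\,L^2(\mmop,\ttop)^X$. The inclusion $\supseteq$ holds because $\Der_{1\otimes1}(A,\tau)\subseteq\mathcal D$ and $\overline{\mathcal D}$ is right $\mmop$-invariant; for $\subseteq$, composing any $d\in\mathcal D$ with the orthogonal projection $E$ onto $L^2(\aaop,\ttop)$ produces an element of $\Der_{1\otimes1}(A,\tau)$, so $\langle\phi_X(d),w\rangle=\langle\phi_X(E d),w\rangle=0$ for every $w\in(1-P_A)L^2(\aaop,\ttop)^X$, and a standard module argument promotes this to orthogonality against all of $(1-P_A)L^2(\mmop,\ttop)^X$. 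Since $\tau\otimes\tau^\circ$ restricts compatibly from $\mmop$ to $A''\bar\otimes A''^\circ$, the dimension of $P_A\,L^2(\mmop,\ttop)^X$ over $\mmop$ equals the trace of $P_A$, namely $\sigma(A,\tau)$, completing the argument. The essential use of diffuseness is the injectivity of $S$ in the second paragraph, but I expect the main obstacle to be this final base-change step: verifying that enlarging the coefficient algebra from $A''\bar\otimes A''^\circ$ to $\mmop$ leaves the relevant von Neumann dimension unchanged.
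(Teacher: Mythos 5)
Your first two paragraphs reproduce the paper's own argument: differentiating the relation $ux=(uxu^*)u$ and using diffuseness of $x$ to see that $1\otimes x-uxu^*\otimes 1$ has trivial kernel, so a derivation on $B$ is determined by its restriction to $A$; your Sylvester operator $S$ is that same computation. The genuine gap is in your last paragraph, namely the claim $\phi_X(\overline{\mathcal D})=P_AL^2(\mmop,\ttop)^X$ (hence $\dim\overline{\mathcal D}_{\mmop}=\sigma(A,\tau)$). Write $\mathcal H_0:=L^2(\mmop,\ttop)\ominus L^2(\aaop,\ttop)$. Since $1\otimes1\in L^2(\aaop,\ttop)$, for \emph{any} derivation $d\colon A\to\mathcal H_0$ the functional $a\mapsto\ang{d(a),1\otimes1}$ vanishes identically, so the condition ``$1\otimes1\in\dom(d^*)$'' is vacuous on $\mathcal H_0$-valued derivations: $\mathcal D$ contains every derivation of $A$ into $\mathcal H_0$, constrained by nothing beyond the algebraic relations of $A$. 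Via $\phi_X$ these fill out $(1-s)\mathcal H_0^X$, where $1-s\in M_{|X|}((\aaop)'')$ is the projection with $(1-s)L^2(\aaop,\ttop)^X=\phi_X(\Der(A,\tau))$ (closed and right-invariant by Lemma~\ref{lem:derivations_as_Hilbert_spaces}). One has $P_A\le 1-s$, but equality would mean $\overline{\Der_{1\otimes1}(A,\tau)}=\phi_X(\Der(A,\tau))$, i.e.\ equality in $\sigma(A,\tau)\le\beta_1^{(2)}-\beta_0^{(2)}+1$, which is precisely what is \emph{not} known in general (cf.\ the remark closing Section~\ref{sec:prelims}). Whenever $(1-s)-P_A\neq0$ and this projection acts faithfully on $\mathcal H_0$ (e.g.\ when $A''$ and $M$ are $\II_1$ factors with $A''\neq M$), there is $d\in\mathcal D$ with $\phi_X(d)\notin P_AL^2(\mmop,\ttop)^X$, so your equality fails. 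The precise breaking point is your ``standard module argument'': to promote orthogonality from $(1-P_A)L^2(\aaop,\ttop)^X$ to $(1-P_A)L^2(\mmop,\ttop)^X$ you must pair $\phi_X(d)$ against $v\cdot m$ with $m\in\mmop$, i.e.\ rerun the first step on $d\cdot m^*$; but right multiplication by a general element of $\mmop$ destroys the adjoint-domain condition (Voiculescu's argument places $\aaop$, not $\mmop$, inside $\dom(d^*)$), so $\mathcal D$ and $\overline{\mathcal D}$ need not be right $\mmop$-invariant and the promotion has no starting point---indeed $\dim_{\mmop}\overline{\mathcal D}$ need not even be well defined.

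The repair is to never replace $\operatorname{ran}\Psi$ by the too-large space $\mathcal D$. The closure $\overline{\Der_{1\otimes1}(B,\tau)}$ \emph{is} a right $\mmop$-module; for $\delta$ in it and $m\in\mmop$, the derivation $\delta\cdot m$ stays in it, and $E\circ\left((\delta\cdot m)|_A\right)\in\overline{\Der_{1\otimes1}(A,\tau)}$, where $E$ is the projection of $L^2(\mmop,\ttop)$ onto $L^2(\aaop,\ttop)$: for $\delta\in\Der_{1\otimes1}(B,\tau)$ this is your own observation that $(E\circ\delta|_A)^*(1\otimes1)$ is the projection of $\delta^*(1\otimes1)$ onto $L^2(A'',\tau)$, and it survives pointwise limits. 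Feeding this into your orthogonality computation, now pairing with $v\cdot m$ for $v\in(1-P_A)L^2(\aaop,\ttop)^X$ and $m\in\mmop$, yields directly that $\operatorname{ran}\Psi\subseteq P_AL^2(\mmop,\ttop)^X$. Combined with injectivity of $\Psi$ on the closure (legitimate, since by Lemma~\ref{lem:derivations_as_Hilbert_spaces} elements of that closure are still derivations on $B$, to which your Sylvester argument applies) and the harmless trace compatibility---the trace of $P_A$ is the same computed in $M_{|X|}((\aaop)'')$ or in $M_{|X|}(\mmop)$---this gives $\sigma(B,\tau)\le\dim_{\mmop}P_AL^2(\mmop,\ttop)^X=\sigma(A,\tau)$. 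For what it is worth, the paper's own proof records only the injectivity step and leaves all of this bookkeeping implicit, so your instinct that the base-change step is the real content was correct; it is only your resolution of it that does not work.
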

\begin{proof}
We claim that
	\[
		\Der(B,\tau)\ni \delta\mapsto \delta|_A\in \Der(A,\tau)
	\]
is an injection. Suppose $\delta|_A\equiv 0$. Then we have
	\[
		0 = \delta( ux - (uxu^*)u) = \delta(u)\cdot x - (uxu^*)\cdot \delta(u) = (1\otimes x - uxu^*\otimes 1)\# \delta(u).
	\]
Since $x$ is diffuse, $1\otimes x - uxu^*\otimes 1$ has trivial kernel and so we must have $\delta(u)=0$. But then $\delta\equiv 0$.
\end{proof}

We cannot expect equality in the above theorem: let $x$ be a semicicular operator and let $u$ be a Haar unitary that is free from $x$. Set $y=uxu^*$ and $z=u^2 x(u^*)^2$, and let $A=\C\<x,y,z\>$. Since these are freely independent semicircular operators, we have $\sigma(A)=3$. On the other hand, the algebra generated $A\cup{u}$ is $\C\<x,u\>$ which has free Stein dimension two. Note, however that $uAu^*\neq A$ since $uzu^*\not\in A$. 

Even if one assumes $u$ normalizes all of $A$, one should still not expect equality: for $M=A\otimes (\C\oplus \C)$ and let $u=1\otimes (1\oplus -1)$ for $A$ with $\sigma(A) > 1$, Theorem~\ref{thm:direct_sum_formula} gives $\sigma(M) = \frac12+\frac12\sigma(A) < \sigma(A)$.

\begin{ex}\footnote{We are grateful to Vaughan Jones for suggesting this example.}
For $\theta\in \R\setminus \Q$, let $\mathcal{A}_\theta$ be the irrational rotation algebra with unique trace $\tau$ and unitary generators $vu=e^{2\pi i\theta}uv$. Then $u$ and $v$ are Haar unitaries, and so $A:=\C\<u,u^*\>$ contains the diffuse element $u+u^*$ and
    \[
        v(u+u^*)v^* = e^{2\pi i \theta} u + e^{-2\pi i \theta} u^* \in A.
    \]
So by Theorem~\ref{thm:including_a_unitary} and \cite[Theorem 3.8]{CN21}
    \[
        1 \leq \sigma(\C\<u,u^*,v,v^*\>, \tau) \leq \sigma(\C\<u,u\>,\tau) \leq 1.
    \]
Thus $\sigma(\C\<u,u^*,v,v^*\>,\tau)=1$. In particular, this quantity is indepenedent of $\theta$. This suggests the free Stein dimension is not a good invariant for $C^*$-algebras. $\hfill\blacksquare$
\end{ex}

\begin{rem}
Let $\mathcal{R}$ be the hyperfinite $\II_1$ factor with its unique trace $\tau$.
Heuristically, $\mathcal{R}$ is a limit of matrix algebras and
    \[
        \sigma(M_n(\C),\tr_n) = 1- \frac{1}{n^2}
    \]
for all $n\in \N$ by \cite[Corollary 5.2]{CN21}, so one should suspect that $\sigma(A,\tau)=1$ for any weakly dense finitely generated unital $*$-subalgebra $A\subset \mathcal{R}$. This is supported by the previous example since the von Neumann algebra generated by $\mathcal{A}_\theta$ under the GNS representation with respect to its unique trace is $\mathcal{R}$. Additionally, if $\Gamma$ is an i.c.c. amenable group so that $L(\Gamma)=\mathcal{R}$, then
    \[
        \sigma(\C[\Gamma],\tau)=\beta_1^{(2)}(\Gamma) - \beta_0^{(2)}(\Gamma)+1=1
    \]
by \cite[Proposition 5.1]{CN21}. In general, one always has $\sigma(A,\tau) \geq 1$ by \cite[Theorem 3.8]{CN21} since $\mathcal{R}$ is diffuse. So to prove this conjecture one need only establish the other inequality, but this seems to be beyond the reach of our current techniques.
\end{rem}

\section{Separable Abelian von Neumann Algebras}\label{sec:abelian_vNas}
In this section, we turn our attention to the abelian case.
We will show in Theorem~\ref{thm:fSd_for_abelian_vNas} that if $(M, \tau)$ is a separable abelian tracial von Neumann algebra, then the free Stein dimension of any weakly dense finitely generated unital $*$-subalgebra of $M$ depends only on the traces of the minimal projections in $M$. In particular, the free Stein dimension in this case is an invariant of the von Neumann algebra (and the choice of trace) rather than just an invariant of the $*$-subalgebra.
First, however, we will need two technical lemmas.

\begin{lem}
\label{lem:minimal_central_projections_are_orthogonal_to_derivations}
Let $(M,\tau)$ be a tracial von Neumann algebra with finitely generated unital $*$-subalgebra $A\subset M$.
Suppose $p\in A''$ is a central projection that is minimal in $A''$.
Then for any closable derivation $\delta\colon A\to L^2(\aaop,\ttop)$ one has $\delta(A)\perp p\otimes p$.
\end{lem}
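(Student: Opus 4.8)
The plan is to fix a self-adjoint $a=a^*\in A$ and show that the linear functional $f(a):=\<\delta(a),p\otimes p\>_{\ttop}$ vanishes; since $A=A_{\mathrm{sa}}+iA_{\mathrm{sa}}$ and $f$ is $\C$-linear, this gives $\delta(A)\perp p\otimes p$. The starting point is that minimality and centrality of $p$ force $ap=pa=\lambda p$ with $\lambda:=\tau(ap)/\tau(p)\in\R$ (the character of $A$ cut out by the summand $pA''=\C p$). In particular $(a-\lambda)p=0$, so $p\le\1_{\{\lambda\}}(a)$ and $\lambda$ is an \emph{atom} of the distribution $\mu$ of $a$, of mass $\mu(\{\lambda\})\ge\tau(p)>0$. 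The crucial role of $p\otimes p$ is that it is a joint eigenvector, with eigenvalue $(\lambda,\lambda)$, of the commuting self-adjoint operators $L_a\colon\xi\mapsto a\cdot\xi$ and $R_a\colon\xi\mapsto\xi\cdot a$ on $L^2(\aaop,\ttop)$. One might hope to show directly that $p\in\dom(\overline\delta)$, since then Leibniz applied to $p=p^2$ gives $p\,\overline\delta(p)\,p=0$ and hence $p\,\delta(a)\,p=0$ for all $a$; but membership of a minimal central projection in the $L^2$-domain is not clear a priori, so I would instead argue by contradiction through the atom.

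First I would record the functional-calculus identity for derivations: writing $\tilde\partial q(s,t)=\frac{q(s)-q(t)}{s-t}$ for the divided difference of a polynomial $q$, one has $\delta(q(a))=(\tilde\partial q)(L_a,R_a)\,\delta(a)$, because $\delta(a^k)=\sum_{j=0}^{k-1}L_a^{j}R_a^{k-1-j}\delta(a)$ and $\sum_{j=0}^{k-1}s^jt^{k-1-j}=\tilde\partial(s^k)(s,t)$. Since $L_a,R_a$ are commuting self-adjoints with joint spectrum in $[-\|a\|,\|a\|]^2$, the joint spectral theorem and the mean value theorem give $\|(\tilde\partial q)(L_a,R_a)\|\le\sup|\tilde\partial q|=\|q'\|_\infty$. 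Pairing against the eigenvector $p\otimes p$ then yields the key formula $\<\delta(q(a)),p\otimes p\>_{\ttop}=q'(\lambda)\,f(a)$ for every polynomial $q$.

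The heart of the argument exploits closability at the atom. Assuming for contradiction that $f(a)\ne0$, I would construct polynomials $q_n$ with: (i) $\|q_n\|_{L^2(\mu)}\to0$, so that $q_n(a)\to0$ in $L^2(A,\tau)$; (ii) $\|q_n'\|_\infty\le C$ uniformly, so that $\|\delta(q_n(a))\|_{\ttop}\le C\|\delta(a)\|_{\ttop}$ is bounded; and (iii) $q_n'(\lambda)\to1$. These coexist precisely because $\lambda$ is an atom: one approximates in $C^1[-\|a\|,\|a\|]$ the uniformly Lipschitz "tent" functions that agree with $(s-\lambda)$ near $\lambda$ but are truncated to have modulus $\le1/n$ away from it, so they are small in $L^2(\mu)$ yet retain unit slope at the atom. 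As $\{\delta(q_n(a))\}$ is bounded, the Banach--Saks property of the Hilbert space $L^2(\aaop,\ttop)$ produces a subsequence whose Cesàro averages converge in norm to some $\eta$, while the corresponding averages of $q_n(a)$ still converge to $0$. Closability of $\delta$ forces $\eta=0$; but the key formula gives $\<\eta,p\otimes p\>_{\ttop}=\lim_n q_n'(\lambda)\,f(a)=f(a)\ne0$, a contradiction. Hence $f(a)=0$ for all self-adjoint $a$, and the lemma follows.

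The main obstacle is item (iii) together with the uniform bound in (ii): I must produce honest polynomials that concentrate at the atom (vanishing $L^2(\mu)$-mass) while keeping a fixed nonzero slope there and a uniform global Lipschitz constant. This is exactly the classical mechanism by which an atom obstructs closability, and the uniform derivative bound is what keeps the operators $(\tilde\partial q_n)(L_a,R_a)$ uniformly bounded so that Banach--Saks can be invoked. Verifying the $C^1$ polynomial approximation with simultaneous control of the value, the derivative at $\lambda$, and the global Lipschitz constant (e.g.\ via Jackson-type estimates) is the one genuinely analytic point; the remaining steps are formal.
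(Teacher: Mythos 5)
Your proof is correct, and it rests on the same skeleton as the paper's: reduce to $a=a^*$, use centrality and minimality to get $ap=pa=\lambda p$ (the paper translates so that $\lambda=0$), push polynomials that are uniformly small but keep derivative close to $1$ at $\lambda$ through the divided-difference identity $\delta(q(a))=(\tilde\partial q)(L_a,R_a)\,\delta(a)$, and let closability kill the limit. Where you genuinely diverge is the convergence step. The paper additionally requires $f_n'\to 0$ locally uniformly off the atom, and splits $\delta(f_n(a))$ against the spectral projections of $a$ (the atom $p_0$, a small punctured neighbourhood $p_\epsilon$, and the remainder $q_\epsilon$) to prove \emph{norm} convergence $\delta(f_n(a))\to(p_0\otimes p_0)\delta(a)$; closability then gives the stronger conclusion $(p_0\otimes p_0)\delta(a)=0$. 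You instead observe that $p\otimes p$ is a joint eigenvector of the commuting pair $(L_a,R_a)$ at $(\lambda,\lambda)$, which collapses everything onto the scalar identity $\ang{\delta(q(a)),p\otimes p}=q'(\lambda)\ang{\delta(a),p\otimes p}$: this lets you drop any decay hypothesis on $q_n'$ away from $\lambda$, but since $\delta(q_n(a))$ is then only bounded rather than convergent, you must pay with Banach--Saks (equivalently, a weakly convergent subsequence together with the fact that the graph closure, being a closed subspace, is weakly closed). So your route asks less of the polynomials and avoids the $\epsilon$-spectral splitting, at the price of a compactness argument; the paper's route is compactness-free and yields orthogonality of $\delta(a)$ to the whole corner $(p_0\otimes p_0)L^2(\aaop,\ttop)$ rather than to the single vector $p\otimes p$. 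Two minor points: ``approximate the tent functions in $C^1$'' cannot be taken literally, since tents are not $C^1$ --- mollify first (e.g.\ start from $\frac{1}{n}\sin(n(s-\lambda))$, approximate its derivative uniformly by a polynomial and integrate from $\lambda$), which is exactly parallel to the paper's ``antiderivatives of approximate spikes''; and the atom of the spectral measure is not what makes your conditions (i)--(iii) coexist (they can be arranged with $\norm{q_n}_\infty\to 0$ regardless of atoms) --- minimality of $p$ enters your argument only through the existence of the eigenvector $p\otimes p$.
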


\begin{proof}
	Suppose $\delta \colon A \to L^2(\aaop, \ttop)$ is closable, $p \in A''$ is central and minimal in $A''$, and $x \in A$.
	Our goal is to show $\delta(x) \perp p\otimes p$; note that it suffices to show this for $x = x^*$.
	By the minimality of $p$ we have that $px$ is a scalar multiple of $p$; translating $x$ by a scalar, we may assume $px = 0$.

	Let $f_n$ be a sequence of polynomials converging uniformly to zero on $[-\norm{x}, \norm{x}]$ so that $f_n'(0) = 1$, $f_n'$ converges uniformly to zero on compact subsets of $[-\norm{x}, \norm{x}] \setminus \set0$, and $f_n'$ are uniformly bounded.
	(For example, start with a sequence of polynomials uniformly approximating increasingly narrow spikes at $0$, and take a sequence of antiderivatives all of which vanish at $0$.)
	We have by choice of $f_n$ that $f_n(x) \to 0$ in norm and therefore in $L^2(A, \tau)$.

	Now, since $f_n$ is a polynomial, we have the algebraic identity
	\[\delta(f_n(x)) = \paren{\frac{f_n(x)\otimes 1 - 1\otimes f_n(x)}{x\otimes1 - 1\otimes x}}\cdot\delta(x).\]
	We claim that this sequence converges in $L^2(\aaop, \ttop)$.

	Notice that the difference quotients
	\[d_n : (s, t) \mapsto \frac{f_n(s)  - f_n(t)}{s - t}\]
	extend continuously to the diagonal, and are uniformly bounded, say by a constant $C$.
	(Uniform boundedness follows from the fact that $f_n'$ are uniformly bounded and the Intermediate Value Theorem.)
	Moreover, on any compact subset of $[-\norm x, \norm x]^2 \setminus\set{(0,0)}$, they tend uniformly to $0$.

	Let $\epsilon > 0$, and write $1 = p_0 + p_\epsilon + q_\epsilon$ where $p_0, p_\epsilon, q_\epsilon$ are the spectral projections of $x$ onto $\set{0}$, $(-\epsilon, \epsilon)\setminus\set0$, and $\R\setminus(-\epsilon, \epsilon)$ respectively.
	The above convergence of the difference quotients implies that for each $\epsilon > 0$,
	\[(q_\epsilon \otimes 1) \delta(f_n(x)), (1\otimes q_\epsilon)\delta(f_n(x)) \to 0\]
	as $n\to\infty$. Since $p_\epsilon \to 0$ in the $\sigma$-strong operator topology,
	    \[
	        \|(p_\epsilon\otimes 1)\delta(f_n(x))\|_{\ttop} = \|d_n(x) (p_\epsilon \otimes 1)\delta(x)\|_{\ttop} \leq C \| (p_\epsilon\otimes 1)\delta(x)\|_{\ttop} \to 0
	    \]
	as $\epsilon \to 0$. Similarly for $(1\otimes p_\epsilon)\delta(f_n(x))$.
	We conclude that $\delta(f_n(x)) \to (p_0\otimes p_0)\delta(x)$ in $L^2(\aaop, \ttop)$: indeed, given $\rho > 0$, choose $\epsilon > 0$ so that $\norm{(p_\epsilon\otimes 1)\delta(f_n(x))}_{\ttop}$ and $\norm{(1\otimes p_\epsilon)\delta(f_n(x))}_\ttop$ are less than $\frac\rho2$ for all $n$, and then for $n$ sufficiently large $\norm{(q_\epsilon\otimes1)\delta(f_n(x))}_\ttop$ and $\norm{(1\otimes q_\epsilon)\delta(f_n(x))}_\ttop$ are less than $\frac\rho2$, too.

	Now, since $\delta$ is closable, we must have $\delta(f_n(x)) \to \delta(0) = 0$.
	Therefore $(p_0\otimes p_0) \delta(x) = (p_0\otimes p_0)\delta(f_n(x)) \to 0$, so $(p_0\otimes p_0) \delta(x) = 0$.
	Finally, since $pp_0 = p$, $(p\otimes p)\delta(x) = (p\otimes p)(p_0 \otimes p_0)\delta(x) = 0,$ as desired.
\end{proof}


%

\begin{lem}\label{lem:existence_of_diffuse_element}
	Suppose that $(M, \tau)$ is an abelian von Neumann algebra and $A \subseteq M$ is a weakly dense finitely generated unital $*$-subalgebra.
	If $M$ contains a diffuse element, then so does $A$.
\end{lem}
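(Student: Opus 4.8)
The plan is to produce a diffuse element of $A$ explicitly, as a real linear combination of a self-adjoint commuting generating set. First I would reduce to the case $A=\C\<x_1,\dots,x_n\>$ with $x_1,\dots,x_n$ self-adjoint and pairwise commuting: replacing a finite generating set by the real and imaginary parts of its elements changes neither $A$ nor the hypotheses, and these parts commute since $M$ is abelian. As $A$ is weakly dense we have $A''=M$, so the joint spectral distribution $\nu$ of $(x_1,\dots,x_n)$ is a compactly supported Radon probability measure on $\R^n$, and the spectral theorem for the commuting tuple gives a trace-preserving identification $M\cong L^\infty(\R^n,\nu)$ under which $x_i$ is the $i$-th coordinate function. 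Under this identification an atom of $\nu$ corresponds to a minimal projection of $M$. For abelian $M$ the existence of a diffuse element is in fact equivalent to $M$ being diffuse: a minimal projection $p$ satisfies $pm\in\C p$ for every $m\in M$ and hence would give $m$ an eigenvalue, so no element could be diffuse. In particular the hypothesis forces $\nu$ to be atomless.

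Next I would search for the diffuse element among the self-adjoint elements $\sum_i a_ix_i\in A$ indexed by $a=(a_1,\dots,a_n)\in\R^n$. Writing $\ell_a(z)=\ang{a,z}$, the spectral distribution of $\sum_i a_ix_i$ is the pushforward $(\ell_a)_*\nu$, and this element is diffuse exactly when $(\ell_a)_*\nu$ is atomless. Using that a measure on $\R$ is atomless iff its square gives the diagonal zero mass, diffuseness is equivalent to $(\nu\times\nu)\set{(x,y):\ang{a,x-y}=0}=0$. Pushing $\nu\times\nu$ forward under the difference map $(x,y)\mapsto x-y$ to a probability measure $\rho$ on $\R^n$, this reads simply $\rho(a^\perp)=0$, where $a^\perp$ is the hyperplane through the origin orthogonal to $a$. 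Since $\nu$ is atomless its diagonal is $(\nu\times\nu)$-null, so $\rho(\set0)=0$.

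The heart of the argument is to show $\rho(a^\perp)=0$ for almost every direction $a$. I would integrate against the rotation-invariant probability measure $\gamma$ on the unit sphere $S^{n-1}$ and apply Fubini to the indicator of $\set{(a,z):\ang{a,z}=0}$:
\[
\int_{S^{n-1}}\rho(a^\perp)\,d\gamma(a)=\int_{\R^n}\gamma\paren{\set{a\in S^{n-1}:\ang{a,z}=0}}\,d\rho(z).
\]
For each $z\neq0$ the set $\set{a\in S^{n-1}:\ang{a,z}=0}$ is a great subsphere and hence $\gamma$-null, while the complementary set $\set{z=0}$ is $\rho$-null; thus the right-hand side vanishes. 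Therefore $\rho(a^\perp)=0$ for $\gamma$-almost every $a$, so some nonzero $a\in\R^n$ works, and $\sum_i a_ix_i$ is the desired diffuse element of $A$.

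The only genuinely delicate point is the existence claim secured by this averaging argument; the rest is bookkeeping through the spectral theorem. I note that higher-degree polynomials are never needed—a real linear combination of the generators already suffices—which is consistent with the observation that as soon as $\nu$ has an atom no element of $A$ is diffuse, matching the equivalence recorded in the first paragraph.
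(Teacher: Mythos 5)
Your proof is correct, but it takes a genuinely different route from the paper's. The paper argues by contraposition: assuming $A$ has no diffuse element, every real linear combination of the self-adjoint generators has an atom; it then chooses finitely many coefficient vectors in general position (any $n$ of them linearly independent) whose atom masses total strictly more than $n-1$, so that a pigeonhole argument yields $n$ spectral projections with nonzero product, and on that product projection the $n$ linearly independent constraints force every generator---hence every element of $M = A''$---to be constant, so $M$ has no diffuse element either. You instead argue directly: after modelling $M \cong L^\infty(\R^n,\nu)$ via the joint distribution of a commuting self-adjoint generating tuple, you observe that the hypothesis forces $\nu$ to be atomless, and a Tonelli argument over the sphere then shows $\rho(a^\perp) = 0$ for $\gamma$-almost every direction $a$, i.e.\ almost every real linear combination of the generators is diffuse. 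Both proofs search for the diffuse element among linear combinations of the generators, but yours gives a quantitatively stronger conclusion (almost every direction works, not merely some direction) and sidesteps the one step the paper glosses over, namely producing finitely many vectors whose atom masses sum to more than $n-1$, which implicitly needs a pigeonhole on the uncountable general-position family to bound the atom masses from below. The paper's argument, in exchange, is purely algebraic--combinatorial (no spectral-theorem identification or spherical measure needed) and yields extra information in the contrapositive: if $A$ has no diffuse element, then $M$ in fact contains a minimal projection. One small overstatement in your write-up: you assert that for abelian $M$ the existence of a diffuse element is \emph{equivalent} to $M$ being diffuse, but you prove (and need) only the implication that a minimal projection obstructs diffuse elements; the converse holds here but plays no role in the argument.
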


\begin{proof}
	Suppose that $A$ is generated by $x_1, \ldots, x_n$ and contains no diffuse element.
	Then for every tuple $(\lambda_1, \ldots, \lambda_n) \in \R^n$, $\lambda_1 x_1 + \cdots + \lambda_n x_n$ has an atom of non-zero mass in its spectral measure.
	Now, take a finite collection of vectors $(\lambda_{1,i}, \ldots, \lambda_{n, i}) \in \R^n$ and constants $c_i \in \C$ so that any $n$ of the vectors are linearly independent, and the total mass of the atoms of $\lambda_{1,i}x_1 + \cdots + \lambda_{n, i}x_n$ at $c_i$ is strictly more than $n-1$.
	(To find such a collection, we just need to note the fact that there is an uncountable set of vectors in $\R^n$ with the property that all subsets of size $n$ are linearly independent, such as $\set{(1, 2^t, 3^t, \ldots, n^t) \mid t \in \R}$.)

	Since the total mass of the corresponding spectral projections is more than $n-1$, there must be a collection of $n$ of them with non-zero product; since we are in a commutative von Neumann algebra this product is in fact another projection, on which the relations
	\begin{align*}
		\lambda_{1,1}x_1 + \ldots + \lambda_{n,1}x_n &= c_1 \\
		\lambda_{1,2}x_1 + \ldots + \lambda_{n,2}x_n &= c_2 \\
		&\vdots \\
		\lambda_{1,n}x_1 + \ldots + \lambda_{n,n}x_n &= c_n \\
	\end{align*}
	all hold.
	Since the coefficients were chosen to be linearly independent, on this projection each $x_i$ must be constant, and hence this atom is present in every element of the von Neumann algebra they generate.
\end{proof}

\begin{thm}\label{thm:fSd_for_abelian_vNas}
Let $(M,\tau)$ be a separable abelian tracial von Neumann algebra with set of minimal projections $P$. Then for any weakly dense finitely generated unital $*$-subalgebra $A\subset M$ one has
    \[
        \sigma(A,\tau) = 1 - \sum_{p\in P} \tau(p)^2.
    \]
\end{thm}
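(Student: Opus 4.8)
The plan is to prove that, for abelian $M$, the closures of the inner derivations and of $\Der_{1\otimes1}(A,\tau)$ coincide, and then to evaluate the dimension of the former via Lemma~\ref{lem:derivations_from_non-A-central_vectors}. Throughout I identify $(M,\tau)=(L^\infty(\Omega,\mu),\int\cdot\,d\mu)$ for a standard probability space $(\Omega,\mu)$ whose atoms $\{\omega_p\}_{p\in P}$ carry mass $\mu(\{\omega_p\})=\tau(p)$, together with $M\bar\otimes M^\circ\cong L^\infty(\Omega\times\Omega)$ and $L^2(\aaop,\ttop)\cong L^2(\Omega\times\Omega,\mu\times\mu)$. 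Under this identification the left and right actions of $x\in M$ are multiplication by $x\otimes1$ and $1\otimes x$, so the inner derivation $[\cdot,\xi]$ sends $x\mapsto(x\otimes1-1\otimes x)\xi$, a function vanishing on the diagonal $\Delta=\{\omega=\omega'\}$.

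First I would fix a self-adjoint generating set $X=\{x_1,\dots,x_m\}$ with $\C\<X\>=A$ and compute the inner-derivation dimension. The $A$-central vectors are exactly those supported on $\Delta$: if $(a\otimes1)\xi=(1\otimes a)\xi$ for all $a\in A$, then since $A$ is weakly dense and $M$ separates points, $\xi$ vanishes off $\Delta$. Hence $L^2_A(\aaop,\ttop)=Q\,L^2(\Omega\times\Omega)$, where $Q=1_\Delta$, and $\ttop(Q)=(\mu\times\mu)(\Delta)=\sum_{p\in P}\tau(p)^2$ because the diffuse part of the diagonal is null. Since $[\cdot,\xi]=[\cdot,(1-Q)\xi]$, applying Lemma~\ref{lem:derivations_from_non-A-central_vectors} with $\H=(1-Q)L^2(\Omega\times\Omega)$ gives
\[
    \dim\overline{\InnDer(A,\tau)}_{(\aaop)''}=\ttop(1-Q)=1-\sum_{p\in P}\tau(p)^2 .
\]
As $\InnDer(A,\tau)\subseteq\Der_{1\otimes1}(A,\tau)$, this already yields $\sigma(A,\tau)\ge 1-\sum_{p\in P}\tau(p)^2$.

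The heart of the argument is the reverse inclusion $\Der_{1\otimes1}(A,\tau)\subseteq\overline{\InnDer(A,\tau)}$. Let $\delta\in\Der_{1\otimes1}(A,\tau)$, which is closable, and write $g_j:=x_j\otimes1-1\otimes x_j$. Expanding $\delta(x_ix_j)=\delta(x_jx_i)$ via Leibniz and commutativity gives the relations $g_i\,\delta(x_j)=g_j\,\delta(x_i)$ for all $i,j$. Since the generators separate points of $\Omega$, for a.e.\ off-diagonal $(\omega,\omega')$ some $g_j$ is nonzero, so these relations let me build a single measurable $\eta$ on $\Omega\times\Omega$ (set $\eta:=\delta(x_j)/g_j$ on $\{g_j\ne0\}$, which is consistent, and $\eta:=0$ elsewhere, a set equal to $\Delta$ up to null) satisfying $\delta(x_j)=g_j\eta$ off $\Delta$ for every $j$. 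On $\Delta$ both sides vanish: $g_j$ vanishes there by definition, while Lemma~\ref{lem:minimal_central_projections_are_orthogonal_to_derivations} forces $\delta(x_j)\perp p\otimes p$, i.e.\ $\delta(x_j)$ vanishes at each atomic diagonal point, the remaining diffuse part of $\Delta$ being null. Thus $\delta(x_j)=g_j\eta$ holds $\mu\times\mu$-a.e.; that is, $\delta$ is \emph{inner with potential $\eta$}, except that $\eta$ need not lie in $L^2$.

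To conclude I would approximate by the truncations $\eta_n:=\eta\,1_{\{|\eta|\le n\}}\in L^\infty(\Omega\times\Omega)\subset L^2(\Omega\times\Omega)$, which are genuine potentials. Then $[\cdot,\eta_n](x_j)=g_j\eta_n=\delta(x_j)1_{\{|\eta|\le n\}}\to\delta(x_j)$ in $L^2$ by dominated convergence (dominated by $|\delta(x_j)|$, with $\eta$ finite a.e.), so $[\cdot,\eta_n]\to\delta$ in $\|\cdot\|_X$ and $\delta\in\overline{\InnDer(A,\tau)}$. Combining the two inclusions gives $\overline{\Der_{1\otimes1}(A,\tau)}=\overline{\InnDer(A,\tau)}$, whence
\[
    \sigma(A,\tau)=\dim\overline{\Der_{1\otimes1}(A,\tau)}_{(\aaop)''}=1-\sum_{p\in P}\tau(p)^2 .
\]
The main obstacle is precisely this upper-bound inclusion: extracting the common potential $\eta$ from the commutation relations and controlling its behaviour on $\Delta$, where Lemma~\ref{lem:minimal_central_projections_are_orthogonal_to_derivations} is essential to suppress the diagonal contribution (which accounts for the atomic correction $\sum_p\tau(p)^2$) and the truncation handles the possible non-integrability of $\eta$. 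In the purely diffuse case $P=\varnothing$ one may alternatively invoke Lemma~\ref{lem:existence_of_diffuse_element} and Corollary~\ref{cor:diffuse_center} to obtain $\sigma(A,\tau)=1$ directly.
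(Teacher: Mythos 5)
Your proposal is correct and follows the paper's overall strategy---show that every derivation in $\Der_{1\otimes 1}(A,\tau)$ is approximately inner, then compute $\dim \overline{\InnDer(A,\tau)}$ via Lemma~\ref{lem:derivations_from_non-A-central_vectors}---but the key step is carried out by a genuinely different mechanism. The paper never passes to the function model: it decomposes $\delta$ against the projections $p_0\otimes p_0$, $p_0\otimes p$, $p\otimes p_0$, and $p\otimes q$ (where $p_0$ is the diffuse part and $p,q$ range over the atoms), uses Lemma~\ref{lem:existence_of_diffuse_element} to produce a single self-adjoint diffuse element $x_0p_0\in Ap_0$, and approximates each piece separately by inner derivations built from truncated inverses of $[x_0,p_0\otimes p_0]$, of $x_0p_0-(x_0)_pp_0$, and of the scalars $a_p-a_q$; in each case the commutativity identity $[x_0,\delta(x)]=[x,\delta(x_0)]$ plays exactly the role your relations $g_i\,\delta(x_j)=g_j\,\delta(x_i)$ play. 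Your version trades this case analysis for a single global potential $\eta$, which is cleaner and makes the origin of the atomic correction $\sum_p\tau(p)^2$ transparent, but it leans on a measure-theoretic fact the paper never needs in this form: that a generating tuple of a weakly dense finitely generated subalgebra of $L^\infty(\Omega,\mu)$ is \emph{essentially} injective, i.e.\ that $\bigcap_j\{g_j=0\}$ coincides with the diagonal up to a $\mu\times\mu$-null set. Your stated justification (``the generators separate points of $\Omega$'') is literally false---they separate points only modulo null sets---and the correct statement is precisely where separability/standardness of $(\Omega,\mu)$ enters; it requires a disintegration (or Rokhlin-type) argument of roughly the same weight as the paper's Lemma~\ref{lem:existence_of_diffuse_element}, which your route replaces. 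The same fact also underlies your identification of the $A$-central vectors with $1_\Delta L^2(\Omega\times\Omega)$ in the lower bound. With that fact properly proved or cited, the rest of your argument---the consistency of $\eta$ across the sets $\{g_j\neq 0\}$, the use of Lemma~\ref{lem:minimal_central_projections_are_orthogonal_to_derivations} to kill $\delta(x_j)$ at the atomic diagonal points (the diffuse part of the diagonal being null), and the truncations $\eta_n=\eta\,1_{\{|\eta|\leq n\}}$ with dominated convergence---is sound, and in fact it establishes the same statement the paper proves, namely that every closable derivation on $A$ is approximately inner.
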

\begin{proof}
For $x\in A$ and each $p\in P$, let $x_p\in \C$ denote the scalar such that $xp=x_pp$. Note that $P$ is countable since $\tau$ is faithful. We will show every closable derivation $\delta\colon A\to L^2(\aaop, \ttop)$ is approximately inner, so that in particular one has
     \[
        \sigma(A,\tau) = \dim \overline{\InnDer(A,\tau)}_{\mmop} = \dim L^2_A(\aaop,\ttop)^\perp_{\mmop} = 1 - \sum_{p\in P} \tau(p)^2.
    \]
by Lemma~\ref{lem:derivations_from_non-A-central_vectors}.
Denote $p_0:=1- \sum_{p\in P}p$ and observe that by Lemma~\ref{lem:minimal_central_projections_are_orthogonal_to_derivations}, we have
    \[
        \delta = \delta\cdot\left(1\otimes 1 - \sum_{p\in P} p\otimes p\right) = \delta\cdot(p_0\otimes p_0) + \sum_{p\in P} \delta\cdot(p_0\otimes p + p\otimes p_0) + \sum_{\substack{p,q\in P\\ p\neq q }} \delta\cdot(p\otimes q).
    \]
Since $P$ is countable, it suffices to show each of the derivations in the above sum is approximately inner.

As $Mp_0$ is diffuse, Lemma~\ref{lem:existence_of_diffuse_element} implies that $Ap_0$ contains a self-adjoint diffuse element, say $x_0p_0$ for $x_0\in A$. For $\epsilon>0$, define
    \[
        \xi_\epsilon:= 1_{|[x_0,p_0\otimes p_0]|>\epsilon}[x_0,p_0\otimes p_0]^{-1} \delta(x_0)(p_0\otimes p_0),
    \]
where the inverse is as an operator on $L^2(\aaop,\ttop)(p_0\otimes p_0)$. Now, for any $x\in A$, $[x_0,x]=0$ implies $[x_0,\delta(x)]=[x,\delta(x_0)]$. Multiplying by $p_0\otimes p_0$ yields
    \begin{align*}
        [x,\xi_\epsilon] &= 1_{|[x_0,p_0\otimes p_0]|>\epsilon}[x_0,p_0\otimes p_0]^{-1}[x,\delta(x_0)](p_0\otimes p_0)\\
            &= 1_{|[x_0,p_0\otimes p_0]|>\epsilon}[x_0,p_0\otimes p_0]^{-1}[x_0,\delta(x)](p_0\otimes p_0)\\
            &= 1_{|[x_0,p_0\otimes p_0]|>\epsilon}\delta(x)(p_0\otimes p_0).
    \end{align*}
Since $x_0p_0$ is diffuse, $1_{|[x_0,p_0\otimes p_0]|>\epsilon}$ converges strongly to $p_0\otimes p_0$, and so $[\cdot,\xi_\epsilon]\to \delta\cdot (p_0\otimes p_0)$ pointwise on $A$.

Next, for $p\in P$ and $\epsilon>0$ define
    \[
        \eta_\epsilon:= \left(1_{|x_0p_0 - (x_0)_pp_0|>\epsilon} (x_0p_0 - (x_0)_pp_0)^{-1}\right)\cdot \delta(x_0)(p_0\otimes p).
    \]
Using $[x_0,\delta(x)]=[x,\delta(x_0)]$ and $(p_0\otimes p)\cdot x_0 = (x_0)_p p_0\otimes p$ gives
    \begin{align*}
        [x,\eta_\epsilon] &=  \left(1_{|x_0p_0 - (x_0)_pp_0|>\epsilon} (x_0p_0 - (x_0)_pp_0)^{-1}\right)\cdot [x,\delta(x_0)](p_0\otimes p)\\
            &= \left(1_{|x_0p_0 - (x_0)_pp_0|>\epsilon} (x_0p_0 - (x_0)_pp_0)^{-1}\right)\cdot [x_0,\delta(x)](p_0\otimes p)\\
            &= \left(1_{|x_0p_0 - (x_0)_pp_0|>\epsilon} (x_0p_0 - (x_0)_pp_0)^{-1}\right)(x_0p_0 - (x_0)_pp_0)\cdot\delta(x)(p_0\otimes p)\\
            &=1_{|x_0p_0 - (x_0)_pp_0|>\epsilon}\cdot \delta(x)(p_0\otimes p).
    \end{align*}
Since $x_0p_0$ is diffuse, $1_{|x_0p_0 - (x_0)_p p_0|>\epsilon}$ converges $\sigma$-strongly to $p_0$, and thus $[\cdot, \eta_\epsilon]\to \delta\cdot (p_0\otimes p)$ pointwise on $A$. We omit the proof of $\delta\cdot (p\otimes p_0)$, which is similar.

Finally, fix a pair of distinct projections $p,q\in P$, and let $a\in A$ be such that $a_p\neq a_q$. Such an element necessarily exists, since otherwise $p-q\not\in A''=M$. Define
    \[
        \zeta_{p,q}:= \frac{1}{a_p - a_q} \delta(a)(p\otimes q).
    \]
Using $[a,\delta(x)]= [x,\delta(a)]$ for $x\in A$ gives
    \[
        (a_p - a_q) \delta(x)(p\otimes q) = (x_p - x_q)\delta(a)(p\otimes q)=(x_p - x_q) (a_p - a_q) \zeta_{p,q} = (a_p -a_q) [x,\zeta_{p,q}].
    \]
Hence $\delta(x)(p\otimes q) = [x,\zeta_{p,q}]$ is inner.
\end{proof}



\begin{cor}
Let $\tau\colon \ell^\infty\to\C$ be a faithful normal tracial state, and let $A\subset \ell^\infty$ be a weakly dense finitely generated unital $*$-subalgebra. 
Then
    \[
        \sigma(A,\tau) = 1 - \sum_{n=1}^\infty \tau(e_n)^2.
    \] 
\end{cor}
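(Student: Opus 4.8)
The plan is to recognize this as a direct application of Theorem~\ref{thm:fSd_for_abelian_vNas}, so that the only genuine work is verifying that $(\ell^\infty,\tau)$ satisfies the hypotheses of that theorem and then computing its set of minimal projections.

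First I would pin down the form of $\tau$ and check separability. A faithful normal tracial state on $\ell^\infty$ is automatically tracial (the algebra is abelian) and, being normal, corresponds to an element of the predual $\ell^1$; thus it has the form $\tau(x)=\sum_{n=1}^\infty w_n x_n$ for weights $w_n\geq 0$ with $\sum_n w_n=1$, and faithfulness forces $w_n>0$ for every $n$. The associated GNS space $L^2(\ell^\infty,\tau)$ is then the weighted sequence space $\ell^2(\N,w)$, which is separable, and the GNS representation identifies $\ell^\infty$ with the diagonal operators thereon. Equivalently, one may simply note that the predual of $\ell^\infty$ is the norm-separable space $\ell^1$. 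Either observation shows that $(\ell^\infty,\tau)$ is a separable abelian tracial von Neumann algebra, as required; the only point deserving a moment's care is that separability here refers to the predual and not to norm-separability (which $\ell^\infty$ famously fails).

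Next I would identify the set $P$ of minimal projections. Since $\ell^\infty=\ell^\infty(\N)$ is purely atomic, its minimal projections are exactly the coordinate projections $e_n=\1_{\{n\}}$, so that $P=\{e_n\colon n\in\N\}$ and $\tau(e_n)=w_n$. Substituting directly into the formula furnished by Theorem~\ref{thm:fSd_for_abelian_vNas} then yields
\[
    \sigma(A,\tau) = 1 - \sum_{p\in P}\tau(p)^2 = 1 - \sum_{n=1}^\infty \tau(e_n)^2
\]
for any weakly dense finitely generated unital $*$-subalgebra $A\subset\ell^\infty$, which is precisely the claimed identity.

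I do not expect any real obstacle here: the substantive content is entirely contained in Theorem~\ref{thm:fSd_for_abelian_vNas}, and the corollary is obtained simply by specializing to $M=\ell^\infty$ once the two routine verifications above—separability of the predual and the identification of the atoms with the $e_n$—are recorded.
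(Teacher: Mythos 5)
Your proposal is correct and is exactly the intended argument: the paper states this corollary immediately after Theorem~\ref{thm:fSd_for_abelian_vNas} without proof, treating it as a direct specialization to $M=\ell^\infty$, whose minimal projections are the coordinate projections $e_n$. Your two supporting verifications—that a faithful normal state on $\ell^\infty$ has the form $\sum_n w_n x_n$ with all $w_n>0$, and that separability means separable predual (or separable GNS space), which $\ell^\infty$ satisfies despite failing norm-separability—are precisely the routine checks needed to invoke the theorem.
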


\section{$L^2$-Rigidity}\label{sec:L^2_rigidity}

Let $(M,\tau)$ be a tracial von Neumann algebra. After \cite{DI16} we say a finite subset $S\subset M$ is a \emph{non-amenability set} for $M$ if there exists a constant $K>0$ such that
    \begin{align}\label{eqn:non-amenability_set}
        \|\xi\|_{\tau\otimes \tau^\circ} \leq K \sum_{x\in S} \| [x,\xi] \|_{\tau\otimes \tau^\circ} 
    \end{align}
for all $\xi\in L^2(\mmop, \tau\otimes\tau^\circ)$. It should be noted that this idea also appeared implicitly in the work of Connes, for example \cite[Theorem 5.1.6]{Con76}. The existence of such a set implies $M$ has no amenable direct summand, and by \cite{Con76} a $\II_1$ factor is non-amenable if and only if it has a non-amenability set.

If $(M,\tau)$ has a non-amenability set, then (\ref{eqn:non-amenability_set}) implies
    \begin{align}\label{eqn:vector_to_inner_derivation}
        L^2(A\otimes A^\circ,\tau\otimes \tau^\circ) \ni \xi\mapsto [\cdot,\xi]\in \InnDer(A,\tau)
    \end{align}
is injective for any finitely generated unital $*$-subalgebra $A\subset M$. Hence $L_A^2(A\otimes A^\circ,\tau\otimes\tau^\circ)=\{0\}$, and so Lemma~\ref{lem:derivations_from_non-A-central_vectors} implies
    \[
        \sigma(A,\tau) \geq \dim \overline{\InnDer(A,\tau)}_{(A\otimes A^\circ)''} = \dim L^2(A\otimes A^\circ)_{(A\otimes A^\circ)''}=1.
    \]
Thus if $\sigma(A,\tau)>1$, then $\Der_{1\otimes 1}(A,\tau)$ necessarily contains a non-inner derivation.

If one further assumes that $A$ contains a non-amenability set, then any finite generating set $X=X^*\subset A$ is a non-amenability set and (\ref{eqn:non-amenability_set}) implies the map in (\ref{eqn:vector_to_inner_derivation}) has a inverse bounded with respect to the $\|\cdot\|_X$-norm. In particular, $\InnDer(\C\<X\>,\tau)=\overline{\InnDer(\C\<X\>,\tau)}$; that is, all approximately inner derivations are actually inner. 

\begin{thm}\label{thm:L^2-rigid}
Let $M$ be a non-amenable $\II_1$ factor with trace $\tau$ and let $A\subset M$ be a weakly dense finitely generated unital $*$-subalgebra. If $A$ contains a non-amenability set and $\sigma(A,\tau)>1$, then $M$ is not $L^2$-rigid. In particular, $M$ is prime and does not have property Gamma.
\end{thm}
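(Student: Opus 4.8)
The plan is to extract from the two hypotheses a closable real derivation into the coarse bimodule that fails to be approximately inner, and then to invoke the derivation-theoretic characterization of $L^2$-rigidity from \cite{Pet09}.

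First I would manufacture the derivation. Since $A$ contains a non-amenability set, the discussion preceding the theorem shows that $\InnDer(A,\tau)$ is already closed in $\Der_{1\otimes 1}(A,\tau)$---every approximately inner derivation is inner---and that $\dim\overline{\InnDer(A,\tau)}_{(A\otimes A^\circ)''}=1$. As $\sigma(A,\tau)>1$, the module $\overline{\Der_{1\otimes 1}(A,\tau)}$ strictly contains $\InnDer(A,\tau)$, so there is a $\delta\in\Der_{1\otimes 1}(A,\tau)$ which is not inner. Because the inner derivations form a (closed) complex subspace, at least one of $\mathrm{Re}(\delta),\mathrm{Im}(\delta)$ is again non-inner; since both lie in $\Der_{1\otimes 1}(A,\tau)$, I may replace $\delta$ by whichever is non-inner and so assume $\delta$ is real. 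As $1\otimes 1\in\dom(\delta^*)$, this $\delta$ is closable, and its closure $\overline\delta$ is a closed real derivation from the weakly dense $*$-subalgebra $A$ into $L^2(M\bar\otimes M^\circ,\tau\otimes\tau^\circ)$, i.e. into the coarse $M$--$M$ bimodule.

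Next I would feed $\overline\delta$ into the machinery of \cite{Pet09}, using the non-amenability-set formalism of \cite{DI16}. The crucial upgrade afforded by the non-amenability set is that ``non-inner'' becomes ``not approximately inner'': $\delta$ is not a limit of inner derivations in the $\|\cdot\|_X$-norm (equivalently, pointwise in $L^2$ on a generating set). Passing to the symmetric Markov semigroup $e^{-t\overline\delta^*\overline\delta}$ associated to $\overline\delta$, this non-approximate-innerness is precisely what makes the resulting deformation proper, which is the obstruction that an $L^2$-rigid factor cannot support. Hence $M$ is not $L^2$-rigid.

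Finally, for the ``in particular'' clause I would invoke Peterson's structural results that factors with property Gamma, and factors admitting a nontrivial tensor-product decomposition into two $\II_1$ factors, are $L^2$-rigid \cite{Pet09}. Taking contrapositives of these, a non-$L^2$-rigid factor can have neither property Gamma nor such a decomposition; that is, $M$ is prime and does not have property Gamma. I expect the main obstacle to be the second step: reconciling the module-theoretic notion of (approximate) innerness used throughout this paper with the notion built into Peterson's definition of $L^2$-rigidity, and checking that a closable real derivation defined only on the weakly dense subalgebra $A$ is an admissible witness for the failure of rigidity. The non-amenability set is exactly what bridges this gap, simultaneously forcing the inner derivations to be closed and the associated deformation to be proper.
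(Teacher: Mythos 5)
Your construction of the derivation is exactly the paper's: since $\dim\overline{\InnDer(A,\tau)}_{(A\otimes A^\circ)''}=1<\sigma(A,\tau)$, there is a non-inner $\delta\in\Der_{1\otimes 1}(A,\tau)$, and since $\text{Re}(\delta)$ and $\text{Im}(\delta)$ both lie in $\Der_{1\otimes 1}(A,\tau)$ and cannot both be inner, one may take $\delta$ real and closable. Your treatment of the ``in particular'' clause (property Gamma and non-primeness each imply $L^2$-rigidity by \cite{Pet09}, so a non-$L^2$-rigid factor is prime without Gamma) is also how the paper concludes. The problem is the middle step, where you pass from this derivation to the failure of $L^2$-rigidity.

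The result you need is \cite[Theorem 1.1]{DI16}, whose hypotheses are: $A$ weakly dense, finitely generated, containing a non-amenability set, together with a closable real derivation $A\to L^2(M\bar\otimes M^\circ,\ttop)$ that is \emph{unbounded}. You never verify unboundedness. The paper supplies it via \cite[Theorem 2.2]{Pet09}: a bounded real derivation into the coarse bimodule is inner, so your non-inner $\delta$ is automatically unbounded, and then \cite[Theorem 1.1]{DI16} applies verbatim. What you propose instead---that the non-amenability set serves to upgrade ``non-inner'' to ``not approximately inner,'' and that non-approximate-innerness is ``precisely what makes the resulting deformation proper''---misidentifies how the pieces fit together. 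In \cite{DI16} the hypothesis on the derivation is unboundedness (approximate innerness plays no role in the statement), and the non-amenability set is consumed by their free dilation argument, not by the bookkeeping of inner versus approximately inner derivations. Moreover, if your claimed implication ``not approximately inner $\Rightarrow$ $M$ not $L^2$-rigid'' were a theorem, the non-amenability set hypothesis could be deleted from Theorem~\ref{thm:L^2-rigid} altogether, since $\sigma(A,\tau)>1$ already produces non-approximately-inner derivations in any weakly dense finitely generated $*$-subalgebra of a $\II_1$ factor; the remark immediately following the theorem in the paper poses exactly this as an open question. So as written, your second step rests on an assertion that is not known to be true, even though the repair is one line: non-inner implies unbounded by \cite[Theorem 2.2]{Pet09}, then cite \cite[Theorem 1.1]{DI16}.
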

\begin{proof}
By the discussion preceding the theorem, $\sigma(A,\tau)>1$ implies that there exists $\delta\in \Der_{1\otimes 1}(A,\tau)$ which is not inner. Then, recalling the notation from Subsection~\ref{subsec:derivation_spaces}, at least one of the real derivations
    \[
        \text{Re}(\delta)= \frac{1}{2}(\delta + \delta^\dagger)\qquad \text{ or }\qquad \text{Im}(\delta)= \frac{1}{2i}(\delta - \delta^\dagger)
    \]
is not inner, and hence unbounded by \cite[Theorem 2.2]{Pet09}. The result then follows immediately from \cite[Theorem 1.1]{DI16}.
\end{proof}

Equivalently, the above theorem states that if $M$ is an $L^2$-rigid $\II_1$ factor, then $\sigma(A,\tau)=1$ for any weakly dense finitely generated unital $*$-subalgebra $A\subset M$ containing a non-amenability set for $M$.

\begin{rem}
The hypothesis in Theorem~\ref{thm:L^2-rigid} that $A$ contains a non-amenability set is only used to apply \cite[Theorem 1.1]{DI16}. Dabrowski and Ioana observed that this a necessary hypothesis (see \cite[Remark 4.2]{DI16}); however, their counterexample to the general case is that of an approximately inner derivation. The discussion preceding Theorem~\ref{thm:L^2-rigid} shows that $\sigma(A,\tau)>1$ implies $A$ actually admits non-approximately inner derivations. It would be interesting to see if this condition on the free Stein dimension is strong enough to remove the non-amenability set hypothesis.
\end{rem}

After \cite[Definition 3.1]{Pet09} we say a finite subset $S\subset M$ is a \emph{non-Gamma set} for $M$ if there exists a constant $K>0$ such that
    \[
        \|\xi\|_\tau \leq K \sum_{x\in S} \| [x,\xi] \|_2
    \]
for all $\xi\in L^2(M,\tau)\ominus \C$. Note that this definition was motivated by \cite[Remark 4.1.6]{Pop86}. A $\II_1$ factor $M$ lacks property Gamma if and only if it has a non-Gamma set. Moreover, it follows from \cite{Con76} that a self-adjoint non-Gamma set for $M$ is also a non-amenability set for $M$ (see \cite[Lemma 2.10]{DI16} for a proof). We therefore obtain the following proposition as an immediate corollary of Theorem~\ref{thm:L^2-rigid} (or rather the discussion immediately following it) since $\II_1$ factors with property (T) are $L^2$-rigid (see \cite[Remark 4.2]{Pet09}). Since a direct proof is available and brief, we have also included it here.

\begin{prop}
Let $M$ be a $\II_1$ factor with property (T) and trace $\tau$, and let $A\subset M$ be a weakly dense finitely generated unital $*$-subalgebra. If $A$ contains a non-Gamma set for $M$, then $\sigma(A,\tau)=1$.
\end{prop}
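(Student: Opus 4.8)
The plan is to argue by contradiction and rule out $\sigma(A,\tau)>1$. Since a self-adjoint non-Gamma set is a non-amenability set, the discussion preceding Theorem~\ref{thm:L^2-rigid} already yields $\sigma(A,\tau)\ge 1$ together with $\dim\overline{\InnDer(A,\tau)}_{(A\otimes A^\circ)''}=1$, so it suffices to show $\sigma(A,\tau)\le 1$. Suppose instead $\sigma(A,\tau)>1$. Then $\overline{\Der_{1\otimes 1}(A,\tau)}$ strictly contains $\overline{\InnDer(A,\tau)}$, so there is some $\delta\in\Der_{1\otimes 1}(A,\tau)$ with $\delta\notin\overline{\InnDer(A,\tau)}$; in particular $\delta$ is not approximately inner, and hence not inner. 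Because $\overline{\InnDer(A,\tau)}$ is a complex-linear $\dagger$-invariant subspace, at least one of the real derivations $\mathrm{Re}(\delta),\mathrm{Im}(\delta)\in\Der_{1\otimes 1}(A,\tau)$ still fails to lie in $\overline{\InnDer(A,\tau)}$, so I may assume from the outset that $\delta$ is real and not inner. By \cite[Theorem~2.2]{Pet09} a real closable derivation is either inner or unbounded; thus $\delta$ is unbounded. The remaining task is to contradict the existence of such a $\delta$ using property (T).

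For the main step I would convert $\delta$ into a deformation of $M$. The quadratic form $a\mapsto\|\overline\delta a\|_{\ttop}^2$ is a completely Dirichlet form (this is where reality of $\delta$ is used), so the theory employed in \cite{Pet09} and \cite{DI16} produces a semigroup $(T_t)_{t\ge 0}$ of unital, trace-preserving, completely positive maps on $M$ with generator $\overline\delta^*\overline\delta$, i.e. $T_t=e^{-t\overline\delta^*\overline\delta}$, and $T_t\to\mathrm{id}$ pointwise in $\|\cdot\|_\tau$. Each $T_t$ gives an $M$-$M$ correspondence $H_t$ carrying a canonical unit vector $\xi_t$ with $\ang{\xi_t, x\,\xi_t\,y}=\tau(T_t(x)y)$, so that $\|x\xi_t-\xi_t x\|^2=2\ang{x,(1-T_t)x}_\tau\to 0$ for every $x\in M$. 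Hence $(\xi_t)$ is a net of asymptotically central unit vectors.

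The crux is to feed this net into property (T). In its quantitative Connes--Jones form, fixing a Kazhdan set $F\supseteq S$, I obtain a constant $\kappa>0$ with
\[
    \mathrm{dist}\big(\xi_t,\ M\text{-central vectors of }H_t\big)^2\le \kappa\sum_{x\in F}\|x\xi_t-\xi_t x\|^2 = 2\kappa\sum_{x\in F}\ang{x,(1-T_t)x}_\tau .
\]
Using the non-Gamma spectral gap on $L^2(M)\ominus\C$ (available since $S\subseteq F$) to pin down the central vectors of $H_t$, this estimate should upgrade the pointwise convergence $T_t\to\mathrm{id}$ to operator-norm convergence $\|1-T_t\|_{B(L^2(M))}\to 0$ as $t\to 0$; a semigroup that is norm-continuous at the origin has a bounded generator, forcing $\overline\delta^*\overline\delta$ and hence $\overline\delta$ to be bounded, contradicting the unboundedness of $\delta$. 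The main obstacle is precisely this last implication: turning the property (T)/non-Gamma estimate on distance to central vectors into genuine uniform (operator-norm) control of the deformation $T_t$, rather than merely pointwise control. Once that uniform control is secured the contradiction is immediate, and therefore $\sigma(A,\tau)=1$.
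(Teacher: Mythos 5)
Your reduction is sound and closely mirrors the paper's proof of Theorem~\ref{thm:L^2-rigid}: a self-adjoint non-Gamma set is a non-amenability set, so $\sigma(A,\tau)\geq 1$ and $\dim\overline{\InnDer(A,\tau)}_{(A\otimes A^\circ)''}=1$; if $\sigma(A,\tau)>1$ there is $\delta\in\Der_{1\otimes 1}(A,\tau)$ outside $\overline{\InnDer(A,\tau)}$; passing to $\mathrm{Re}(\delta)$ or $\mathrm{Im}(\delta)$ you may take $\delta$ real and non-inner; and by \cite[Theorem 2.2]{Pet09} such a $\delta$ is unbounded. The problem is everything after that. The step you yourself flag as the ``main obstacle''---upgrading the Connes--Jones property (T) estimate on the almost-central vectors $\xi_t$ of the Stinespring correspondences $H_t$ to operator-norm control of $T_t=e^{-t\overline{\delta}^*\overline{\delta}}$, hence to boundedness of the generator---is not a deferrable technicality: it \emph{is} the theorem. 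What you are attempting to re-derive from scratch is exactly \cite[Theorem 3.2]{Pet09} (every closable derivation of $A$ into the coarse bimodule is inner when $M$ has property (T) and $A$ contains a non-Gamma set), or equivalently the combination of \cite[Theorem 1.1]{DI16} with the fact that property (T) factors are $L^2$-rigid \cite[Remark 4.2]{Pet09}. In your sketch, the assertions that the non-Gamma spectral gap ``pins down the central vectors of $H_t$'' and that closeness of $\xi_t$ to central vectors yields $\|1-T_t\|_{B(L^2(M))}\to 0$ are precisely the delicate points of Peterson's deformation machinery; as written they are claimed rather than proved, so the proposal does not establish the proposition.

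For comparison, the paper's proof bypasses the contradiction argument entirely. It cites \cite[Theorem 3.2]{Pet09} to conclude that \emph{every} closable derivation on $A$ is inner, so $\Der_{1\otimes 1}(A,\tau)=\InnDer(A,\tau)$; since $M$ is diffuse there are no $A$-central vectors in $L^2(A\otimes A^\circ,\ttop)$, and Lemma~\ref{lem:derivations_from_non-A-central_vectors} then gives
\[
\sigma(A,\tau)=\dim\overline{\InnDer(A,\tau)}_{(A\otimes A^\circ)''}=\dim L^2(A\otimes A^\circ,\ttop)_{(A\otimes A^\circ)''}=1,
\]
with no need for the real-part reduction or the unboundedness dichotomy. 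The minimal repair to your argument is to replace the semigroup sketch with a citation of \cite[Theorem 3.2]{Pet09}, or to observe that your reduction reproduces the proof of Theorem~\ref{thm:L^2-rigid} and then invoke that theorem together with the $L^2$-rigidity of property (T) factors.
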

\begin{proof}
By \cite[Theorem 3.2]{Pet09}, every closable derivation on $A$ is inner. In particular, $\Der_{1\otimes 1}(A,\tau) = \InnDer(A,\tau)$. Also note that since $M$ is diffuse, there are no $A$-central vectors in $L^2(A\otimes A^\circ, \tau\otimes\tau^\circ)$. Thus Lemma~\ref{lem:derivations_from_non-A-central_vectors} implies
    \[
        \sigma(A,\tau) = \dim \overline{\InnDer(A,\tau)}_{(A\otimes A^\circ)''} = \dim L^2(A\otimes A^\circ, \tau\otimes \tau^\circ)_{(A\otimes A^\circ)''}=1.\qedhere
    \]
\end{proof}

\section{Upper and Lower Free Stein Dimensions}\label{sec:upper_lower_fSd}

By considering limits superior and inferior, we give two extensions of the free Stein dimension to $*$-algebras that are \emph{not} finitely generated.
This yields invariants for tracial non-commutative probability spaces that agree with the free Stein dimension when the $*$-algebra is finitely generated.
In general it is difficult to even show these invariants are finite, let alone compute them explicitly.
However for tracial von Neumann algebras, the results of previous sections allow us to do precisely that for a number of interesting examples.

\begin{defi}
Let $(M,\tau)$ be a tracial von Neumann algebra. We define the \textbf{upper free Stein dimension} of an inclusion of unital $*$-subalgebras $B\subset A\subset M$ as the quantity
    \[
        \overline{\sigma}(B\subset A,\tau):= \limsup_{X=X^*\subset A} \sigma(B\subset B\<X\>,\tau),
    \]
where the limit supremum is over the directed set of self-adjoint finite subsets $X$ of $A$, ordered by inclusion. Similarly, the \textbf{lower free Stein dimension} of $B\subset A$ is the quantity
    \[
        \underline{\sigma}(B\subset A,\tau):=\liminf_{X=X^*\subset A} \sigma(B\subset B\<X\>,\tau).
    \]
For $B=\C$, we simply write $\overline{\sigma}(A,\tau)$ and $\underline{\sigma}(A,\tau)$. If these two quantities agree, we write $\sigma(B\subset A,\tau)$ for their common value, which we call the \textbf{free Stein dimension} of $B\subset A$.
\end{defi}

If $A$ is finitely generated over $B$, then clearly $\sigma(B\subset A,\tau)=\overline{\sigma}(B\subset A,\tau)=\underline{\sigma}(B\subset A,\tau)$. On the other hand, if $A$ is infinitely generated over $B$ then either of $\overline{\sigma}(B\subset A,\tau)$ or $\underline{\sigma}(B\subset A,\tau)$ could be infinite. Nevertheless, both are invariants of the non-commutative probability space $(A,\tau)$ in the sense that they are fixed under trace-preserving $*$-isomorphisms. 

\begin{rem}\label{rem:starting_upper_lower_fSd_later}
For unital $*$-subalgebras $B\subset A\subset M$, fix any any unital $*$-subalgebra $B\subset A_0 \subset A$ which is finitely generated over $B$. It follows that
    \[
        \overline{\sigma}(B\subset A,\tau) = \limsup_{X=X^*\subset A} \sigma(B\subset A_0\<X\>,\tau),
    \]
and similarly for the lower free Stein dimension.
\end{rem}

For the remainder of the section we focus on tracial non-commutative probability spaces coming from tracial von Neumann algebras. We begin by extending some of the properties for the free Stein dimension to the upper and lower free Stein dimensions of tracial von Neumann algebras.

\begin{prop}\label{prop:upper_lower_fSd_properties}
Let $(M,\tau)$ be a tracial von Neumann algebra.
\begin{enumerate}[label=(\arabic*)]
    \item If $M$ contains a self-adjoint diffuse element, then $\underline{\sigma}(M,\tau)\geq 1$.
    
    \item For any finite dimensional unital $*$-subalgebra $B\subset M$, one has
        \[
            \overline{\sigma}(M,\tau) = \overline{\sigma}(B\subset M,\tau) + \sigma(B,\tau)
        \]
    and
        \[
            \underline{\sigma}(M,\tau) = \underline{\sigma}(B\subset M,\tau) + \sigma(B,\tau).
        \]
    
    \item Suppose
         \[
             (M,\tau)=\left(\bigoplus_{i=1}^d M_i,\sum_{i=1}^d \alpha_i \tau_i\right),
        \]
    for a family $\{ (M_i,\tau_i)\}_{i=1}^d$ of tracial von Neumann algebras and scalars $\alpha_1,\ldots, \alpha_d> 0$ satisfying $\sum_{i=1}^d \alpha_i =1$. Then
        \[
            \overline{\sigma}\left(\bigoplus_{i= 1}^d M_i, \sum_{i=1}^d \alpha_i \tau_i \right) \leq \sum_{i=1}^d \alpha_i^2 \overline{\sigma}(M_i,\tau_i) + \alpha_i (1-\alpha_i).
        \]
    and
        \[
            \underline{\sigma}\left(\bigoplus_{i= 1}^d M_i, \sum_{i=1}^d \alpha_i \tau_i \right) \geq \sum_{i=1}^d \alpha_i^2 \underline{\sigma}(M_i,\tau_i) + \alpha_i (1-\alpha_i).
        \]
    If $\sigma(M_i,\tau_i)$ exists for all but possibly one $i=1,\ldots, d$, then the above are equalities.
    
    \item For any finite dimensional tracial von Neumann algebra $(B,\phi)$, one has
        \[
            \overline{\sigma}(M\otimes B,\tau\otimes \phi) \leq  \overline{\sigma}(M,\tau)+ \sigma(B,\phi) - \overline{\sigma}(M,\tau) \sigma(B,\phi)
        \]
    and
        \[
            \underline{\sigma}(M\otimes B,\tau\otimes \phi) \geq  \underline{\sigma}(M,\tau)+ \sigma(B,\phi) - \underline{\sigma}(M,\tau) \sigma(B,\phi).
        \]
    If $B$ is a factor, then these are equalities.
\end{enumerate} 
\end{prop}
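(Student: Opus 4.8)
The plan is to reduce each assertion to its finitely generated counterpart established earlier—Theorems~\ref{thm:excising_a_finite_dimensional_subalgebra}, \ref{thm:direct_sum_formula}, \ref{thm:amplification_formula} and Corollary~\ref{cor:tensor_against_finite_dimensional_formula}—and then interchange the resulting formula with the limit superior/inferior, using Remark~\ref{rem:starting_upper_lower_fSd_later} throughout to arrange that every generating set under consideration contains a convenient fixed finite set. For (1), I would apply Remark~\ref{rem:starting_upper_lower_fSd_later} with $A_0=\C\<x_0\>$ for a self-adjoint diffuse $x_0\in M$, so that $\underline{\sigma}(M,\tau)=\liminf_X \sigma(\C\<\{x_0\}\cup X\>,\tau)$; every algebra appearing then contains the diffuse element $x_0$, whence $\sigma\geq 1$ by \cite[Theorem~3.8]{CN21}, and the liminf inherits the bound. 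For (2), taking $A_0=B$ gives $\overline{\sigma}(M,\tau)=\limsup_X \sigma(B\<X\>,\tau)$, and excising the finite-dimensional $B\subseteq B\<X\>$ via Theorem~\ref{thm:excising_a_finite_dimensional_subalgebra} yields $\sigma(B\<X\>,\tau)=\sigma(B\subset B\<X\>,\tau)+\sigma(B,\tau)$; since $\sigma(B,\tau)$ is a constant it passes through the limsup and the liminf, and the claim follows from the definitions of $\overline{\sigma}(B\subset M,\tau)$ and $\underline{\sigma}(B\subset M,\tau)$.

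For (3), fix $A_0=\C\<1_1,\dots,1_d\>$, the span of the central units $1_i$ of the summands, and restrict to $X$ with all $1_i\in\C\<X\>$ via Remark~\ref{rem:starting_upper_lower_fSd_later}. For such $X$, centrality of the $1_i$ forces $\C\<X\>=\bigoplus_i \C\<X_i\>$ with $X_i:=\{1_i x 1_i : x\in X\}\subset M_i$, so Theorem~\ref{thm:direct_sum_formula} gives $\sigma(\C\<X\>,\tau)=\sum_i \alpha_i^2\sigma(\C\<X_i\>,\tau_i)+\alpha_i(1-\alpha_i)$. The inequalities follow from subadditivity of $\limsup$ (resp. superadditivity of $\liminf$) together with the reindexing fact $\limsup_X \sigma(\C\<X_i\>,\tau_i)=\overline{\sigma}(M_i,\tau_i)$ and $\liminf_X \sigma(\C\<X_i\>,\tau_i)=\underline{\sigma}(M_i,\tau_i)$, which I would prove directly from the observation that $X\mapsto X_i$ is monotone and cofinally surjective onto finite subsets of $M_i$. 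When $\sigma(M_i,\tau_i)$ exists for all but one index, the corresponding nets converge, hence may be pulled out of the limsup/liminf, leaving a single genuine limit and forcing equality.

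For (4), I would first dispatch the factor case $B=M_n(\C)$. The structural observation is that any unital $*$-subalgebra $A$ with $1\otimes M_n(\C)\subseteq A\subseteq M\otimes M_n(\C)$ is automatically of product form: compressing by the matrix units $1\otimes e_{jk}$ shows the corner $(1\otimes e_{11})A(1\otimes e_{11})$ is generated by the matrix entries $S$ of any generating set, and reassembling recovers $A=\C\<S\>\otimes M_n(\C)$. Thus every algebra in the net computing $\overline{\sigma}(M\otimes M_n(\C))$ equals $\C\<S\>\otimes M_n(\C)$ for finite $S\subset M$, and Corollary~\ref{cor:tensor_against_finite_dimensional_formula} gives $\sigma(\C\<S\>\otimes M_n(\C),\tau\otimes\tr_n)=f(\sigma(\C\<S\>,\tau))$ where $f(t)=t+\sigma(B,\phi)-t\,\sigma(B,\phi)$ is affine and increasing. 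Since $f$ is a continuous increasing bijection and $S=S(X)$ ranges cofinally over finite subsets of $M$ (the interleaving of (3)), both limsup and liminf pass through $f$, giving the equalities $\overline{\sigma}(M\otimes M_n(\C))=f(\overline{\sigma}(M,\tau))$ and $\underline{\sigma}(M\otimes M_n(\C))=f(\underline{\sigma}(M,\tau))$. For general finite-dimensional $B=\bigoplus_i M_{n_i}(\C)$ one has $M\otimes B=\bigoplus_i M\otimes M_{n_i}(\C)$, so applying part (3) and substituting the factor-case values reproduces, by the same algebra as in Corollary~\ref{cor:tensor_against_finite_dimensional_formula}, the claimed inequalities; equality when $B$ is a factor is precisely the factor case just established.

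The main obstacle is the interchange of the limit superior/inferior with the algebraic formulas, which cannot be done by quoting Theorem~\ref{thm:direct_sum_formula} or Corollary~\ref{cor:tensor_against_finite_dimensional_formula} naively: generating sets of a direct sum or tensor product need not respect the decomposition, and the free Stein dimension is \emph{not} monotone under inclusion, so passing to the cofinal subnet of product-form algebras yields only one inequality in general. The delicate point is therefore the reindexing lemma $\limsup_X \sigma(\C\<X_i\>)=\overline{\sigma}(M_i)$ (and its liminf analogue), resting on $X\mapsto X_i$ being monotone and cofinally surjective, and—crucially in (4)—the fact that tensoring with a \emph{factor} forces every intermediate algebra into product form, which is exactly what upgrades the one-sided subnet inequality into the desired equality.
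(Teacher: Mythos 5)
Your proposal is correct and takes essentially the same approach as the paper's proof: reduce each part to the finitely generated formulas (Theorem~\ref{thm:excising_a_finite_dimensional_subalgebra}, Theorem~\ref{thm:direct_sum_formula}, Theorem~\ref{thm:amplification_formula}) via Remark~\ref{rem:starting_upper_lower_fSd_later}, exploit the structural decompositions forced by adjoining the central units (resp.\ matrix units) to the generating sets, and pass the limit superior/inferior through the resulting affine expressions. The only cosmetic differences are that you invoke Corollary~\ref{cor:tensor_against_finite_dimensional_formula} where the paper uses Theorem~\ref{thm:amplification_formula} in the factor case of (4), and that you make explicit the monotone-cofinal reindexing argument that the paper leaves implicit.
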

\begin{proof}{\color{white},}

\noindent
\textbf{(1):} Let $x\in M$ be self-adjoint and diffuse. If $X=\{x\}$, then by \cite[Theorem 3.8]{CN21}
    \[
        \underline{\sigma}(M)\geq \inf_{Y\supset X} \sigma(Y) \geq 1.
    \]\\
 
\noindent\textbf{(2):} This follows immediately from Remark~\ref{rem:starting_upper_lower_fSd_later} and Theorem~\ref{thm:excising_a_finite_dimensional_subalgebra}.

\noindent\textbf{(3):} Let $1_i\in M_j$ denote the unit for each $i=1,\ldots, d$. For any finitely generated $*$-subalgebra $A\subset \bigoplus_{i=1}^d M_i$ containing $\{1_i\}_{i=1}^d$, one has $A= \bigoplus_{i=1}^d A 1_i$. Thus, by Remark~\ref{rem:starting_upper_lower_fSd_later} and Theorem~\ref{thm:direct_sum_formula} we have
    \begin{align*}
        \overline{\sigma}(M,\tau) &= \limsup_{X=X^*\subset M} \sigma( \C\<X\cup\{1_i\}_{i=1}^d\>, \tau)\\
            &\leq \sum_{i=1}^d \alpha_i^2 \limsup_{X=X^*\subset M}\sigma( \C\<X\> 1_i, \tau_i) + \alpha_i (1-\alpha_i) =  \sum_{i=1}^d \alpha_i^2 \overline{\sigma}( M, \tau_i) + \alpha_i (1-\alpha_i).
    \end{align*}
The inequality above is a consequence of the subaddivity of the limit supremum. Thus if $\sigma(M_i,\tau_i)$ exists for all but possibly one $i=1,\ldots, d$, then we in fact have equality above. The proof for the lower free Stein dimension is similar.\\

\noindent\textbf{(4):} First assume $B=M_n(\C)$ is a factor with $\phi=\tr_n$ and let $\{e_{i,j}\colon 1\leq i,j\leq n\}\subset M_n(\C)$ be a family of matrix units. For any finitely generated $*$-subalgebra $A\subset M\otimes M_n(\C)$ containing $\{1\otimes e_{i,j}\colon 1\leq i,j\leq n\}$, one has $A=(1\otimes \Tr)(A)\otimes M_n(\C)$. Thus, by Remark~\ref{rem:starting_upper_lower_fSd_later} and Theorem~\ref{thm:amplification_formula} we have
    \begin{align*}
        \overline{\sigma}(M\otimes M_n(\C),\tau\otimes\tr_n) &= \limsup_{X=X^*\subset M\otimes M_n(\C)} \sigma( \C\< X\cup\{1\otimes e_{i,j}\colon 1\leq i,j\leq n\}\>, \tau\otimes \tr_n)\\
            &= 1+ \frac{1}{n^2}  \limsup_{X=X^*\subset M\otimes M_n(\C)} \sigma( (1\otimes \Tr)(\C\<X\>), \tau) - \frac{1}{n^2}\\
            &= \frac{1}{n^2}\overline{\sigma}(M,\tau) + (1- \frac{1}{n^2})\\
            &= \overline{\sigma}(M,\tau) + \sigma(B,\phi) - \overline{\sigma}(M,\tau)\sigma(B,\phi)
    \end{align*}
For the general case, $B$ is necessarily a multimatrix algebra. Using the previous part and the factor case, one proceeds as in the proof of Corollary~\ref{cor:tensor_against_finite_dimensional_formula}. The proof for the lower free Stein dimension is similar.
\end{proof}

\begin{cor}
Let $(M,\tau)$ be a tracial von Neumann algebra that admits a unital inclusion of the hyperfinite $\II_1$ factor $\mathcal{R}$. Then
    \[
        \overline{\sigma}(\mathcal{R}\subset M,\tau) \leq \overline{\sigma}(M,\tau) - 1
    \]
and
    \[
        \underline{\sigma}(\mathcal{R}\subset M,\tau) \leq \underline{\sigma}(M,\tau)  - 1.
    \]
\end{cor}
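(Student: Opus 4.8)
The plan is to realize $\mathcal{R}$ as the closure of an increasing union of finite dimensional subalgebras and to push the exact excision formula of Proposition~\ref{prop:upper_lower_fSd_properties}(2) through a limit. Concretely, I would fix an increasing sequence of matrix subalgebras $B_1\subseteq B_2\subseteq\cdots\subseteq\mathcal{R}$, say $B_k\cong M_{2^k}(\C)$, whose union is weakly dense in $\mathcal{R}$. By \cite[Corollary 5.2]{CN21} one has $\sigma(B_k,\tau)=1-2^{-2k}\nearrow 1$, and Proposition~\ref{prop:upper_lower_fSd_properties}(2) gives, for every $k$,
\[
    \overline{\sigma}(M,\tau)=\overline{\sigma}(B_k\subset M,\tau)+\sigma(B_k,\tau),\qquad \underline{\sigma}(M,\tau)=\underline{\sigma}(B_k\subset M,\tau)+\sigma(B_k,\tau).
\]
It therefore suffices to establish the base-monotonicity $\overline{\sigma}(\mathcal{R}\subset M,\tau)\le\overline{\sigma}(B_k\subset M,\tau)$ and $\underline{\sigma}(\mathcal{R}\subset M,\tau)\le\underline{\sigma}(B_k\subset M,\tau)$ for each $k$: letting $k\to\infty$ then yields both claimed inequalities, since $\sigma(B_k,\tau)\to 1$.

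Because the upper and lower free Stein dimensions are respectively a $\limsup$ and a $\liminf$ over the \emph{same} directed set of finite subsets $X=X^*\subseteq M$, both monotonicity statements follow from the single pointwise comparison
\[
    \sigma(\mathcal{R}\subset\mathcal{R}\ang{X},\tau)\le\sigma(B_k\subset B_k\ang{X},\tau)\qquad\text{for every finite }X=X^*\subseteq M.
\]
To produce it I would restrict and compress derivations. Given $\delta\in\Der_{1\otimes 1}(\mathcal{R}\subset\mathcal{R}\ang{X},\tau)$, its restriction to $B_k\ang{X}$ vanishes on $B_k\subseteq\mathcal{R}$; composing with the orthogonal projection $e_k$ of $L^2(\mathcal{R}\ang{X}\otimes\mathcal{R}\ang{X}^\circ,\ttop)$ onto $L^2(B_k\ang{X}\otimes B_k\ang{X}^\circ,\ttop)$---which is the $L^2$-extension of the trace preserving conditional expectation, hence a $B_k\ang{X}$-bimodule map fixing $1\otimes 1$---yields $e_k\circ\delta|_{B_k\ang{X}}\in\Der_{1\otimes 1}(B_k\subset B_k\ang{X},\tau)$ (the Leibniz rule survives because $e_k$ is a bimodule map, and $1\otimes 1\in\dom$ of the new adjoint because $e_k(1\otimes 1)=1\otimes 1$). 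Via $\phi_X$ (Lemma~\ref{lem:derivations_as_Hilbert_spaces}) this compresses $\phi_X(\Der_{1\otimes 1}(\mathcal{R}\subset\mathcal{R}\ang{X},\tau))$ into $\phi_X(\Der_{1\otimes 1}(B_k\subset B_k\ang{X},\tau))$.

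The main obstacle is the comparison of von Neumann dimensions across the two different algebras $N:=(\mathcal{R}\ang{X}\otimes\mathcal{R}\ang{X}^\circ)''\supseteq N_B:=(B_k\ang{X}\otimes B_k\ang{X}^\circ)''$. For fixed $k$ the compression $e_k$ is \emph{not} injective---a derivation vanishing on $\mathcal{R}$ may take values orthogonal to $L^2(B_k\ang{X}\otimes B_k\ang{X}^\circ)$---and $N_B\subseteq N$ has infinite index, so neither the naive compression bound nor restriction of scalars gives the inequality on the nose. The structural fact that should rescue the argument is that $L^2(B_k\ang{X}\otimes B_k\ang{X}^\circ,\ttop)$ increases to $L^2(\mathcal{R}\ang{X}\otimes\mathcal{R}\ang{X}^\circ,\ttop)$, so these kernels shrink to zero and the maps become asymptotically injective. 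Concretely I would prove the induction inequality $\dim_N\overline{WN}\le\dim_{N_B}W$ for a right $N_B$-module $W\subseteq L^2(N_B)^X$ (right supports can only shrink when passed from $N_B$ to $N$ at the common trace), and check that $\phi_X(\Der_{1\otimes 1}(\mathcal{R}\subset\mathcal{R}\ang{X},\tau))$ lands in the $N$-module generated by the compressed $B_k$-derivations; this delivers the clean pointwise bound for every $k$. I expect verifying this dimension comparison to be the technically delicate step.

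Granting the pointwise bound, taking $\limsup_X$ and $\liminf_X$ gives the base-monotonicity, and letting $k\to\infty$ in the two excision identities produces $\overline{\sigma}(\mathcal{R}\subset M,\tau)\le\overline{\sigma}(M,\tau)-1$ and $\underline{\sigma}(\mathcal{R}\subset M,\tau)\le\underline{\sigma}(M,\tau)-1$. By Remark~\ref{rem:starting_upper_lower_fSd_later} one may start all of these limits at a common finitely generated subalgebra, which keeps the intermediate quantities finite and the limiting procedure well-behaved.
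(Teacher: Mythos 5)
Your outline is essentially the paper's own proof: fix finite-dimensional matrix subalgebras of $\mathcal{R}$, combine the excision identity of Proposition~\ref{prop:upper_lower_fSd_properties}(2) with $\sigma(M_n(\C),\tr_n)=1-\tfrac{1}{n^2}$, invoke monotonicity of the relative upper/lower free Stein dimension when the base is enlarged from a matrix algebra to $\mathcal{R}$, and let the matrix size tend to infinity. (The paper uses a single unital copy of $M_n(\C)\subset\mathcal{R}$ for each $n$ rather than an increasing tower; the density of $\bigcup_k B_k$ in $\mathcal{R}$ plays no role in your argument either.) The one real difference is that the paper asserts the monotonicity $\overline{\sigma}(\mathcal{R}\subset M,\tau)\le\overline{\sigma}(M_n(\C)\subset M,\tau)$ without comment, whereas you correctly isolate it as the step requiring proof, reduce it to a pointwise inequality via compression of derivations, and then leave the resulting module-dimension comparison unverified.

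That comparison is in fact true, so your plan closes, and with no ``asymptotic injectivity'' needed: the pointwise bound holds for each fixed $k$. Fix finite $X=X^*\subset M$, write $N_B=(B_k\ang{X}\otimes B_k\ang{X}^\circ)''\subset N=(\mathcal{R}\ang{X}\otimes\mathcal{R}\ang{X}^\circ)''$ (a trace-preserving inclusion), let $e$ be the Jones projection of $N_B\subset N$ applied entrywise to $L^2(N)^X$, and let $\bar D=q_DL^2(N)^X$ be the closure of $\phi_X(\Der_{1\otimes 1}(\mathcal{R}\subset\mathcal{R}\ang{X},\tau))$, with $q_D\in M_X(N)$ the corresponding projection. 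Your claim (a) holds with equality: if $W=qL^2(N_B)^X$ with $q\in M_X(N_B)$, then $\overline{WN}=qL^2(N)^X$, so $\dim_N\overline{WN}=\dim_{N_B}W$. Your claim (b) is automatic rather than delicate: writing $\overline{e(\bar D)}=q'L^2(N_B)^X$ with $q'\in M_X(N_B)$, every $\zeta\in(1-q')L^2(N_B)^X$ is orthogonal to $e(\bar D)$ and hence to $\bar D$ (since $\ang{\zeta,\eta}=\ang{\zeta,e\eta}$ for $\zeta\in L^2(N_B)^X$), so $q_D(1-q')$ annihilates $L^2(N_B)^X$; applying it to vectors in $N_B^{\oplus X}$ with a single nonzero entry equal to $1$ and using faithfulness of the trace forces $q_D(1-q')=0$, i.e.\ $q_D\le q'$. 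Consequently $\bar D\subseteq q'L^2(N)^X=\overline{e(\bar D)N}$ and
\[
\sigma(\mathcal{R}\subset\mathcal{R}\ang{X},\tau)=\dim_N\bar D\le\dim_{N_B}\overline{e(\bar D)}\le\dim_{N_B}\overline{\phi_X(\Der_{1\otimes 1}(B_k\subset B_k\ang{X},\tau))}=\sigma(B_k\subset B_k\ang{X},\tau),
\]
where the last inequality uses exactly your compression step: $e\circ\delta|_{B_k\ang{X}}\in\Der_{1\otimes 1}(B_k\subset B_k\ang{X},\tau)$ because the Jones projection is an $N_B$-bimodule map fixing $1\otimes 1$, so $e(\bar D)$ lies in the closure of $\phi_X(\Der_{1\otimes 1}(B_k\subset B_k\ang{X},\tau))$. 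With this pointwise bound in hand, your passage to $\limsup$/$\liminf$ over the common directed set of finite $X\subset M$, followed by $k\to\infty$ in the excision identities, finishes the proof exactly as in the paper.
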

\begin{proof}
For each $n\in \N$ we can find a unital inclusion $M_n(\C)\subset \mathcal{R}\subset M$. So using Proposition~\ref{prop:upper_lower_fSd_properties}.(2) we have
    \[
        \overline{\sigma}(\mathcal{R}\subset M,\tau) \leq \overline{\sigma}(M_n(\C)\subset M,\tau) = \overline{\sigma}(M,\tau) - 1 + \frac{1}{n^2}.
    \]
Taking the limit as $n\to\infty$ yields the desired inequality. The proof for the lower free Stein dimension is identical.
\end{proof}

Using the results of the previous sections, we can show the free Stein dimension exists and compute it for certain tracial von Neumann algebras.

\begin{thm}\label{thm:fSd_is_one_if}
Let $(M,\tau)$ be a tracial von Neumann algebra. If any of the following hold, then $\sigma(M,\tau)=1$.
    \begin{enumerate}[label=(\arabic*)]
    \item $M$ has diffuse center.
    \item $M$ is a finitely generated $L^2$-rigid $\II_1$ factor.
    \item $M$ is a $\II_1$ factor with $\overline{\sigma}(M,\tau)<\infty$ and whose fundamental group contains an element of $\Q\setminus \{1\}$.
    \end{enumerate}
\end{thm}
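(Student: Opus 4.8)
The plan is to handle the three cases separately, in each instance reducing the limit superior/inferior to the already-established finitely generated theory by means of Remark~\ref{rem:starting_upper_lower_fSd_later}. Throughout, recall that $\overline\sigma$ and $\underline\sigma$ are invariants under trace-preserving $*$-isomorphisms and that $\underline\sigma(M,\tau)\le\overline\sigma(M,\tau)$.

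For (1), I would first record the standard fact that a diffuse center $Z(M)$ contains a self-adjoint element $z$ with non-atomic spectral distribution, i.e. a diffuse central element of $M$. Fixing $A_0=\C\<z\>$, Remark~\ref{rem:starting_upper_lower_fSd_later} expresses $\overline\sigma(M,\tau)$ and $\underline\sigma(M,\tau)$ as the limit superior and inferior of $\sigma(\C\<\{z\}\cup X\>,\tau)$ over finite $X=X^*\subset M$. Each such algebra is finitely generated and has $z$ as a diffuse central element, so Corollary~\ref{cor:diffuse_center} gives $\sigma(\C\<\{z\}\cup X\>,\tau)=1$ for every $X$. The net is constant, so both limits equal $1$ and $\sigma(M,\tau)=1$.

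For (2), the first point is that an $L^2$-rigid $\II_1$ factor is non-amenable: an amenable $\II_1$ factor is $\mathcal{R}$, which has property Gamma, whereas $L^2$-rigid factors do not \cite{Pet09}. Thus $M$ admits a non-amenability set $S$ by \cite{Con76}. Since $M$ is finitely generated, I would choose a finite self-adjoint generating set and enlarge it to $X_0=X_0^*$ containing $S$, so that $A_0:=\C\<X_0\>$ is weakly dense, finitely generated, and contains $S$. For any finite $X=X^*\subset M$ the algebra $A_0\<X\>$ remains weakly dense (it contains $A_0$), finitely generated, and contains the non-amenability set $S$; the equivalent form of Theorem~\ref{thm:L^2-rigid} then yields $\sigma(A_0\<X\>,\tau)=1$. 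Applying Remark~\ref{rem:starting_upper_lower_fSd_later} with this $A_0$ forces both $\overline\sigma(M,\tau)$ and $\underline\sigma(M,\tau)$ to equal $1$.

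Case (3) is the crux, and here I would exploit the group structure of the fundamental group. Write the given element of $\mathcal{F}(M)\cap(\Q\setminus\{1\})$ as $p/q$ with $p,q\in\N$ coprime and $p\neq q$. Since $M^{p/q}\cong M$ trace-preservingly, amplifying both sides by $q$ gives $M^p\cong M^q$, that is $M\otimes M_p(\C)\cong M\otimes M_q(\C)$ as tracial von Neumann algebras. Applying the equality case of Proposition~\ref{prop:upper_lower_fSd_properties}.(4) with the factors $B=M_p(\C)$ and $B=M_q(\C)$ (using $\sigma(M_n(\C),\tr_n)=1-1/n^2$) yields
\[ \overline\sigma\paren{M\otimes M_n(\C)} = 1 + \tfrac{1}{n^2}\bigl(\overline\sigma(M,\tau)-1\bigr), \qquad n\in\{p,q\}. \]
Invariance of $\overline\sigma$ forces these two values to agree, so $\bigl(\tfrac{1}{p^2}-\tfrac{1}{q^2}\bigr)(\overline\sigma(M,\tau)-1)=0$; as $p\neq q$ and $\overline\sigma(M,\tau)<\infty$, this gives $\overline\sigma(M,\tau)=1$. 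The identical computation for $\underline\sigma$ (finite since $\underline\sigma\le\overline\sigma<\infty$) gives $\underline\sigma(M,\tau)=1$, whence $\sigma(M,\tau)=1$.

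The main obstacle is the bookkeeping in case (3): one must convert the relation $M^{p/q}\cong M$ into an honest isomorphism between two \emph{integer} amplifications so that the factor-case equality in Proposition~\ref{prop:upper_lower_fSd_properties}.(4) is available, and it is precisely at the final step that the finiteness hypothesis $\overline\sigma(M,\tau)<\infty$ is indispensable, since otherwise the displayed equation is vacuous. A secondary subtlety, in case (2), is checking that $A_0\<X\>$ stays weakly dense and retains a non-amenability set for every $X$, so that the hypotheses of Theorem~\ref{thm:L^2-rigid} are genuinely met along the entire net.
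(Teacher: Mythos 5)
Your treatments of (1) and (3) are correct and, modulo cosmetic differences, coincide with the paper's own argument. In (1) you make the net constant by adjoining a diffuse central element and invoking Corollary~\ref{cor:diffuse_center}, exactly as the paper does. In (3) the paper compresses rather than amplifies: it takes a projection $p$ with $\tau(p)=\tfrac1n$, uses $M\cong pMp\otimes M_d(\C)$ (from the fundamental group) together with $M\cong pMp\otimes M_n(\C)$ (always true), and derives two incompatible formulas for $\overline{\sigma}(pMp,\tau^{(p)})$; you instead amplify $M^{p/q}\cong M$ to $M\otimes M_p(\C)\cong M\otimes M_q(\C)$ and compare two formulas for $\overline{\sigma}(M\otimes M_n(\C))$. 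Both routes rest on the same ingredients --- the equality case of Proposition~\ref{prop:upper_lower_fSd_properties}.(4) for matrix factors, invariance of $\overline{\sigma}$, $\underline{\sigma}$ under trace-preserving isomorphism, and the finiteness hypothesis --- and yours is, if anything, marginally cleaner since it never leaves $M$.

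The genuine gap is in (2), in your justification that $M$ is non-amenable. You assert that $L^2$-rigid factors do not have property Gamma, citing \cite{Pet09}; this is exactly backwards. Peterson's theorem is that property Gamma (and likewise non-primeness) \emph{implies} $L^2$-rigidity --- indeed this is precisely what underwrites the ``in particular'' clause of Theorem~\ref{thm:L^2-rigid}: \emph{not} $L^2$-rigid $\Rightarrow$ prime and no property Gamma. Consequently the hyperfinite factor $\mathcal{R}$, being non-prime and having property Gamma, \emph{is} $L^2$-rigid, and your chain ``amenable $\Rightarrow$ hyperfinite $\Rightarrow$ Gamma $\Rightarrow$ not $L^2$-rigid'' fails at the last arrow. (This reversed implication is also the reason the non-amenability-set hypothesis in \cite{DI16} is needed at all; see \cite[Remark 4.2]{DI16}.) The remainder of your argument for (2) --- enlarging a generating set to a self-adjoint one containing a non-amenability set, and running Remark~\ref{rem:starting_upper_lower_fSd_later} against the reformulation of Theorem~\ref{thm:L^2-rigid} --- is exactly the paper's proof, but it genuinely requires $M$ to admit a non-amenability set, i.e.\ to be non-amenable. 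At this step the paper simply asserts ``since $M$ is an $L^2$-rigid $\II_1$ factor, it is necessarily non-amenable'' without argument; your attempt to supply a reason invokes a false statement, so as written your proof of (2) does not go through, and any repair must either quote the paper's assertion or find a correct argument for non-amenability rather than the one you gave.
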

\begin{proof}
All three conditions imply $\underline{\sigma}(M,\tau)\geq 1$ by Proposition~\ref{prop:upper_lower_fSd_properties}.(1), and so it suffices to show $\overline{\sigma}(M,\tau)\leq 1$.

\noindent \textbf{(1):} Let $x_0\in M\cap M'$ be a self-adjoint diffuse element. Then by Remark~\ref{rem:starting_upper_lower_fSd_later} and Corollary~\ref{cor:diffuse_center} one has
    \[
        \overline{\sigma}(M,\tau) = \limsup_{X=X^*\subset M} \sigma(\C\<X\cup\{x_0\}\>,\tau) = 1.
    \]\\

\noindent \textbf{(2):} Since $M$ is an $L^2$-rigid $\II_1$ factor, it is necessarily non-amenable. Let $X_0=X_0^*\subset M$ be a non-amenability set which generates $M$. By Remark~\ref{rem:starting_upper_lower_fSd_later} and Theorem~\ref{thm:L^2-rigid} we have
    \[
        \overline{\sigma}(M,\tau)= \limsup_{X=X^*\subset M}\sigma( \C\<X\cup X_0\>,\tau) = 1.
    \]\\

\noindent \textbf{(3):} Let $q\in \Q\setminus\{1\}$ be in the fundamental group of $M$. Without loss of generality, $0<q<1$, so that $q=\frac{d}{n}$ for $n\in \N$ and $d\in \{1,\ldots, n-1\}$. Let $p\in M$ be a projection with $\tau(p)=\frac{1}{n}$. Then $(M,\tau)\cong (pMp\otimes M_d(\C), \tau^{(p)}\otimes \tr_d)$ and so by Proposition~\ref{prop:upper_lower_fSd_properties}.(4) we have
    \[
        \overline{\sigma}(M,\tau) = \overline{\sigma}(pMp,\tau^{(p)}) + \left( 1- \frac{1}{d^2} \right) - \left( 1 - \frac{1}{d^2}\right) \overline{\sigma}(pMp,\tau^{(p)}) = \frac{1}{d^2}\overline{\sigma}(pMp,\tau^{(p)}) + 1 - \frac{1}{d^2}.
    \]
Hence $\overline{\sigma}(pMp,\tau^{(p)}) = 1 + d^2(\overline{\sigma}(M,\tau) -1)$. Since $\tau(p)=\frac{1}{n}$ implies $(M,\tau)\cong (pMp\otimes M_n(\C),\tau^{(p)}\otimes \tr_n)$, the same computation implies we also have $\overline{\sigma}(pMp,\tau^{(p)}) = 1 + n^2(\overline{\sigma}(M,\tau) -1)$. Since $\overline{\sigma}(M,\tau)<\infty$ and $d\neq n$, we must therefore have $\overline{\sigma}(M,\tau)=1$.
\end{proof}

Condition $(1)$ of the previous theorem implies $\sigma(M,\tau)=1$ whenever $M$ is a diffuse abelian von Neumann algebra.
We can also readily extend Theorem~\ref{thm:fSd_for_abelian_vNas} to show that the free Stein dimension exists for any abelian von Neuman algebra, even those which are not separable:

\begin{thm}\label{thm:extended_fSd_abelian_vNas}
Let $(M,\tau)$ be an abelian tracial von Neumann algebra. Then
    \[
        \sigma(M,\tau) = 1 - \sum_{p} \tau(p)^2
    \]
where the sum is over the minimal projections of $M$. In particular, for a probability space $(X,\Omega,\mu)$ with atoms $\mathcal{A}\subset \Omega$ one has
    \[
        \sigma(L^\infty(X,\mu), \mathbb{E}) = 1 - \sum_{A\in\mathcal{A}} \mu(A)^2.
    \]
\end{thm}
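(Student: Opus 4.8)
The plan is to reduce everything to the separable case handled in Theorem~\ref{thm:fSd_for_abelian_vNas} and then analyze the resulting net of dimensions. For a finite self-adjoint $X\subset M$, the algebra $N_X:=\C\<X\>''$ is a separable abelian tracial von Neumann algebra in which $\C\<X\>$ is weakly dense, so Theorem~\ref{thm:fSd_for_abelian_vNas} gives $\sigma(\C\<X\>,\tau)=1-S(X)$, where $S(X):=\sum_{q}\tau(q)^2$ and the sum runs over the minimal projections of $N_X$. Since $\tau$ is faithful and normal, $M$ has only countably many minimal projections $P=\{p_i\}$, and $\sum_{p\in P}\tau(p)^2<\infty$. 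It therefore suffices to show that the net $(S(X))_X$, directed by inclusion, converges to $\sum_{p\in P}\tau(p)^2$; this forces $\overline{\sigma}(M,\tau)=\underline{\sigma}(M,\tau)=1-\sum_{p\in P}\tau(p)^2$, so that $\sigma(M,\tau)$ exists with the claimed value.

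The crucial step is monotonicity of $S$: if $X\subseteq Y$ then $S(Y)\le S(X)$. To see this, let $z_X$ be the atomic part of $N_X$, i.e.\ the sum of its minimal projections, and let $q$ be a minimal projection of $N_Y$. Because $q$ is minimal in $N_Y\supseteq N_X$, the projection $q(1-z_X)$ is either $0$ or $q$; were it equal to $q$, then the rule $aq=\lambda q$ for $a\in N_X(1-z_X)$ would define a normal character of $N_X(1-z_X)$, which is impossible since that algebra is diffuse. Hence $q\le z_X$, so $q$ lies beneath a unique minimal projection $q'$ of $N_X$. Thus the minimal projections of $N_Y$ refine those of $N_X$, and grouping them accordingly,
\[
S(Y)=\sum_{q'}\sum_{q\le q'}\tau(q)^2\le \sum_{q'}\Big(\sum_{q\le q'}\tau(q)\Big)^2\le\sum_{q'}\tau(q')^2=S(X),
\]
using $\sum a_i^2\le(\sum a_i)^2$ for nonnegative reals and $\sum_{q\le q'}\tau(q)\le\tau(q')$. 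Consequently $(S(X))_X$ is non-increasing, hence converges to $\inf_X S(X)$, and its limit superior and inferior coincide.

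It remains to identify $\inf_X S(X)$ with $\sum_{p\in P}\tau(p)^2$. For the lower bound, whenever $X$ contains finitely many minimal projections $p_1,\dots,p_k$ of $M$, each $p_i$ is a minimal projection of $N_X$, so $S(X)\ge\sum_{i=1}^k\tau(p_i)^2$; since such $X$ are cofinal among those containing $\{p_1,\dots,p_k\}$, letting $k\to\infty$ gives $\inf_X S(X)\ge\sum_{p\in P}\tau(p)^2$. For the upper bound, fix $\epsilon>0$, choose $p_1,\dots,p_k$ with $\tau\big(\sum_{i>k}p_i\big)<\epsilon$, set $z=\sum_{p\in P}p$, and use that the diffuse abelian algebra $M(1-z)$ contains a self-adjoint element $a$ of support $1-z$ with non-atomic spectral distribution. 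Taking $X=\{p_1,\dots,p_k,a\}$, the minimal projections of $N_X$ are exactly $p_1,\dots,p_k$ together with $r:=\sum_{i>k}p_i$ (there are none beneath $1-z$, as $a$ is diffuse there), whence $S(X)=\sum_{i=1}^k\tau(p_i)^2+\tau(r)^2\le\sum_{p\in P}\tau(p)^2+\epsilon^2$. Letting $\epsilon\to0$ yields $\inf_X S(X)\le\sum_{p\in P}\tau(p)^2$, completing the computation. The final assertion about $L^\infty(X,\mu)$ is then the special case in which the minimal projections are the indicators of the atoms of $\mu$.

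The main obstacle is the monotonicity lemma, and specifically the claim that enlarging the finite generating set can only refine the existing atoms and never create new ones within the diffuse part of $N_X$; this is exactly what the normal-character argument rules out. A secondary technical point, forced by the fact that $M$ need not be separable, is the existence of the single diffuse element $a$ used for the upper bound, which I would obtain from the standard fact that a diffuse abelian von Neumann algebra contains a unital copy of $L^\infty[0,1]$.
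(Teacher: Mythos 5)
Your proposal is correct, but it takes a genuinely different route from the paper. The paper splits into two cases: for separable $M$ it fixes one finitely generated weakly dense unital $*$-subalgebra $A$ (implicitly using that separable abelian von Neumann algebras admit such subalgebras), applies Theorem~\ref{thm:fSd_for_abelian_vNas} to every $A\ang{X}$, and invokes Remark~\ref{rem:starting_upper_lower_fSd_later}; for non-separable $M$ it decomposes $M=M_a\oplus M_d$ into a separable atomic part and a diffuse part, and combines the separable case with the direct-sum formula of Proposition~\ref{prop:upper_lower_fSd_properties}(3) and the diffuse-center result $\sigma(M_d,\tau_d)=1$ from Theorem~\ref{thm:fSd_is_one_if}(1). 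You instead apply Theorem~\ref{thm:fSd_for_abelian_vNas} \emph{locally}, to $\C\ang{X}$ sitting densely inside its own (automatically separable) weak closure $N_X=\C\ang{X}''$, and then control the net $S(X)$ directly: your monotonicity lemma --- that minimal projections of $N_Y$ must refine those of $N_X$ because a minimal projection under the diffuse part of $N_X$ would produce a normal character on a diffuse abelian algebra --- is a genuinely new ingredient not present in the paper, and the normal-character argument is sound (the character $a\mapsto\tau(aq)/\tau(q)$ is unital, multiplicative, and normal, hence would yield a minimal projection in the diffuse part). Your approach buys a uniform treatment of the separable and non-separable cases, avoids both the direct-sum machinery and the single-generation fact, and actually proves something slightly stronger: the net $\sigma(\C\ang{X},\tau)$ is monotone non-decreasing and hence genuinely convergent, not merely possessed of equal limits superior and inferior. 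The paper's route is shorter because it recycles machinery already built in Section 6. Two small points to tidy: in your upper-bound construction you should note the degenerate case $M(1-z)=0$ (purely atomic $M$), where one simply takes $X=\set{p_1,\ldots,p_k}$ and the complementary projection $r$ is automatically a minimal projection of $N_X$; and the existence of a self-adjoint element of full support $1-z$ with non-atomic distribution in a possibly non-separable diffuse abelian algebra does follow, as you say, from a unital embedding of $L^\infty[0,1]$ (take the image of $t\mapsto 1+t$ to guarantee full support), a fact the paper itself also needs implicitly when applying Theorem~\ref{thm:fSd_is_one_if}(1) to $M_d$.
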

\begin{proof}
We will first treat the case that $M$ is separable.
Let $A \subseteq M$ be a $*$-subalgebra which is finitely generated, weakly dense, and unital.
By Theorem~\ref{thm:fSd_for_abelian_vNas} we have for any finite self-adjoint tuple $X \subset M$ that
\[\sigma(A\ang{X}, \tau) = 1-\sum_{p}\tau(p)^2;\]
then by Remark~\ref{rem:starting_upper_lower_fSd_later} we conclude
\[\overline{\sigma}(M, \tau) = \limsup_{X = X^*\subset M} \sigma(A\ang{X}, \tau) = 1-\sum_{p}\tau(p)^2 = \liminf_{X = X^*\subset M} \sigma(A\ang{X}, \tau) = \underline{\sigma}(M, \tau).\]

If $M$ is not separable, we note that it nonetheless contains only countably many minimal projections.
We may then realize $M$ as the direct sum $M_a \oplus M_d$ where $M_a$ is separable and contains all the minimal projections, while $M_d$ is diffuse; let $\alpha$ be so that $\tau = \alpha\tau|_{M_a} \oplus (1-\alpha)\tau|_{M_d}$ in this decomposition.
Then the upper (respectively, lower) free Stein dimension of $M$ is bounded above (respectively, below) by
\[
\alpha^2\sigma(M_a, \tau_a) + (1-\alpha)^2\sigma(M_d, \tau_d) + 2\alpha(1-\alpha)
= \alpha^2\paren{1 - \sum_{p} \tau_a(p)^2} + (1-\alpha)^2 1 + 2\alpha(1-\alpha)
= 1 - \sum_p \tau(p)^2,
\]
by Proposition~\ref{prop:upper_lower_fSd_properties}.
(Here we have used the fact that $\sigma(M_a, \tau_a)$ exists with the correct value by the first part of this argument, while $\sigma(M_d, \tau_d) = 1$ by Theorem~\ref{thm:fSd_is_one_if}.)
The claim follows.
\end{proof}

We also have the following twist on Theorem~\ref{thm:fSd_is_one_if}.(3):

\begin{prop}
Suppose that $(M, \tau)$ is a $\II_1$ factor whose fundamental group contains an element of $\Q\setminus\set1$.
If $\underline{\sigma}(M, \tau) > 1$ then $\sigma(M, \tau) = \infty$.
\end{prop}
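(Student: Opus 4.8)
The plan is to run the argument from the proof of Theorem~\ref{thm:fSd_is_one_if}.(3) essentially verbatim, but phrased for the \emph{lower} free Stein dimension and exploiting the hypothesis $\underline{\sigma}(M,\tau)>1$ to force divergence rather than collapse to $1$. Let $q\in\Q\setminus\{1\}$ lie in the fundamental group of $M$; without loss of generality $0<q<1$, so $q=\frac{d}{n}$ with $n\in\N$ and $d\in\{1,\dots,n-1\}$. Fix a projection $p\in M$ with $\tau(p)=\frac1n$ and set $(N,\tau_N):=(pMp,\tau^{(p)})$. The standard amplification identification gives a trace-preserving isomorphism $(M,\tau)\cong(N\otimes M_n(\C),\tau_N\otimes\tr_n)$, while the fact that $q=\frac dn$ lies in the fundamental group yields $N^d=(M^{1/n})^d=M^{d/n}\cong M$, that is, a second trace-preserving isomorphism $(M,\tau)\cong(N\otimes M_d(\C),\tau_N\otimes\tr_d)$.

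Since $M_n(\C)$ and $M_d(\C)$ are factors, Proposition~\ref{prop:upper_lower_fSd_properties}.(4) applies with equality. Writing $s:=\underline{\sigma}(M,\tau)$ and $t:=\underline{\sigma}(N,\tau_N)$, and recalling $\sigma(M_k(\C),\tr_k)=1-\frac1{k^2}$, the first isomorphism gives
\[
s=\underline{\sigma}(N\otimes M_n(\C))=\frac{t}{n^2}+1-\frac1{n^2},
\]
hence $t=1+n^2(s-1)$; the second gives $s=\frac{t}{d^2}+1-\frac1{d^2}$, hence $t=1+d^2(s-1)$. These are exactly the two relations appearing in the proof of Theorem~\ref{thm:fSd_is_one_if}.(3), now stated for $\underline{\sigma}$.

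To conclude, suppose toward a contradiction that $s=\underline{\sigma}(M,\tau)$ is finite. Then $t$ is finite as well, and equating the two expressions yields $(n^2-d^2)(s-1)=0$. As $n>d\geq1$ forces $n^2\neq d^2$, we obtain $s=1$, contradicting the hypothesis $\underline{\sigma}(M,\tau)>1$. Therefore $\underline{\sigma}(M,\tau)=\infty$, and since $\overline{\sigma}(M,\tau)\geq\underline{\sigma}(M,\tau)$ always holds, also $\overline{\sigma}(M,\tau)=\infty$; thus $\sigma(M,\tau)=\infty$. The only genuine subtlety is bookkeeping: one must verify that both amplification identifications are honestly trace-preserving, so that the invariance of $\underline{\sigma}$ under trace-preserving $*$-isomorphisms applies, and that the equality clause of Proposition~\ref{prop:upper_lower_fSd_properties}.(4) is in force for each matrix factor. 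The arithmetic with possibly-infinite values is then handled cleanly by the contradiction argument above, since finiteness of $s$ propagates to finiteness of $t$ through either relation.
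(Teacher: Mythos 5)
Your proof is correct, and it is essentially the argument the paper intends: the proposition is stated without proof as a ``twist'' on Theorem~\ref{thm:fSd_is_one_if}.(3), and you reproduce that theorem's double-amplification computation (via Proposition~\ref{prop:upper_lower_fSd_properties}.(4) with both $M_n(\C)$ and $M_d(\C)$) verbatim, only phrased for $\underline{\sigma}$ and run to a contradiction. You are also right to note the one point that requires care---applying the argument to $\underline{\sigma}$ directly rather than merely citing Theorem~\ref{thm:fSd_is_one_if}.(3) is necessary, since the latter would only force $\overline{\sigma}(M,\tau)=\infty$, whereas the conclusion $\sigma(M,\tau)=\infty$ requires $\underline{\sigma}(M,\tau)=\infty$ as well.
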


Let $(M,\tau)$ be a tracial von Neumann algebra with unital $*$-subalgebras $B\subset A\subset M$ such that $A$ is finitely generated over $B$. We conclude the paper by showing that the free Stein dimension over $B$ of a certain Dirichlet algebra containing $A$ (see \cite[Section 5]{DL92}) exists and equals $\sigma(B\subset A,\tau)$ (see Theorem~\ref{thm:fSd_of_Dirichlet_extension}). The idea is to define this Dirichlet algebra in a way that is ``uniform'' with respect to all $\delta\in \Der_{1\otimes 1}(B\subset A,\tau)$. To begin, we make a few observations regarding $\Der_{1\otimes 1}(B\subset A,\tau)$.

For any pair $\delta_1,\delta_2\in \Der_{1\otimes 1}(B\subset A,\tau)$, the map
    \begin{align*}
        \delta_1\oplus \delta_2\colon A &\to L^2(A,\tau)^2\\
            x&\mapsto (\delta_1(x), \delta_2(x))
    \end{align*}
is a closable derivation with $\dom(\overline{\delta_1\oplus \delta_2})\subset \dom(\overline{\delta_1})\cap \dom(\overline{\delta_2})$. In particular, $\text{Re}(\delta)\oplus \text{Im}(\delta)$ is a closable derivation for any $\delta\in \Der_{1\otimes 1}(A,\tau)$ with
    \[
        \dom( \overline{\delta\oplus \delta^\dagger}) = \dom(\overline{\text{Re}(\delta)\oplus \text{Im}(\delta)}).
    \]
For any $\xi$ in this common domain one has
    \begin{align}\label{eqn:closures_of_real_and_imaginary_derivations}
        \overline{\delta}(\xi)&= \overline{\text{Re}(\delta)}(\xi) + i \overline{\text{Im}(\delta)}(\xi) & \overline{\delta^\dagger}(\xi) &= \overline{\text{Re}(\delta)}(\xi) - i \overline{\text{Im}(\delta)}(\xi) \notag\\
        \overline{\text{Re}(\delta)}(\xi) &= \frac{1}{2}(\overline{\delta}(\xi) + \overline{\delta^\dagger}(\xi)) & \overline{\text{Im}(\delta)}(\xi) &= \frac{1}{2i}(\overline{\delta}(\xi) - \overline{\delta^\dagger}(\xi))
    \end{align}
Moreover, $\text{Re}(\delta)\oplus \text{Im}(\delta)$ is real:
    \[
        \<x\cdot (\text{Re}(\delta)\oplus \text{Im}(\delta))(y), (\text{Re}(\delta)\oplus \text{Im}(\delta))(z) \> = \<(\text{Re}(\delta)\oplus \text{Im}(\delta))(z^*), (\text{Re}(\delta)\oplus \text{Im}(\delta))(y^*)\cdot x^*\>
    \]
for all $x,y,z\in A$.

\begin{prop}\label{prop:Dirichlet_extension_of_A}
Let $(M,\tau)$ be a tracial von Neumann algebra with unital $*$-subalgebras $B\subset A\subset M$ such that $A$ is finitely generated over $B$. Consider
    \[
        \mathcal{D}:=\left\{y\in \bigcap_{\delta\in \Der_{1\otimes 1}(B\subset A,\tau)} A''\cap \dom(\overline{\delta \oplus \delta^\dagger}) \colon \sup_{\substack{\delta\in \Der_{1\otimes 1}(B\subset A,\tau)\\ \delta\neq 0}} \frac{\|\overline{\delta}(y)\|}{\|\delta\|_X} <\infty \right\},
    \]
where $X=X^*\subset A$ satisfies $B\<X\>=A$.

    \begin{enumerate}[label=(\arabic*)]
        \item $\mathcal{D}$ is a unital $*$-algebra containing $A$ and is independent of $X$.
        
        \item For any self-adjoint $y\in \mathcal{D}$ and Lipschitz function $f\colon \R\to \R$, one has $f(y)\in \mathcal{D}$.
        
        \item For any $y\in \mathcal{D}$, one has $|y|\in \mathcal{D}$.
        
        \item For any $c>0$,
            \[
                \left\{y\in \mathcal{D}\colon \sup_{\substack{\delta\in \Der_{1\otimes 1}(B\subset A,\tau)\\ \delta\neq 0}} \frac{\|\overline{\delta}(y)\|}{\|\delta\|_X}\leq c\right\}.
            \]
        is convex and closed in the weak operator topology.

    \end{enumerate}
\end{prop}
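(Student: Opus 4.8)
The plan is to handle the four parts in order, with the Lipschitz estimate of (2) carrying most of the analytic weight and the remaining parts reducing to it together with soft functional-analytic arguments. Throughout I will work with the closable derivation $\delta\oplus\delta^\dagger$ appearing in the definition of $\mathcal D$ and will repeatedly use the equivalence of the norms $\|\cdot\|_X$ for different generating sets established in the preliminaries.

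For (1), independence of $X$ is immediate: since $\|\cdot\|_X$ and $\|\cdot\|_Y$ are equivalent on $\Der(B\subset A,\tau)$, finiteness of $\sup_{\delta\neq0}\|\overline\delta(y)\|/\|\delta\|_X$ is unaffected by the choice of generating set. That $A\subseteq\mathcal D$ follows from the algebraic identity $\delta(p(X))=\sum_{x\in X}\ev_X(\partial_{x:B}p)(X)\#\delta(x)$, which gives $\|\delta(p(X))\|\leq C_p\|\delta\|_X$ with $C_p$ depending only on $p$. Linearity of $\dom(\overline{\delta\oplus\delta^\dagger})$ and subadditivity of the defining seminorm make $\mathcal D$ a subspace. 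For the $*$-operation I would use that $\delta\mapsto\delta^\dagger$ is an isometric bijection of $(\Der_{1\otimes1}(B\subset A,\tau),\|\cdot\|_X)$ (because $X=X^*$ and $J_\HS$ is isometric) together with $\overline\delta(y^*)=J_\HS\overline{\delta^\dagger}(y)$, so that the seminorm of $y^*$ equals that of $y$ after the reindexing $\delta\rightsquigarrow\delta^\dagger$; here $y^*\in\dom(\overline{\delta\oplus\delta^\dagger})$ because this domain is $*$-invariant. For products I would prove a Leibniz rule for the closure: if $y,z\in A''\cap\dom(\overline{\delta\oplus\delta^\dagger})$ then $yz$ lies in the same domain with $\overline\delta(yz)=\overline\delta(y)z+y\overline\delta(z)$, where $y,z$ act on $L^2(\aaop,\ttop)$ by the left/right multiplications $y\otimes1$ and $1\otimes z$ of norms $\|y\|,\|z\|$. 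This follows by choosing $a_n,b_n\in A$ with $a_n\to y$, $b_n\to z$ in $L^2$ and $\delta(a_n)\to\overline\delta(y)$, $\delta(b_n)\to\overline\delta(z)$, truncated by a norm-one Lipschitz function so that $\|a_n\|\leq\|y\|$ and $\|b_n\|\leq\|z\|$; then $a_nb_n\to yz$ in $L^2$, the two Leibniz terms converge since uniformly bounded multiplications converging strongly act continuously on $L^2$-convergent sequences, and closedness of $\overline{\delta\oplus\delta^\dagger}$ finishes it. The estimate $\|\overline\delta(yz)\|\leq\|z\|\,\|\overline\delta(y)\|+\|y\|\,\|\overline\delta(z)\|$ then yields a finite seminorm for $yz$.

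For (2), writing $\Delta=\delta\oplus\delta^\dagger$ and $f^{[1]}(s,t)=\tfrac{f(s)-f(t)}{s-t}$, the goal is the identity $\overline\Delta(f(y))=f^{[1]}(y\otimes1,\,1\otimes y)\#\overline\Delta(y)$, where the operator $f^{[1]}(y\otimes1,1\otimes y)$ is defined through the joint spectral calculus of the commuting self-adjoint multiplications $y\otimes1$ and $1\otimes y$. Since $|f^{[1]}|$ is bounded by the Lipschitz constant of $f$ on $\sigma(y)^2$, this operator has norm at most $\mathrm{Lip}(f)$, giving $\|\overline\Delta(f(y))\|\leq\mathrm{Lip}(f)\,\|\overline\Delta(y)\|$. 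The identity is first the algebraic divided-difference Leibniz rule for polynomials $f$ and $y\in A$; one then passes to general Lipschitz $f$ and to $y\in A''$ exactly as in the proof of Lemma~\ref{lem:minimal_central_projections_are_orthogonal_to_derivations}, approximating $f$ by polynomials and $y$ by norm-truncated elements of $A$ and invoking closability. Combined with $\|\overline\Delta(y)\|\leq\sqrt2\,C_y\|\delta\|_X$ (from $y,y^*\in\mathcal D$) this gives $f(y)\in\mathcal D$. For (3), the function $|t|$ is genuinely Lipschitz, so the self-adjoint case is immediate from (2); for general $y$ I would pass to $M_2$ and apply (1)--(2) to the self-adjoint dilation $Y=\left(\begin{smallmatrix}0&y\\ y^*&0\end{smallmatrix}\right)$ with $f(t)=|t|$, using the amplification correspondence of Theorem~\ref{thm:amplification_formula} to identify the derivation module for $M_2(\C)\subset M_2(A)$; then $|Y|=\left(\begin{smallmatrix}|y^*|&0\\ 0&|y|\end{smallmatrix}\right)\in\mathcal D$, and reading off the $(2,2)$-corner gives $|y|\in\mathcal D$.

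For (4), convexity is automatic because $y\mapsto\sup_{\delta\neq0}\|\overline\delta(y)\|/\|\delta\|_X$ is a seminorm on $\mathcal D$, so its sublevel sets are convex. For weak-operator closedness, suppose $y_i\to y$ in the weak operator topology with each $y_i$ in the sublevel set. Testing the convergence against the cyclic vector $\hat1$ shows $\widehat{y_i}\to\hat y$ weakly in $L^2(A,\tau)$, and $y\in A''$ as $A''$ is weak-operator closed. Fix $\delta$ and set $\Delta=\delta\oplus\delta^\dagger$; the bound $\|\overline\Delta(y_i)\|\leq c\sqrt2\,\|\delta\|_X$ makes $\{\overline\Delta(y_i)\}$ bounded in $L^2(\aaop,\ttop)^2$, so a subnet converges weakly to some $\zeta$ with $\|\zeta\|\leq\liminf\|\overline\Delta(y_i)\|$. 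Along this subnet $(\widehat{y_i},\overline\Delta(y_i))\to(\hat y,\zeta)$ weakly inside the graph of $\overline\Delta$, which, being a norm-closed subspace of a Hilbert space, is weakly closed; hence $y\in\dom(\overline\Delta)$, $\overline\Delta(y)=\zeta$, and the first-component bound gives $\|\overline\delta(y)\|\leq c\|\delta\|_X$. Since $\delta$ was arbitrary, $y$ lies in the sublevel set. The main obstacle is the Lipschitz estimate of (2): moving from the purely algebraic divided-difference identity (valid only for polynomials and $y\in A$) to arbitrary Lipschitz $f$ and $y\in A''$ demands careful handling of the joint functional calculus of $y\otimes1,1\otimes y$ and a closability/truncation argument—and the fact that $\sqrt{\cdot}$ fails to be Lipschitz at $0$ is exactly what forces the $2\times2$ dilation in (3).
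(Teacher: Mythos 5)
Your high-level architecture matches the paper's: the uniform-in-$\delta$ estimates and the $\delta\mapsto\delta^\dagger$ reindexing in (1), the $2\times 2$ self-adjoint dilation for (3), and the bounded-net/weakly-closed-graph argument for (4) are all exactly the paper's steps, and those portions are sound (your direct handling of WOT limits in (4) even bypasses the paper's implicit reduction from WOT to SOT closedness via convexity). The genuine gap is in the analytic core. The paper does \emph{not} prove that $A''\cap\dom(\overline{\delta\oplus\delta^\dagger})$ is an algebra on which the closure satisfies the Leibniz rule, nor the Lipschitz functional calculus: it observes that $\mathrm{Re}(\delta)\oplus\mathrm{Im}(\delta)$ is a \emph{real} closable derivation with the same closure domain as $\delta\oplus\delta^\dagger$, precisely so that these three structural facts can be quoted from the theory of closable real derivations and Dirichlet forms (Davies--Lindsay \cite{DL92}, Sauvageot \cite{Sau89}, Cipriani \cite{Cip08}). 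Your proposal instead tries to establish these facts from scratch, and the bootstrap is circular. To prove the Leibniz rule for $\overline{\delta}$ at $y,z\in A''\cap\dom(\overline{\delta\oplus\delta^\dagger})$ you need graph-norm approximants $a_n,b_n\in A$ that are \emph{uniformly bounded in operator norm}: without such bounds neither $a_nb_n\to yz$ in $L^2$ nor the convergence of the two Leibniz terms holds, since $L^2$-convergence alone does not control products. Your device for producing them---``truncation by a norm-one Lipschitz function''---requires knowing that a Lipschitz function applied to $a_n$ stays in $\dom(\overline{\delta\oplus\delta^\dagger})$ with controlled derivative and preserves graph-norm convergence; but $g(a_n)$ no longer lies in $A$, so this is not the polynomial case: it is exactly part (2). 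Meanwhile, your proof of (2) requires the divided-difference identity for polynomials evaluated at elements of $A''\cap\dom(\overline{\delta\oplus\delta^\dagger})$ (not merely of $A$), which requires the Leibniz rule for the closure, and it again invokes ``norm-truncated elements of $A$.'' So your (1)-products and (2) each presuppose the other; the appeal to the proof of Lemma~\ref{lem:minimal_central_projections_are_orthogonal_to_derivations} does not help, since that argument only ever evaluates the derivation at elements of $A$ itself and never enlarges the domain.

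A secondary overreach: for merely Lipschitz $f$, the asserted identity $\overline{\Delta}(f(y))=f^{[1]}(y\otimes 1,1\otimes y)\#\overline{\Delta}(y)$ is stronger than what is true, since $f^{[1]}$ need not be defined on the diagonal at spectral atoms of $y$ where $f$ fails to be differentiable; only the inequality $\|\overline{\Delta}(f(y))\|_\ttop\leq \mathrm{Lip}(f)\,\|\overline{\Delta}(y)\|_\ttop$ survives, and that inequality is precisely what the paper imports from \cite[Equation 4.7]{Cip08}. The repair is to do what the paper does: verify that $\mathrm{Re}(\delta)\oplus\mathrm{Im}(\delta)$ is a real closable derivation whose closure has the same domain as $\overline{\delta\oplus\delta^\dagger}$, cite \cite{DL92}, \cite{Sau89}, \cite{Cip08} for the algebra property, the Lipschitz calculus, and the absolute value, and then run your uniform-estimate bookkeeping (which is correct) on top of those facts.
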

\begin{proof}
By the discussion preceding the proposition, $\text{Re}(\delta)\oplus \text{Im}(\delta)$ is a real closable derivation for all $\delta\in \Der_{1\otimes 1}(B\subset A,\tau)$. Therefore by \cite{DL92} and \cite{Sau89} (see also \cite[Chapter 4]{Cip08}) the set
    \[
        \mathcal{D}_\delta:=M\cap \dom(\overline{\text{Re}(\delta)\oplus \text{Im}(\delta)}) = M\cap \dom(\overline{\delta\oplus \delta^\dagger})
    \]
has the following properties: (i) $\mathcal{D}_\delta$ a $*$-algebra on which $\overline{\text{Re}(\delta)\oplus \text{Im}(\delta)}$ satisfies the Leibniz rule; (ii) $f(y)\in D_\delta$ for any self-adjoint $y\in \mathcal{D}_\delta$ and Lipschitz function $f\colon \R\to \R$ with $f(0)=0$; and (iii) $|y|\in \mathcal{D}_\delta$ for any $y\in \mathcal{D}_\delta$. These properties will be relevant for the proofs of (1), (2), and (3).\\

\noindent\textbf{(1):} $\mathcal{D}$ is independent of $X$ since the $\|\cdot\|_X$-norm is equivalent to $\|\cdot\|_Y$ for any other finite subset $Y=Y^*\subset A$ satisfying $B\<Y\>=A$. Also for any $p\in B\<T_X\>$ and $\delta\in \Der_{1\otimes 1}(B\subset A,\tau)$, one has
    \[
        \| \delta( p(X))\| \leq \sum_{x\in X} \| \ev_X(\partial_{x\colon B} p) \# \delta(x) \| \leq \max_{x\in X} \|\ev_X(\partial_{x\colon B} p)\| \|\delta\|_X.
    \]
This shows $A\subset \mathcal{D}$.

Now, let $\delta\in \Der_{1\otimes 1}(B\subset A,\tau)$, $y_1,y_2\in \mathcal{D}\subset \mathcal{D}_\delta$, and $\alpha\in \C$. The remarks at the beginning of the proof as well as (\ref{eqn:closures_of_real_and_imaginary_derivations}) imply
    \begin{align*}
        \| \overline{\delta}(y_1+\alpha y_2^*)\| = \| \overline{\delta}(y_1) + \alpha \overline{\delta^\dagger}(y_2)\|  &\leq (c_1+|\alpha|c_2)\|\delta\|_X\\
        \| \overline{\delta}(y_1y_2)\| = \| \overline{\delta}(y_1)\cdot y_2 + y_1\cdot \overline{\delta}(y_2)\| &\leq (c_1\|y_2\| + c_2 \|y_1\|)\|\delta\|_X,
   \end{align*}
where $c_1,c_2>0$ are independent of $\delta$. Hence $\mathcal{D}$ is a $*$-algebra.\\

\noindent\textbf{(2):} Since $f(y)-f(1)=f_0(y)$ for $f_0:= f - f(1)$ and $A\subset \mathcal{D}$ is unital, it suffices to assume $f(0)=0$. In this case, one has $f(y)\in \mathcal{D}_\delta$ for all $\delta\in \Der_{1\otimes 1}(B\subset A, \tau)$ by the remarks at the beginning of the proof. Moreover, by \cite[Equation 4.7]{Cip08} we see that
    \[
       \left(\|\overline{\text{Re}(\delta)}(f(y))\|_\ttop^2 + \|\overline{\text{Im}(\delta)}(f(y))\|_\ttop^2 \right)^{1/2} \leq \|f\|_{\text{Lip}} \left(\|\overline{\text{Re}(\delta)}(y)\|_\ttop^2 + \|\overline{\text{Im}(\delta)}(y)\|_\ttop^2 \right)^{1/2}.
    \]
Using (\ref{eqn:closures_of_real_and_imaginary_derivations}), one then has
    \begin{align*}
        \|\overline{\delta}(f(y))\|_{\ttop} &= \frac{1}{\sqrt{2}} \left(\|\overline{\delta}(f(y))\|_\ttop^2 + \|\overline{\delta^\dagger}(f(y))\|_\ttop^2 \right)^{1/2}\\
        &\leq \left(\|\overline{\text{Re}(\delta)}(f(y))\|_\ttop^2 + \|\overline{\text{Im}(\delta)}(f(y))\|_\ttop^2 \right)^{1/2}\\
        &\leq  \|f\|_{\text{Lip}} \left(\|\overline{\text{Re}(\delta)}(y)\|_\ttop^2 + \|\overline{\text{Im}(\delta)}(y)\|_\ttop^2 \right)^{1/2}\\
        &\leq \frac{1}{\sqrt{2}} \|f\|_{\text{Lip}} \left(\|\overline{\delta}(y)\|_\ttop^2 + \|\overline{\delta^\dagger}(y)\|_\ttop^2 \right)^{1/2} \leq  \|f\|_{\text{Lip}} c \|\delta\|_X,
    \end{align*}
where $c>0$ is independent of $\delta$. Thus $f(y)\in \mathcal{D}$.\\

\noindent\textbf{(3):} Consider the self-adjoint matrix
    \[
        \left( \begin{array}{cc} 0 & y^* \\ y & 0 \end{array} \right)
    \]
with absolute value
    \[
         \left( \begin{array}{cc} |y| & 0 \\ 0 & |y^*| \end{array} \right).
    \]
By the proof of the previous part, for $\delta\in \Der_{1\otimes 1}(B\subset A,\tau)$ we have
    \begin{align*}
        \left\| \left( \begin{array}{cc} \overline{\delta}(|y|) & 0 \\ 0 & \overline{\delta}(|y^*|) \end{array} \right) \right\| &\leq  \frac{1}{\sqrt{2}}\left( \left\| \left(\begin{array}{cc} 0 & \overline{\delta}(y^*) \\ \overline{\delta}(y) & 0 \end{array}\right) \right\|^2 + \left\| \left(\begin{array}{cc} 0 & \overline{\delta^\dagger}(y^*) \\ \overline{\delta^\dagger}(y) & 0 \end{array}\right) \right\|^2 \right)^{1/2}\\
        &=  \left( \|\overline{\delta}(y)\|_\ttop^2 +  \|\overline{\delta^\dagger}(y)\|_\ttop^2 \right)^{1/2} \leq \sqrt{2} c \|\delta\|_X,
    \end{align*}
where $c>0$ is independent of $\delta$. Thus $|y|\in \mathcal{D}$.\\

\noindent\textbf{(4):} The convexity of these sets is clear, and so it suffices to show they are closed in the strong operator topology. Let $(y_i)_{i\in I}\subset \mathcal{D}$ be a net converging strongly to some $y\in M$ and satisfying $\|\overline{\delta}(y_i)\|_\ttop \leq c\|\delta\|_X$ for all $\delta\in \Der_{1\otimes 1}(B\subset A, \tau)$. Consequently, $\| y - y_i\|_\tau\to 0$ and
    \[
        \limsup_{i\to \infty} \left(\| \overline{\delta}(y_i)\|_\ttop^2 + \| \overline{\delta^\dagger}(y_i)\|_\ttop^2\right)^{1/2} \leq c \left( \|\delta\|_X^2 + \|\delta^\dagger\|_X^2 \right)^{1/2}.
    \]
This shows $y\in \dom(\overline{\delta\oplus \delta^\dagger})$. Moreover
    \[
        \| \overline{\delta}(y)\|_\ttop \leq \limsup_{i\in \infty} \| \overline{\delta}(y_i)\|_\ttop \leq c \| \delta\|_X.\qedhere
    \]
\end{proof}

\begin{thm}\label{thm:fSd_of_Dirichlet_extension}
Let $(M,\tau)$ be a tracial von Neumann algebra with unital $*$-subalgebras $B\subset A\subset M$ such that $A$ is finitely generated over $B$. For $\mathcal{D}$ as in Proposition~\ref{prop:Dirichlet_extension_of_A}, $\sigma(B\subset \mathcal{D},\tau)=\sigma(B\subset A,\tau)$.
\end{thm}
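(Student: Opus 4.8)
The plan is to reduce the statement to the effect of adjoining \emph{finitely many} elements of $\mathcal{D}$ to $A$, and then to exhibit a bounded, module-linear isomorphism of derivation spaces implemented by restriction. By Remark~\ref{rem:starting_upper_lower_fSd_later} applied with $A_0=A$ (which is finitely generated over $B$), both $\overline{\sigma}(B\subset\mathcal{D},\tau)$ and $\underline{\sigma}(B\subset\mathcal{D},\tau)$ are computed as the $\limsup$, respectively $\liminf$, of $\sigma(B\subset A\<Y\>,\tau)$ over finite self-adjoint $Y\subset\mathcal{D}$. Thus it suffices to prove $\sigma(B\subset A\<Y\>,\tau)=\sigma(B\subset A,\tau)$ for every such $Y$; the net is then constant and the two one-sided dimensions coincide. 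Note at the outset that $Y\subset\mathcal{D}\subset A''$ gives $A\<Y\>''=A''$, so both free Stein dimensions are von Neumann dimensions of right Hilbert modules over the \emph{same} algebra $(A\otimes A^\circ)''$.

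First I would analyze the restriction map $r\colon\Der_{1\otimes1}(B\subset A\<Y\>,\tau)\to\Der_{1\otimes1}(B\subset A,\tau)$, $\Delta\mapsto\Delta|_A$. Fixing generators $X$ of $A$ over $B$ (so that $X\cup Y$ generates $A\<Y\>$ over $B$) and the associated inner products, $r$ is manifestly right $(A\otimes A^\circ)''$-linear and norm-decreasing, since $\|r(\Delta)\|_X^2=\sum_{x\in X}\|\Delta(x)\|^2\le\|\Delta\|_{X\cup Y}^2$. The key technical identity is that for $\Delta\in\Der_{1\otimes1}(B\subset A\<Y\>,\tau)$ and $y\in Y$ one has $\Delta(y)=\overline{\Delta|_A}(y)$: indeed $\Delta|_A\in\Der_{1\otimes1}(B\subset A,\tau)$, as the condition $1\otimes1\in\dom((\Delta|_A)^*)$ descends from the one for $\Delta$, so $y\in\mathcal{D}$ forces $y\in\dom(\overline{\Delta|_A})$; approximating $y$ by some $p_n\in A$ in the graph norm of $\overline{\Delta|_A}$ and invoking closability of $\Delta$ identifies the two values. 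This identity yields injectivity of $r$ immediately, for if $\Delta|_A=0$ then $\Delta$ vanishes on $X\cup Y\cup B$, hence everywhere.

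The same identity together with the uniform bound built into $\mathcal{D}$ furnishes a bounded inverse. Writing $c_y=\sup_{\delta\neq0}\|\overline{\delta}(y)\|/\|\delta\|_X<\infty$, we obtain $\|\Delta(y)\|=\|\overline{\Delta|_A}(y)\|\le c_y\|\Delta|_A\|_X$, whence $\|\Delta\|_{X\cup Y}^2\le(1+\sum_{y\in Y}c_y^2)\|r(\Delta)\|_X^2$. For surjectivity I would, given $\delta_0\in\Der_{1\otimes1}(B\subset A,\tau)$, set $\Delta:=\overline{\delta_0}|_{A\<Y\>}$: this is defined because $A\<Y\>\subset\mathcal{D}\subset\dom(\overline{\delta_0})$, it is a genuine derivation because $\overline{\delta_0}$ obeys the Leibniz rule on the Dirichlet domain $\mathcal{D}_{\delta_0}$ (property (i) in the proof of Proposition~\ref{prop:Dirichlet_extension_of_A}, transported to $\overline{\delta_0}$ via~(\ref{eqn:closures_of_real_and_imaginary_derivations})), and it lies in $\Der_{1\otimes1}$ because $\<1\otimes1,\overline{\delta_0}(w)\>_{\ttop}=\<\delta_0^*(1\otimes1),w\>_\tau$ holds for every $w\in\dom(\overline{\delta_0})$ by the same graph-norm approximation. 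Since $r(\Delta)=\delta_0$, the map $r$ is a bijection with equivalent norms on its domain and range, and so extends to a right $(A\otimes A^\circ)''$-module isomorphism of the closures; the von Neumann dimensions therefore agree, giving $\sigma(B\subset A\<Y\>,\tau)=\sigma(B\subset A,\tau)$.

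The main obstacle is the bounded-inverse estimate, and this is exactly where the defining uniform supremum of $\mathcal{D}$ is indispensable: without a single constant $c_y$ controlling $\|\overline{\delta}(y)\|$ by $\|\delta\|_X$ simultaneously for all $\delta$, the restriction map could fail to be bounded below and the dimensions of the closures could jump. The surjectivity step is the other essential ingredient, as it relies on $\mathcal{D}$ lying in the domain of every closed derivation and on the Leibniz property of closed real derivations supplied by the Dirichlet-form theory invoked in Proposition~\ref{prop:Dirichlet_extension_of_A}.
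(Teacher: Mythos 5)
Your proposal is correct and follows essentially the same route as the paper: reduce via Remark~\ref{rem:starting_upper_lower_fSd_later} to showing $\sigma(B\subset A\<Y\>,\tau)=\sigma(B\subset A,\tau)$ for finite $Y=Y^*\subset\mathcal{D}$, then show the restriction map $\Delta\mapsto\Delta|_A$ is a right-module bijection that is bounded with bounded inverse, the inverse bound coming from the uniform supremum defining $\mathcal{D}$ and surjectivity from $A\<Y\>\subset\mathcal{D}\subset\dom(\overline{\delta})$. The details you verify explicitly (well-definedness of the restriction, the Leibniz rule for $\overline{\delta_0}$ on $\mathcal{D}$, and $1\otimes1\in\dom((\overline{\delta_0}|_{A\<Y\>})^*)$) are exactly the points the paper leaves implicit, so this is a faithful, slightly more detailed rendition of the same argument.
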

\begin{proof}
By Remark~\ref{rem:starting_upper_lower_fSd_later} it suffices to show
    \[
        \sigma(B\subset A\<Y\>, \tau)= \sigma(B\subset A,\tau)
    \]
for any finite subset $Y=Y^*\subset \mathcal{D}$. Fix such a subset and consider the right $\aaop$-linear map
    \begin{align*}
        \Der_{1\otimes 1}(B\subset A\<Y\>, \tau)&\to \Der_{1\otimes 1}(B\subset A,\tau)\\
            \delta &\mapsto \delta|_A.
    \end{align*}
If $\delta|_A\equiv 0$, then $Y\subset A'' \subset \ker(\overline{\delta})$ by closability and hence $\delta\equiv 0$. Therefore the map is injective. It is surjective because for any $\delta\in \Der_{1\otimes 1}(B\subset A, \tau)$ we have $Y\subset A''\cap \dom(\overline{\delta\oplus \delta^\dagger})$ and thus $\overline{\delta}|_{A\<Y\>} \in \Der_{1\otimes 1}(B\subset A\<Y\>,\tau)$ is the preimage of $\delta$. The map is clearly bounded, and toward showing it has bounded inverse let $X=X^*\subset A$ be a finite subset satisfying $B\<X\>$. Since $Y$ is finite we have
    \[
        \| \overline{\delta}|_{A\<Y\>} \|_{X\cup Y}^2 = \| \delta\|_X^2 + \sum_{y\in Y} \|\overline{\delta}(y)\|_\ttop^2 \leq (1+|Y|c^2)\|\delta\|_X^2
    \]
for some $c>0$ independent of $\delta$. Thus the above map extends to a right $(\aaop)''$-linear homeomorphism of the closures, and hence $\sigma(B\subset A\<Y\>,\tau)=\sigma(B\subset A,\tau)$. 
\end{proof}


\bibliographystyle{amsalpha}
\bibliography{references}

\providecommand{\bysame}{\leavevmode\hbox to3em{\hrulefill}\thinspace}
\providecommand{\MR}{\relax\ifhmode\unskip\space\fi MR }
\providecommand{\MRhref}[2]{%
  \href{http://www.ams.org/mathscinet-getitem?mr=#1}{#2}
}
\providecommand{\href}[2]{#2}
\begin{thebibliography}{BCG03}

\bibitem[BCG03]{BCG03}
Philippe Biane, Mireille Capitaine, and Alice Guionnet, \emph{Large deviation
  bounds for matrix {B}rownian motion}, Inventiones Mathematicae \textbf{152}
  (2003), no.~2, 433--459.

\bibitem[BV18]{BV18}
Michael Brannan and Roland Vergnioux, \emph{Orthogonal free quantum group
  factors are strongly 1-bounded}, Adv. Math. \textbf{329} (2018), 133--156.
  \MR{3783410}

\bibitem[Cip08]{Cip08}
Fabio Cipriani, \emph{Dirichlet forms on noncommutative spaces}, Quantum
  potential theory, Lecture Notes in Math., vol. 1954, Springer, Berlin, 2008,
  pp.~161--276. \MR{2463708 (2010f:46112)}

\bibitem[CN21]{CN21}
Ian Charlesworth and Brent Nelson, \emph{Free {Stein} irregularity and
  dimension}, Journal of Operator Theory \textbf{85} (2021), no.~1, 101--103.

\bibitem[Con76]{Con76}
A.~Connes, \emph{Classification of injective factors. {C}ases {$II_{1},$}
  {$II_{\infty },$} {$III_{\lambda },$} {$\lambda \not=1$}}, Ann. of Math. (2)
  \textbf{104} (1976), no.~1, 73--115. \MR{454659}

\bibitem[CS05]{CS05}
Alain Connes and Dimitri Shlyakhtenko, \emph{{$L^2$}-homology for von {N}eumann
  algebras}, J. Reine Angew. Math. \textbf{586} (2005), 125--168. \MR{2180603
  (2007b:46104)}

\bibitem[DI16]{DI16}
Yoann Dabrowski and Adrian Ioana, \emph{Unbounded derivations, free dilations,
  and indecomposability results for {$\rm II_1$} factors}, Trans. Amer. Math.
  Soc. \textbf{368} (2016), no.~7, 4525--4560. \MR{3456153}

\bibitem[DL92]{DL92}
E.~Brian Davies and J.~Martin Lindsay, \emph{Noncommutative symmetric {M}arkov
  semigroups}, Math. Z. \textbf{210} (1992), no.~3, 379--411. \MR{1171180}

\bibitem[HN18]{HN18}
Michael Hartglass and Brent Nelson, \emph{Free transport for interpolated free
  group factors}, J. Funct. Anal. \textbf{274} (2018), no.~1, 222--251.
  \MR{3718052}

\bibitem[Jun03]{Jun03}
Kenley Jung, \emph{The free entropy dimension of hyperfinite von {Neumann}
  algebras}, Transactions of the American Mathematical Society \textbf{355}
  (2003), no.~12, 5053--5089.

\bibitem[L{\"u}c02]{Luc02}
Wolfgang L{\"u}ck, \emph{{$L^2$}-invariants: theory and applications to
  geometry and {$K$}-theory}, Ergebnisse der Mathematik und ihrer Grenzgebiete.
  3. Folge. A Series of Modern Surveys in Mathematics [Results in Mathematics
  and Related Areas. 3rd Series. A Series of Modern Surveys in Mathematics],
  vol.~44, Springer-Verlag, Berlin, 2002. \MR{1926649}

\bibitem[Pet09]{Pet09}
Jesse Peterson, \emph{{$L^2$}-rigidity in von {N}eumann algebras}, Invent.
  Math. \textbf{175} (2009), no.~2, 417--433. \MR{2470111 (2010b:46128)}

\bibitem[Pop86]{Pop86}
Sorin Popa, \emph{Correspondences}, INCREST Preprint, retrieved December 29,
  2020, \url{https://www.math.ucla.edu/~popa/popa-correspondences.pdf}, 1986.

\bibitem[Sau89]{Sau89}
J.-L. Sauvageot, \emph{Tangent bimodule and locality for dissipative operators
  on {$C^*$}-algebras}, Quantum probability and applications, {IV} ({R}ome,
  1987), Lecture Notes in Math., vol. 1396, Springer, Berlin, 1989,
  pp.~322--338. \MR{1019584}

\bibitem[Shl21]{Shl21}
Dimitri Shlyakhtenko, \emph{Von {N}eumann algebras of sofic groups with
  {$\beta^{(2)}_1=0$} are strongly 1-bounded}, J. Operator Theory \textbf{85}
  (2021), no.~1, 217--228. \MR{4198970}

\bibitem[Voi98]{VoiV}
Dan-Virgil Voiculescu, \emph{The analogues of entropy and of {F}isher's
  information measure in free probability theory. {V}. {N}oncommutative
  {H}ilbert transforms}, Invent. Math. \textbf{132} (1998), no.~1, 189--227.
  \MR{1618636 (99d:46087)}

\end{thebibliography}

\end{document}